\newcommand{\De}{\Delta}
\newcommand{\al}{\alpha}
\newcommand{\be}{\beta}
\newcommand{\ga}{\gamma}
\newcommand{\la}{\lambda}
\DeclareMathOperator{\PSL}{PSL} \DeclareMathOperator{\PGL}{PGL}
\DeclareMathOperator{\PSU}{PSU} \DeclareMathOperator{\SL}{SL}
\DeclareMathOperator{\GL}{GL} 
 \DeclareMathOperator{\Aut}{Aut}
\DeclareMathOperator{\tr}{tr}\DeclareMathOperator{\lcm}{lcm}
\newcommand{\Tr}{\mathcal{T}}
\newcommand{\Pri}{\mathcal{P}}
\newcommand{\Dq}{\mathcal{D}}
\newcommand{\Sq}{\mathcal{S}}
\newcommand{\Nq}{\mathcal{N}}
\newcommand{\cC}{\mathcal{C}}
\theoremstyle{plain}
\newtheorem{prop}{Proposition}[section]
\newtheorem{lemma}[prop]{Lemma}
\newtheorem{corr}[prop]{Corollary}
\newtheorem{thm}[prop]{Theorem}
\newtheorem{theorem}{Theorem}
\theoremstyle{definition}
\newtheorem{defn}[prop]{Definition}
\newtheorem{example}[prop]{Example}
\newtheorem{rem}[prop]{Remark}
\title{On Beauville Structures for $\PSL_2(q)$}
\author{Shelly Garion}
\thanks{The author was supported by a European Postdoctoral Fellowship (EPDI)
and by the SFB 878 ``Groups, Geometry and Actions''.}
\address{Max-Planck-Institute for Mathematics\\ D-53111 Bonn, Germany}
\curraddr{Fachbereich Mathematik und Informatik, Universit\"at
M\"unster, Einsteinstrasse 62, D-48149 M\"unster, Germany}
\email{shelly.garion@uni-muenster.de}
\subjclass[2000]{20D06, 20H10, 14J29, 30F99.}
\begin{document}

\maketitle

\begin{abstract}
We characterize Beauville surfaces of unmixed type with group either
$\PSL_2(p^e)$ or $\PGL_2(p^e)$, thus extending previous results of
Bauer, Catanese and Grunewald, Fuertes and Jones, and Penegini and
the author.
\end{abstract}

\section{Introduction}\label{sect.intro}


A \emph{Beauville surface} $S$ (over $\mathbb{C}$) is a particular
kind of surface isogenous to a \emph{higher product of curves},
i.e., $S=(C_1 \times C_2)/G$ is a quotient of a product of two
smooth curves $C_1$ and $C_2$ of genus at least two, modulo a free
action of a finite group $G$ which acts faithfully on each curve.
For Beauville surfaces the quotients $C_i/G$ are isomorphic to
$\mathbb{P}^1$ and both projections $C_i \rightarrow C_i/G \cong
\mathbb{P}^1$ are coverings branched over three points. Beauville
surfaces were introduced by Catanese in~\cite{Cat00}, inspired by a
construction of Beauville \cite{Be}.

A Beauville surface $S$ is either of \emph{mixed} or \emph{unmixed}
type according respectively as the action of $G$ exchanges the two
factors (and then $C_1$ and $C_2$ are isomorphic) or $G$ acts
diagonally on the product $C_1 \times C_2$. The subgroup $G_0$ (of
index $\leq 2$) of $G$ which preserves the ordered pair $(C_1,C_2)$
is then respectively of index $2$ or $1$ in $G$.

Any Beauville surface $S$ can be presented in such a way that the
subgroup $G_0$ of $G$ acts effectively on each of the factors $C_1$
and $C_2$. Catanese called such a presentation \emph{minimal} and
proved its uniqueness in \cite{Cat00}.

In this paper we shall consider only Beauville surfaces of unmixed
type so that $G_0 = G$. A natural question is to determine the
finite groups which characterize unmixed Beauville surfaces in a
minimal presentation. Since a finite group appears as the underlying
group of an unmixed Beauville surface in a minimal presentation if
and only if it admits an unmixed Beauville structure (see
~\cite{BCG05,BCG06}), the above question is equivalent to
determining the finite groups admitting an unmixed Beauville
structure.

\begin{defn}\label{defn.beau}
An \emph{unmixed Beauville structure} for a finite group $G$
consists of two triples $(a_1,b_1,c_1)$ and $(a_2,b_2,c_2)$ of
elements of $G$ which satisfy
\begin{enumerate}\renewcommand{\theenumi}{\it \roman{enumi}}
    \item $a_1b_1c_1=1$ and $a_2b_2c_2=1$,
    \item $\langle a_1,b_1 \rangle = G$ and $\langle a_2,b_2 \rangle=G$,
    \item $\Sigma(a_1,b_1,c_1) \cap \Sigma(a_2,b_2,c_2)= \{1\}$, \\
    where, for $i \in \{1,2\}$,
    $\Sigma(a_i,b_i,c_i)$ is the union of the conjugacy classes of
    all powers of $a_i$, all powers of $b_i$, and all powers of
    $c_i$.
\end{enumerate}
\end{defn}

Moreover denoting the order of an element $g$ in $G$ by $|g|$, we
define the \emph{type} $\tau_i$ of $(a_i,b_i,c_i)$ to be the triple
$(|a_i|,|b_i|,|c_i|)$. In this situation, we say that $G$ admits an
\emph{unmixed Beauville structure of type} $(\tau_1,\tau_2)$.

The question whether a finite group admits an unmixed Beauville
structure of a given type is closely related to the question whether
it is a quotient of certain triangle groups. More precisely, a
necessary condition for a finite group $G$ to admit an unmixed
Beauville structure of type $(\tau_1,\tau_2) =
\bigl((r_1,s_1,t_1),(r_2,s_2,t_2)\bigr)$ is that $G$ is a quotient
with torsion free-kernel of the triangle groups $T_{r_1,s_1,t_1}$
and $T_{r_2,s_2,t_2}$, where for $i\in \{1,2\}$,
\[
    T_{r_i,s_i,t_i} = \langle x,y,z: x^{r_i}=y^{s_i}=z^{t_i}=xyz=1 \rangle.
\]
Indeed, conditions \emph{(i)} and \emph{(ii)} of
Definition~\ref{defn.beau} are equivalent to the condition that $G$
is a quotient of each of the triangle groups
$T_{|a_i|,|b_i|,|c_i|}$, for $i\in \{1,2\}$, with torsion-free
kernel.

When investigating the existence of an unmixed Beauville structure
for a finite group, one can consider only types $(\tau_1,\tau_2)$,
where for $i\in \{1,2\}$, $\tau_i = (r_i,s_i,t_i)$ satisfies $1/r_i
+ 1/s_i + 1/t_i < 1$. Then $T_{r_i,s_i,t_i}$ is a (infinite
non-soluble) hyperbolic triangle group and we say that $\tau_i$ is
\emph{hyperbolic}.

Indeed, if $1/r_i + 1/s_i + 1/t_i > 1$ then $T_{r_i,s_i,t_i}$ is a
finite group, and moreover, it is either dihedral or isomorphic to
one of $A_4$, $A_5$ or $S_4$. By \cite[Proposition 3.6 and Lemma
3.7]{BCG05}, in these cases $G$ cannot admit an unmixed Beauville
structure. If $1/r_i + 1/s_i + 1/t_i = 1$ then $T_{r_i,s_i,t_i}$ is
one of the (soluble infinite) ``wall-paper'' groups, and by \cite[\S
6]{BCG05}, none of its finite quotients can admit an unmixed
Beauville structure.

A considerable effort has been made to classify the finite simple
groups which admit an unmixed Beauville structure. A finite abelian
simple group clearly does not admit an unmixed Beauville structure,
since by~\cite[Theorem 3.4]{BCG05} the only finite abelian groups
admitting an unmixed Beauville structure are the abelian groups of
the form $Z_n \times Z_n$ where $n$ is a positive integer coprime to
$6$. (Here $Z_n$ denotes a cyclic group of order $n$.) In
\cite{BCG05}, Bauer, Catanese and Grunewald provided the first
results on finite non-abelian simple groups admitting an unmixed
Beauville structure, and conjectured that all finite non-abelian
simple groups admit an unmixed Beauville structure with the
exception of $A_5$.

This conjecture has received much attention and has recently been
proved to hold. Concerning the simple alternating groups, it was
established in \cite{FG} that $A_5$ is indeed the only one not
admitting an unmixed Beauville structure. In \cite{FJ,GP}, the
conjecture is shown to hold for the projective special linear groups
$\PSL_2(q)$ (where $q > 5$), the Suzuki groups $^2B_2(q)$ and the
Ree groups $^2G_2(q)$ as well as some other families of finite
simple groups of Lie type of small rank (where $q$ is sufficiently
large). The next major result concerning the investigation of the
conjecture with respect to the finite simple groups of Lie type was
pursued by Garion, Larsen and Lubotzky who showed in \cite{GLL} that
the conjecture holds for finite non-abelian simple groups of
sufficiently large order. The final step regarding the investigation
of the conjecture was carried out by Guralnick and Malle \cite{GM}
and Fairbairn, Magaard and Parker \cite{FMP} who established its
veracity in general.

There has also been an effort to classify the finite quasisimple
groups and almost simple groups which admit an unmixed Beauville
structure. Recall that a finite group $G$ is \emph{quasisimple}
provided $G/Z(G)$ is a non-abelian simple group and $G = [G,G]$. In
\cite{FJ} it was shown that $\SL_2(q)$ (for $q>5$) admits an unmixed
Beauville structure. Fairbairn, Magaard and Parker \cite{FMP} showed
that with the exceptions of $\SL_2(5)$ and $\PSL_2(5) \cong \SL_2(4)
\cong A_5$, every finite quasisimple group admits an unmixed
Beauville structure. By \cite{BCG06,FG}, the almost simple symmetric
groups $S_n$ (where $n\geq 5$) admit an unmixed Beauville structure.
Recall that a group $G$ is called \emph{almost simple} if there is a
non-abelian simple group $G_0$ such that $G_0 \leq G \leq
\Aut(G_0)$.

Another conjecture of Bauer, Catanese and Grunewald proposed in
\cite{BCG05} states that if $\tau_1=(r_1,s_1,t_1)$ and
$\tau_2=(r_2,s_2,t_2)$ are two hyperbolic types, then almost all
alternating groups $A_n$ admit an unmixed Beauville structure of
type $(\tau_1,\tau_2)$. This has recently been proved in~\cite{GP},
based on results of Liebeck and Shalev~\cite{LS04}, where a similar
conjecture is raised, replacing $A_n$ by a finite simple classical
group of Lie type of sufficiently large Lie rank.

In contrast, when the Lie rank is very small, as in the case of
$\PSL_2(q)$, such a conjecture does not hold. It is therefore the
aim of this paper to characterize the possible types of an unmixed
Beauville structure for the projective special linear group
$\PSL_2(q)$. This is done in Theorem~\ref{thm.Beau.PSL2q}. A similar
result for the projective general linear group $\PGL_2(q)$ is
described in Theorem~\ref{thm.Beau.PGL2q}. In particular we show
that the almost simple group $\PGL_2(q)$ (where $q \geq 5$) admits
an unmixed Beauville structure.

\subsection*{Beauville structures for $\PSL_2(q)$ and $\PGL_2(q)$}
If $H = \PSL_2(q)$ (respectively, $\PGL_2(q)$) with $q\leq 5$
(respectively, $q\leq 4$) then $H$ is isomorphic to one of
$S_3,S_4,A_4$ or $A_5$. As none of these groups admits an unmixed
Beauville structure by \cite[Proposition 3.6]{BCG05}, we can assume
hereafter that $q\geq 7$ or $q\geq 5$ according respectively as $H =
\PSL_2(q)$ or $\PGL_2(q)$. Unless otherwise stated, we also let $G =
\PSL_2(q)$ and $G_1 = \PGL_2(q)$ where $q = p^e$ for some prime
number $p$ and some positive integer $e$.

Our first result is the characterization of the possible types of
unmixed Beauville structures for $\PSL_2(q)$.

\begin{theorem}\label{thm.Beau.PSL2q}
Let $G = \PSL_2(q)$ where $5 < q = p^e$ for some prime number $p$
and some positive integer $e$. Let $\tau_1 = (r_1,s_1,t_1)$, $\tau_2
= (r_2,s_2,t_2)$ be two hyperbolic triples of integers. Then $G$
admits an unmixed Beauville structure of type $(\tau_1,\tau_2)$ if
and only if the following hold:
\begin{enumerate}\renewcommand{\theenumi}{\it \roman{enumi}}
\item $G$ is a quotient of $T_{r_1,s_1,t_1}$ and $T_{r_2,s_2,t_2}$ with torsion-free
kernel. \\
Equivalently, $(e,\tau_1)$ and $(e,\tau_2)$ satisfy the conditions
given in Table~\ref{table.triple.PSL} in Section~\ref{sect.LR}.

\item If $p=2$ or $e$ is odd or $q=9$, then
$r_1s_1t_1$ is coprime to $r_2s_2t_2$. \\
If $p$ is odd, $e$ is even and $q>9$, then
$g=\gcd(r_1s_1t_1,r_2s_2t_2) \in \{1,p,p^2\}$. Moreover, if $p$
divides $g$ and $\tau_1$ (respectively $\tau_2)$ is up to a
permutation $(p,p,n)$ then $n \ne p$ and $n$ is a \emph{good
$G$-order} (see Definition~\ref{defn.good.G.order}).
\end{enumerate}
\end{theorem}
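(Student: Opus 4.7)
The plan is to prove necessity and sufficiency separately, reducing the entire analysis to the conjugacy structure of $G=\PSL_2(q)$. Condition (i) is necessary by the remarks following Definition~\ref{defn.beau}, and its reformulation via Table~\ref{table.triple.PSL} is the Macbeath-type classification of $(r,s,t)$-generating triples of $\PSL_2(q)$, which I would either prove along the way or cite.

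For the necessity of (ii) I would invoke the standard fact that every non-central element of $G$ of order coprime to $p$ lies in a unique maximal torus, either split of order $(q-1)/d$ or non-split of order $(q+1)/d$, with $d=\gcd(2,q-1)$, and that all tori of a given type are $G$-conjugate. A direct consequence is that if $(x,y,z)$ is any triple in $G$ and $\ell\neq p$ is a prime dividing some entry of $(|x|,|y|,|z|)$, then $\Sigma(x,y,z)$ already contains every element of $G$ of order $\ell$. So if such an $\ell$ divides $g=\gcd(r_1s_1t_1,r_2s_2t_2)$, then $\Sigma(a_1,b_1,c_1)\cap\Sigma(a_2,b_2,c_2)\neq\{1\}$, contradicting Definition~\ref{defn.beau}(iii). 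Hence $g$ is a power of $p$; since the Sylow $p$-subgroup of $G$ has exponent $p$, this forces $g\in\{1,p,p^2\}$. In the cases $p=2$, $e$ odd, or $q=9$, the conjugacy-class structure of the unipotent elements of $G$ causes $\Sigma$ to sweep out all non-trivial unipotents, so $p\mid g$ is again excluded and $g=1$. In the remaining case ($p$ odd, $e$ even, $q>9$), unipotents split into several $G$-classes; the exclusion $n\neq p$ in a triple of shape $(p,p,n)$ reflects the fact that a $(p,p,p)$-triple would spread $\Sigma$ across all unipotent classes, and the good-$G$-order condition on $n$ encodes exactly when the two unipotent entries of a generating $(p,p,n)$-triple can be placed in a prescribed $G$-class.

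For sufficiency, condition (i) supplies generating triples of the prescribed orders and the remaining work is to enforce Definition~\ref{defn.beau}(iii). If $g=1$, the orders in $\tau_1$ are coprime to those in $\tau_2$ and the non-identity parts of the two $\Sigma$'s cannot meet. If $p\mid g$, the good-$G$-order hypothesis allows us to choose the $(p,p,n)$-triples so that their unipotent entries lie in pre-assigned, disjoint $G$-classes; the semisimple parts then live in tori of orders coprime across the two triples and are disjoint automatically.

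The main obstacle I anticipate is precisely the unipotent analysis --- both showing that a non-good $n$ always forces an intersection of the $\Sigma$'s and constructing a generating $(p,p,n)$-triple whose unipotent pair lies in a prescribed class. I would tackle this by parametrising unipotent $G$-classes via square/non-square cosets in $\mathbb{F}_q^\times$ and by using a class-multiplication-coefficient count of $(p,p,n)$-triples refined by unipotent-class data, which should both characterise the ``good'' values of $n$ and complete the proof.
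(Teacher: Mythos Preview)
Your overall strategy is the paper's: reduce Definition~\ref{defn.beau}\textit{(iii)} to the pairwise conditions $\Sigma(A_i)\cap\Sigma(B_j)=\{1\}$, settle the semisimple case via the single conjugacy class of split (resp.\ non-split) tori, and handle unipotents through the two $G$-classes indexed by squares versus non-squares in $\mathbb{F}_q^\times$. The paper does the unipotent step by a direct trace computation (Proposition~\ref{prop.G0.unip.triple} and Corollary~\ref{corr.G.unip.triple}) together with an explicit $\PGL$-conjugation swapping the two classes (Remark~\ref{rem.sigma.unip}), rather than class-multiplication coefficients; either route works.

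Two points in your necessity argument are genuinely incomplete. First, ``the Sylow $p$-subgroup has exponent $p$, so $g\in\{1,p,p^2\}$'' does not exclude $g=p^3$: nothing in that sentence prevents both triples from being $(p,p,p)$. The paper rules this out via condition~\textit{(i)}: by Table~\ref{table.triple.PSL} a $(p,p,p)$-generating triple of $\PSL_2(p^e)$ exists only for $e=1$, so it cannot arise when $e$ is even. Your alternative claim that a $(p,p,p)$-triple ``spreads $\Sigma$ across all unipotent classes'' is neither proved nor needed.

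Second, your treatment of $q=9$ is wrong. You group $q=9$ with $p=2$ and $e$ odd, asserting that $\Sigma$ of a single unipotent sweeps out all unipotents. But $q=9$ has $e=2$ even, and by Lemma~\ref{lem.unip.i} powers of a unipotent stay in its own $G$-class. The reason $g=1$ is forced at $q=9$ is subtler (Remark~\ref{rem.9.335}): the only candidate good $G$-order is $5$, but every $(3,3,5)$-triple in which the two unipotents are $G$-conjugate generates $A_5$ rather than $G$, while $4$ is not a good $G$-order. Hence no generating $(3,3,n)$-triple with $G$-conjugate unipotents exists, and the $e$-even necessity argument then forces $g=1$. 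You will need to treat $q=9$ by hand.
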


\begin{defn}\label{defn.good.G.order}
Let $q$ be an odd prime power, let $G=\PSL_2(q)$ and let $n>1$ be an
integer. Then $n$ is called a \emph{good $G$-order} if one of the
following holds:
\begin{itemize}
\item $n$ is odd and divides either $q-1$ or $q+1$;
\item $n$ is even and $4n$ divides either $q-1$ or $q+1$.
\end{itemize}
\end{defn}

We deduce from Theorem~\ref{thm.Beau.PSL2q} that for any $q>7$ the
group $\PSL_2(q)$ admits unmixed Beauville structures of types
\[
    \left(\bigl(\frac{q-1}{d},\frac{q-1}{d},\frac{q-1}{d}\bigr),
    \bigl(\frac{q+1}{d},\frac{q+1}{d},\frac{q+1}{d}\bigr)\right),
\]
and
\[
    \left(\bigl(\frac{q-1}{d},\frac{q-1}{d},\frac{q-1}{d}\bigr),
    \bigl(\frac{q+1}{d},\frac{q+1}{d},p\bigr)\right),
\]
where $d=\gcd(2,q-1)$, thus recovering the results appearing
in~\cite{GP} and~\cite{FJ} respectively (the case $q=7$ is excluded
since the triple $(3,3,3)$ is not hyperbolic).
In addition, if $q \geq 7$ and $q \neq 9$ then $\PSL_2(q)$
admits an unmixed Beauville structure of type
\[
    \left(\bigl(\frac{q-1}{d},\frac{q-1}{d},p\bigr),
    \bigl(\frac{q+1}{d},\frac{q+1}{d},\frac{q+1}{d}\bigr)\right),
\]
(the case $q=9$ is excluded since $\PSL_2(9)$ is not a quotient of
$T_{3,4,4}$, see Table~\ref{table.Ex.triples.5.7.8.9} in
Section~\ref{sect.LR}).

When $q\geq 7$ is odd, $\PSL_2(q)$ admits an unmixed Beauville
structure of type
\[
    \left(\bigl(p,p,\frac{q-1}{2}\bigr), \bigl(\frac{q+1}{2},\frac{q+1}{2},\frac{q+1}{2}\bigr)\right),
\]
and when $q>7$, it also admits an unmixed Beauville structure of
type
\[
    \left(\bigl(p,p,\frac{q+1}{2}\bigr), \bigl(\frac{q-1}{2},\frac{q-1}{2},\frac{q-1}{2}\bigr)\right),
\]
(again, the case $q=7$ is excluded since the triple $(3,3,3)$ is not hyperbolic).

If $p \geq 7$ is prime then $\PSL_2(p)$ admits unmixed Beauville
structures of types
\[
    \left(\bigl(p,p,p\bigr), \bigl(\frac{p-1}{2},\frac{p-1}{2},\frac{p+1}{2}\bigr)\right),
    \left(\bigl(p,p,p\bigr),\bigl(\frac{p-1}{2},\frac{p+1}{2},\frac{p+1}{2}\bigr)\right),
    \left(\bigl(p,p,p\bigr),\bigl(\frac{p+1}{2},\frac{p+1}{2},\frac{p+1}{2}\bigr)\right),
\]
and if $p>7$ is prime, $\PSL_2(p)$ also admits an unmixed Beauville structure of type
\[
    \left(\bigl(p,p,p\bigr), \bigl(\frac{p-1}{2},\frac{p-1}{2},\frac{p-1}{2}\bigr)\right).
\]
Note that $\PSL_2(p^e)$ is a quotient of $T_{p,p,p}$ if and only if $e=1$,
by Table~\ref{table.triple.PSL} in Section~\ref{sect.LR}.

\begin{example} We list below \emph{all} the possible types for Beauville
structures for $\PSL_2(q)$ where $q=7,8,9$. This is a direct consequence of
Theorem~\ref{thm.Beau.PSL2q}, and in particular it follows from Table~\ref{table.Ex.triples.5.7.8.9}
in Section~\ref{sect.LR}. This table describes all the hyperbolic triples satisfying condition~{\it(i)},
and one can construct all possible pairs of these triples and check whether they
are coprime, thus also satisfying condition~{\it(ii)}.
It can easily be verified by a computer, for example using \textsc{Magma}.
\begin{itemize}
\item $\PSL_2(7)$:
\[
    \bigl((3,7,7),(4,4,4)\bigr), \bigl((3, 4, 4),(7, 7, 7)\bigr),
    \bigl((3, 3, 7),(4, 4, 4)\bigr),
\]
\[
    \bigl((4, 4, 4),(7, 7, 7)\bigr), \bigl((3, 3, 4),(7, 7, 7)\bigr).
\]
(see also~\cite[Theorem 13]{FGJ}).
\item $\PSL_2(8)$:
\[
    \bigl((2, 7, 7),(9, 9, 9)\bigr), \bigl((2, 9, 9 ),(7, 7, 7)\bigr),
    \bigl((7, 7, 7 ),(9, 9, 9)\bigr), \bigl((2, 3, 9),(7, 7, 7)\bigr),
\]
\[
    \bigl((3, 9, 9),(7, 7, 7)\bigr), \bigl((3, 3, 9),(7, 7, 7)\bigr),
    \bigl((2, 7, 7),(3, 9, 9)\bigr), \bigl((2, 7, 7),(3, 3, 9)\bigr).
\]
\item $\PSL_2(9) \cong A_6$:
\[
    \bigl((3, 5, 5),(4, 4, 4)\bigr), \bigl((4, 4, 4),(5, 5, 5)\bigr),
    \bigl((3, 3, 5),(4, 4, 4)\bigr), \bigl((3, 3, 4),(5, 5, 5)\bigr).
\]
\end{itemize}
\end{example}

Observe that condition~\textit{(iii)} of Definition~\ref{defn.beau}
is clearly satisfied under the assumption that $r_1 s_1 t_1$ is
coprime to $r_2 s_2 t_2$. The example of the alternating groups
shows that this assumption is not always necessary. But in the case
of $\PSL_2(q)$ Theorem~\ref{thm.Beau.PSL2q} shows that this
assumption is actually not far from being necessary.

However, by Theorem~\ref{thm.Beau.PSL2q}, for any odd prime power
$q>3$ the group $\PSL_2(q^2)$ admits an unmixed
Beauville structure of type
\[
    \left(p,\frac{q^2-1}{2},\frac{q^2-1}{2}\bigr),
    \bigl(p,\frac{q^2+1}{2},\frac{q^2+1}{2}\bigr)\right).
\]
In particular, $\PSL_2(q^2)$ is a quotient of the hyperbolic
triangle groups $T_{p, (q^2 \pm 1)/2, (q^2 \pm 1)/2}$ by
Table~\ref{table.triple.PSL} in Section~\ref{sect.LR} and
Lemma~\ref{lem.irr}, since $\PSL_2(q^2)$ contains elements of orders
$(q^2 \pm 1)/2$, but none of its subfield subgroups contain such
elements. In addition, $\PSL_2(q^2)$ also admits unmixed Beauville
structures of types
$$\bigl((p,p,t_1),(p,p,t_2)\bigr)$$ for certain $t_1,t_2$ dividing
$(q^2-1)/2$, $(q^2+1)/2$ respectively (see Lemma~\ref{lem.ppt}).

\begin{example}
We list below \emph{all} the possible types of the form
$\bigl((p,p,t_1), (p,p,t_2)\bigr)$ for Beauville structures for
$\PSL_2(q^2)$ where $q=5,7,11,13$. This is a direct consequence of
Theorem~\ref{thm.Beau.PSL2q}, and in particular it follows from Table~\ref{table.good.G.orders}
in Section~\ref{sect.unip}.  This table describes all the \emph{good $G$-orders},
and so one needs only to check when they are pairwise coprime, thus satisfying condition~{\it(ii)}.
It can easily be verified by a computer, for example using \textsc{Magma}.
\begin{itemize}
\item $\PSL_2(25)$:
$$\bigl((5, 5, 6),(5, 5, 13)\bigr).$$
\item $\PSL_2(49)$:
\[
    \bigl((7, 7, 5),(7, 7, 6)\bigr), \bigl((7, 7, 5),(7, 7, 12)\bigr),
    \bigl((7, 7, 25),(7, 7, 6)\bigr), \bigl((7, 7, 25),(7, 7,
    12)\bigr).
\]
\item $\PSL_2(121)$:
\[
    \bigl((11,11,10),(11,11,61)\bigr), \bigl((11,11,15),(11,11,61)\bigr),
    \bigl((11,11,30),(11,11,61)\bigr).
\]
\item $\PSL_2(169)$:
\[
    \bigl((13,13,5),(13,13,14)\bigr), \bigl((13,13,5),(13,13,17)\bigr),
    \bigl((13,13,5),(13,13,21)\bigr),
\]
\[
    \bigl((13,13,5),(13,13,42)\bigr), \bigl((13,13,14),(13,13,17)\bigr),
    \bigl((13,13,14),(13,13,85)\bigr),
\]
\[
    \bigl((13,13,17),(13,13,21)\bigr), \bigl((13,13,17),(13,13,42)\bigr),
    \bigl((13,13,21),(13,13,85)\bigr),
\]
\[
    \bigl((13,13,42),(13,13,85)\bigr).
\]
\end{itemize}
\end{example}

Our next result characterizes the possible unmixed Beauville
structures for $\PGL_2(q)$.

\begin{theorem}\label{thm.Beau.PGL2q}
Let $G_1 = \PGL_2(q)$ where $3 < q = p^e$ for some odd prime number
$p$ and some positive integer $e$. Let $\tau_1 = (r_1,s_1,t_1)$,
$\tau_2 = (r_2,s_2,t_2)$ be two hyperbolic triples of integers. Then
$G_1$ admits a Beauville structure of type $(\tau_1,\tau_2)$ if and
only if the following hold:

\begin{enumerate}\renewcommand{\theenumi}{\it \roman{enumi}}
\item $G_1$ is a quotient of $T_{r_1,s_1,t_1}$ and $T_{r_2,s_2,t_2}$ with torsion-free
kernel. \\
Equivalently, $(e,\tau_1)$ and $(e,\tau_2)$ satisfy the conditions
given in Table~\ref{table.triple.PGL} in Section~\ref{sect.LR}.

\item Each of the integers
$$\gcd(r_1,r_2),\ \gcd(r_1,s_2),\ \gcd(r_1,t_2),$$
$$\gcd(s_1,r_2),\ \gcd(s_1,s_2),\ \gcd(s_1,t_2),$$
$$\gcd(t_1,r_2),\ \gcd(t_1,s_2),\ \gcd(t_1,t_2),$$
is equal to $1$ or $2$.

\item All even elements in one of the triples divide $q-1$, while all even elements
in the other triple divide $q+1$.

\item The integer $2$ appears only in a \emph{good involuting triple w.r.t $q$} (see
Definition~\ref{def.2.triple}).
\end{enumerate}
\end{theorem}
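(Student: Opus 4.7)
The plan is to establish necessity and sufficiency of conditions (i)--(iv) separately, using the classification of cyclic subgroups and conjugacy classes in $\PGL_2(q)$. The key structural input is that every nontrivial cyclic subgroup of $\PGL_2(q)$ lies in a $p$-Sylow subgroup (yielding elements of order $p$), a split maximal torus (cyclic of order $q-1$), or a non-split maximal torus (cyclic of order $q+1$); for each divisor $n > 2$ of $q \pm 1$, the cyclic subgroups of order $n$ inside a fixed torus class form a single conjugacy class; and since $q$ is odd, $\PGL_2(q)$ has exactly two conjugacy classes of involutions, one in each torus class. In particular, any element of even order $2m$ has its involution $g^m$ lying in the split class iff $2m \mid q-1$ and in the non-split class iff $2m \mid q+1$.

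For the necessity direction I would take an unmixed Beauville structure $(a_i,b_i,c_i)_{i=1,2}$ of type $(\tau_1,\tau_2)$ and deduce each condition in turn. Condition (i) is an immediate rephrasing of parts (i)--(ii) of Definition~\ref{defn.beau} via the universal property of triangle groups. For (ii), any common divisor $n > 2$ of orders drawn from the two triples would force the corresponding cyclic subgroups of order $n$ into a single $\PGL_2(q)$-conjugacy class, producing a nontrivial element of $\Sigma(a_1,b_1,c_1) \cap \Sigma(a_2,b_2,c_2)$ and violating Definition~\ref{defn.beau}(iii). The involution analysis above then gives (iii), since otherwise the even-order elements of the two triples would share an involution up to conjugation. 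Condition (iv) is the refined version needed when the order $2$ itself occurs: the class of the involution is no longer determined by divisibility alone and must be prescribed, which is precisely what Definition~\ref{def.2.triple} packages.

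For the sufficiency direction, given $(\tau_1,\tau_2)$ satisfying (i)--(iv), I would construct generating triples by choosing elements in the conjugacy classes dictated by (iii) and (iv). Existence of a triangle generator of type $\tau_i$ is guaranteed by (i) via the Macbeath-type analysis summarised in Table~\ref{table.triple.PGL} of Section~\ref{sect.LR}; refining this to a generator with a prescribed conjugacy-class pattern should follow from a Frobenius character-sum count on solutions to $abc=1$ in prescribed classes, in the spirit of arguments in~\cite{FJ,GP}. Once such triples are in hand, the $\Sigma$-intersection condition reduces, via the cyclic subgroup analysis, to checking that no nontrivial power of an element of the first triple is conjugate to a power of an element of the second: condition (ii) handles the semisimple part of odd order (and the unipotent case when $p$ is one of the orders), while conditions (iii) and (iv) handle the even-order and involution cases respectively.

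The main obstacle I anticipate is the sufficiency of (iv), where an involution carries more information than its order --- namely its conjugacy class --- and one must produce a triangle generator whose involution lies in the prescribed class rather than merely one of the correct order. This requires a class-refined generation result, which I would expect to follow from the framework of Section~\ref{sect.inv} (where the notion of a good involuting triple is defined precisely to package this extra datum), combined with the dihedral normalizers of the split and non-split tori, whose structure allows explicit construction of $(2,s,t)$-generators inside a prescribed torus class. The remaining cases of the sufficiency argument should reduce to standard triangle-generation lemmas analogous to Lemma~\ref{lem.ppt} together with the pairwise coprimality of orders supplied by (ii).
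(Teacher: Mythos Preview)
Your necessity argument tracks the paper's closely: both reduce condition (iii) of Definition~\ref{defn.beau} to pairwise statements $\Sigma(A_i)\cap\Sigma(B_j)=\{1\}$, then invoke the torus/unipotent trichotomy (packaged in the paper as Lemma~\ref{lem.PGL.sigma}) to force the gcd, parity, and involution-class constraints. No issue there.

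For sufficiency, your route diverges from the paper's, and you are making it harder than necessary. You anticipate needing a \emph{class-refined generation result}---i.e.\ producing a generating triple whose involution lies in a prescribed conjugacy class---and propose to obtain it via a Frobenius character-sum count. The paper avoids this entirely. The point of Corollary~\ref{cor.2.triple} (the ``if and only if'' in Section~\ref{sect.inv}) is that for an irregular $\PSL_2(q^2)$-triple of type $(r,s,2)$, the conjugacy class of the involution is \emph{determined} by the case distinction $(\beta)$/$(\gamma)$ of Lemma~\ref{lem.irr}, and ``good involuting triple'' is exactly the condition that this forced class agrees with the class of the other even-order elements. So one may take \emph{any} generating triple supplied by condition~(i), reorder so that two elements lie in $G_1\setminus G$, and then condition~(iv) plus Corollary~\ref{cor.2.triple} guarantee the involution sits in the correct (split or non-split) class automatically. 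No character computation and no bespoke construction are required; Lemma~\ref{lem.PGL.sigma} then closes the $\Sigma$-intersection check case by case.

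In short: your plan would work, but the obstacle you flag is not an obstacle. The definition of good involuting triple is not merely ``packaging extra data'' to be imposed during construction---it is the numerical criterion under which the involution's class is already forced by irregularity, so any generating triple of that type will do.
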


\begin{defn}\label{def.2.triple}
Let $q$ be an odd prime power. A hyperbolic triple of integers
$(r,s,2)$ is called a \emph{good involuting triple w.r.t $q$} if one
of the following holds:
\begin{itemize}
\item $q \equiv 1 \bmod 4$, and $r,s$ both divide $q-1$ but not
$(q-1)/2$;
\item $q \equiv 3 \bmod 4$, and $r,s$ both divide $q+1$ but not
$(q+1)/2$;
\item $q \equiv 1 \bmod 4$, $r$ divides $q+1$ but not $(q+1)/2$, and
$s$ is odd;
\item $q \equiv 1 \bmod 4$, $s$ divides $q+1$ but not $(q+1)/2$, and
$r$ is odd;
\item $q \equiv 3 \bmod 4$, $r$ divides $q-1$ but not $(q-1)/2$, and
$s$ is odd;
\item $q \equiv 3 \bmod 4$, $s$ divides $q-1$ but not $(q-1)/2$, and
$r$ is odd.
\end{itemize}

\end{defn}

We deduce from Theorem~\ref{thm.Beau.PGL2q} that for any odd prime
power $q\geq 5$ the group $\PGL_2(q)$ admits an unmixed Beauville
structure of type
\[
    \bigl((p,q-1,q-1),((q+1)/2,q+1,q+1)\bigr),
\]
and if $q\geq 7$ it also admits unmixed Beauville structures of
types
\[
    \bigl(((q-1)/2,q-1,q-1),(p,q+1,q+1)\bigr),
\]
and
\[
    \bigl(((q-1)/2,q-1,q-1),((q+1)/2,q+1,q+1)\bigr),
\]
(the case $q=5$ is excluded since the triple $(2,4,4)$ is not hyperbolic).

In addition, if $9 \leq q \equiv 1 \bmod 4$ then $\PGL_2(q)$ admits
unmixed Beauville structures of types
\[
    \bigl((2,p,q+1),(2,q-1,q-1)\bigr),\
    \bigl((2,p,q+1),((q-1)/2,q-1,q-1)\bigr),
\]
\[
    \bigl((2,q-1,q-1),(p,q+1,q+1)\bigr),\
    \bigl((2,q-1,q-1),((q+1)/2,q+1,q+1)\bigr),
\]
\[
    \bigl((2,(q+1)/2,q+1),(2,q-1,q-1)\bigr),\
    \bigl((2,(q+1)/2,q+1),((q-1)/2,q-1,q-1)\bigr),
\]
\[
    \bigl((2,(q+1)/2,q+1),(p,q-1,q-1)\bigr),
\]
whereas if $7 \leq q \equiv 3 \bmod 4$ then $\PGL_2(q)$ admits
unmixed Beauville structures of types
\[
    \bigl((2,p,q-1),(2,q+1,q+1)\bigr),\
    \bigl((2,p,q-1),((q+1)/2,q+1,q+1)\bigr), \quad
\]
\[
    \bigl((2,q+1,q+1),(p,q-1,q-1)\bigr),\
    \bigl((2,q+1,q+1),((q-1)/2,q-1,q-1)\bigr),
\]
and if moreover $q \geq 11$ then $\PGL_2(q)$ also admits Beauville
structures of types
\[
    \bigl((2,(q-1)/2,q-1),(2,q+1,q+1)\bigr),\
    \bigl((2,(q-1)/2,q-1),((q+1)/2,q+1,q+1)\bigr),
\]
\[
    \bigl((2,(q-1)/2,q-1),(p,q+1,q+1)\bigr).
\]
These results follow from Definition~\ref{def.2.triple} and
Table~\ref{table.good.inv.triples} in Section~\ref{sect.inv}.

\begin{example}
We list below \emph{all} the possible types for Beauville structures
for $\PGL_2(q)$ where $q=5,7,9$. This is a direct consequence of
Theorem~\ref{thm.Beau.PGL2q}, and in particular it follows from Table~\ref{table.Ex.triples.5.7.8.9}
in Section~\ref{sect.LR}.  This table describes all the hyperbolic triples satisfying condition~{\it(i)},
and one can construct all possible pairs of these triples and check whether they also satisfy
conditions~{\it(ii), (iii)} and~{\it(iv)}.
It can easily be verified by a computer, for example using \textsc{Magma}.
\begin{itemize}
\item $\PGL_2(5)$:
$$\bigl((3, 6, 6),(4, 4, 5)\bigr).$$
\item $\PGL_2(7)$:
\[
    \bigl((2, 6, 7),(2, 8, 8)\bigr), \bigl((2, 6, 7),(4, 8, 8)\bigr),
    \bigl((2, 8, 8),(6, 6, 7)\bigr), \bigl((4, 8, 8),(6, 6, 7)\bigr),
\]
\[
    \bigl((3, 6, 6),(4, 8, 8)\bigr), \bigl((2, 8, 8),(3, 6, 6)\bigr), \bigl((3, 6, 6),(7, 8, 8)\bigr).
\]
\item $\PGL_2(9)$:
\[
    \bigl((4, 8, 8),(5, 10, 10)\bigr), \bigl((2, 5, 10),(3, 8, 8)\bigr),
    \bigl((2, 8, 8 ),(5, 10, 10)\bigr), \bigl((2, 3, 10),(2, 8, 8)\bigr),
\]
\[
    \bigl((2, 5, 10),(2, 8, 8)\bigr), \bigl((2, 5, 10),(4, 8, 8)\bigr), \bigl((3, 10, 10),(4, 8, 8)\bigr),
\]
\[
    \bigl((3, 8, 8),(5, 10, 10)\bigr), \bigl((2, 8, 8),(3, 10, 10)\bigr), \bigl((2, 3, 10),(4, 8, 8)\bigr).
\]
\end{itemize}
\end{example}
\begin{example}
We list below \emph{all} the possible types of the form
$\bigl((2,r_1,s_1), (2,r_2,s_2)\bigr)$ for Beauville structures for
$\PGL_2(q)$ where $q=11,13$. This is a direct consequence of
Theorem~\ref{thm.Beau.PGL2q}, and in particular it follows from
Table~\ref{table.Ex.triples.11.13} in Section~\ref{sect.LR}, in the
same way as the previous example.
\begin{itemize}
\item $\PGL_2(11)$:
\[
    \bigl((2, 4, 12),(2, 5, 10)\bigr), \bigl((2, 4, 12),(2, 10, 11)\bigr),
    \bigl((2, 10, 11),(2, 12, 12)\bigr), \bigl((2, 5, 10),(2, 12,
    12)\bigr).
\]
\item $\PGL_2(13)$:
\[
    \bigl((2, 7, 14),(2, 12, 12)\bigr), \bigl((2, 4, 12),(2, 13, 14)\bigr),
    \bigl((2, 12, 12),(2, 13, 14)\bigr), \bigl((2, 4, 12),(2, 7,
    14)\bigr).
\]
\end{itemize}
\end{example}

\subsection*{Organization}
This paper is organized as follows. In Section~\ref{sect.pre} we
present some of the basic properties of the groups $\PSL_2(q)$ and
$\PGL_2(q)$ that are needed later. In Section~\ref{sect.surj} we
describe the results of~\cite{LR1,LR2} characterizing, for a given
$q$, the hyperbolic triangle groups which have $\PSL_2(q)$
(respectively, $\PGL_2(q)$) as quotients with torsion-free kernel,
and discuss the notions of a \emph{good $G$-order} and a \emph{good
involuting triple w.r.t $q$}. The proofs of
Theorems~\ref{thm.Beau.PSL2q} and~\ref{thm.Beau.PGL2q} are presented
in Section~\ref{sect.beau}.

\subsubsection*{Acknowledgement.}
I am thankful to Fritz Grunewald for introducing me to the
fascinating world of Beauville structures and for giving me
inspiration and motivation. He is deeply missed.

I am grateful to Alexandre Zalesski for his enlightening comments. I
am thankful to Ingrid Bauer and Fabrizio Catanese for many useful
discussions, to Matteo Penegini for many interesting discussions and
for his remarks on this manuscript, to Tatiana Bandman for patiently
answering my questions, and to Claude Marion for kindly providing me
with his recent preprints.

I am grateful to the referee for his careful reading of
this manuscript and for his valuable comments.

\section{Preliminaries}\label{sect.pre}
In this section we shall describe some well-known properties of the
groups $\PSL_2(q)$ and $\PGL_2(q)$, their elements and their
subgroups (see for example \cite{Di}, \cite[\S2.8]{Go} and
\cite[\S6]{Su}), that will be used later on.

\subsection{The groups $\PSL_2(q)$ and $\PGL_2(q)$}\label{sect.groups}

We let $\mathbb{F}_q$ denote a finite field of $q$ elements where
$q=p^e$ for some prime number $p$ and some positive integer $e$.
Recall that $\GL_2(q)$ is the group of invertible $2 \times 2$
matrices over $\mathbb{F}_q$, and $\SL_2(q)$ is the subgroup of
$\GL_2(q)$ comprising the matrices with determinant $1$. Then
$\PGL_2(q)$ and $\PSL_2(q)$ are the quotients of $\GL_2(q)$ and
$\SL_2(q)$ by their respective centers. In addition, $\PSL_2(q)$ is
simple for $q \neq 2,3$. We shall denote by $G, G_0, G_1$ the groups
$\PSL_2(q)$, $\SL_2(q)$ and $\PGL_2(q)$ respectively.

Also recall that $G$ can be viewed as a normal subgroup of $G_1$
whose index is $2$ if $p$ is odd, otherwise $G$ can be identified
with $G_1$. Let $d=\gcd(2,q-1)$. Then the orders of $G_0$, $G_1$ and
$G$ are $q(q-1)(q+1)$, $q(q-1)(q+1)$ and $q(q-1)(q+1)/d$
respectively.

Let $\mathbb{P}_1(q)$ denote the projective line over
$\mathbb{F}_q$. Then $G_1$ acts on $\mathbb{P}_1(q)$ by
\[
    \begin{pmatrix}
    a & b \\ c&d
    \end{pmatrix}: \quad
    z \mapsto \frac{az+b}{cz+d}
\]
hence, it can be identified with the group of projective
transformations on $\mathbb{P}_1(q)$. Under this identification, $G$
is the set of all transformations for which $ad-bc$ is a square in
$\mathbb{F}_q$.

\subsection{Group elements}\label{sect.elements}
One can classify the elements of $G$ and $G_1$ according to their
action on $\mathbb{P}_1(q)$. This is the same as considering the
possible Jordan forms of their pre-images. For a matrix $A \in G_0$
we will denote by $\bar{A}$ its image in $G$.

Table~\ref{table.gp.elm} lists the three types of elements according
to whether they have $0,1$ or $2$ fixed points in $\mathbb{P}_1(q)$.

\begin{table}[h]
\begin{tabular} {|c|c|c|c|}
\hline
type & action on $\mathbb{P}_1(q)$ & order in $\PGL_2(q)$ & order in $\PSL_2(q)$  \\
\hline

unipotent & fixes one point & $p$ & $p$ \\

split & fixes two points & divides $q-1$ & divides $(q-1)/d$  \\

non-split & no fixed points & divides $q+1$ & divides $(q+1)/d$ \\
\hline
\end{tabular}
\caption{Elements in $\PGL_2(q)$ and $\PSL_2(q)$}
\label{table.gp.elm}
\end{table}

Table~\ref{table.elm.PSL} describes the Jordan forms of the three
types of elements in $G$, according to whether the characteristic
polynomial $P(\la):=\la^2 - \al \la +1$ of the pre-image $A \in G_0$
(where $\al$ is the trace of $A$) has $0$, $1$ or $2$ distinct roots
in $\mathbb{F}_q$.

\begin{table}[h]
\begin{tabular} {|c|c|c|c|}
\hline
type & roots of $P(\la)$ & Jordan form in $\SL_2(\overline{\mathbb{F}}_p)$ & conjugacy classes  \\
\hline \hline

unipotent & $1$ root & $\begin{pmatrix} \pm 1 & 1 \\ 0 & \pm 1 \end{pmatrix}$ & $d$ classes in $G$ \\
 & & $\al=\pm 2$ & which unite in $G_1$ \\
\hline

split & $2$ roots & $\begin{pmatrix} a & 0 \\ 0 & a^{-1}\end{pmatrix}$ & one class in $G$ \\
      &           & where $a \in \mathbb{F}_q^*$ & for each $\al$ \\
      &           & and $a+a^{-1}=\al$ &  \\
\hline

non-split & no roots & $\begin{pmatrix} a & 0 \\ 0 & a^q\end{pmatrix}$ & one class in $G$ \\
          &          & where $a \in \mathbb{F}_{q^2}^* \setminus \mathbb{F}_q^*$ & for each $\al$\\
          &          & $a^{q+1}=1$ and $a + a^q = \al$ &  \\
\hline
\end{tabular}
\caption{Elements in $\PSL_2(q)$ and their Jordan forms}
\label{table.elm.PSL}
\end{table}

\subsection{Subgroups of $\PSL_2(q)$}\label{sect.subgroups}
Table~\ref{table.subgroups} specifies all the subgroups of
$G=\PSL_2(q)$ up to isomorphism following~\cite[Theorems 6.25 and 6.26]{Su}.

\begin{table}[h]
\begin{tabular} {|c|c|c|}
\hline
type &  maximal order & conditions \\
\hline \hline

$p$-group & $q$ & -- \\
\hline

Frobenius (Borel) & $q(q-1)/d$ & -- \\
\hline

cyclic (split) & $(q-1)/d$ & -- \\
\hline

dihedral (split) & $2(q-1)/d$ & -- \\
\hline

cyclic (non-split) & $(q+1)/d$ & -- \\
\hline

dihedral (non-split) & $2(q+1)/d$ & -- \\
\hline

$\PSL_2(q_1)$ & -- & $q=q_1^m$ $(m \in \mathbb{N})$\\
\hline

$\PGL_2(q_1)$ & -- & $q$ is odd, $q=q_1^{2m}$ $(m \in \mathbb{N})$ \\
\hline

$A_4$ & 12 &  $q$ is odd; or $q=2^e$, $e$ even \\
\hline

$S_4$ & 24 & $q^2 \equiv 1 \bmod {16}$ \\
\hline

$A_5$ & 60 & $p=5$ or $q^2 \equiv 1 \bmod {5}$ \\
\hline
\end{tabular}
\caption{Subgroups of $\PSL_2(q)$} \label{table.subgroups}
\end{table}

These subgroups can be divided into the following three classes,
following Macbeath~\cite{Mac}. The subgroups isomorphic to
$\PSL_2(q_1)$ or $\PGL_2(q_1)$ are usually called \emph{subfield}
subgroups (since $\mathbb{F}_{q_1}$ is a subfield of
$\mathbb{F}_q$). Since $A_4$, $S_4$, $A_5$ and dihedral groups
correspond to the finite triangle groups, that is, triangle groups
$T_{r,s,t}$ such that $1/r+1/s+1/t>1$, we will call them
\emph{small} subgroups. For convenience we will refer to the other
subgroups, namely subgroups of the Borel and cyclic subgroups, as
\emph{structural} subgroups.

Regarding the conjugacy classes of these subgroups, we recall that
there is a single conjugacy class in $G$ of dihedral subgroups of
order $2(q-1)/d$ (respectively, $2(q+1)/d$), so that there is a
single conjugacy class in $G$ of cyclic subgroups of order $(q-1)/d$
(respectively, $(q+1)/d$).

We also recall that for any divisor $f$ of $e$, $G$ has a
$G_1$-conjugacy class of subgroups isomorphic to $\PSL_2(p^f)$.
Moreover, if $p$ is odd and $e$ is even then $G$ has a
$G_1$-conjugacy class of subgroups isomorphic to $\PGL_2(p^{f})$ for
any $f$ dividing $e/2$.

\section{Hyperbolic triangle groups and $\PSL_2(q),\PGL_2(q)$}\label{sect.surj}

In order to characterize the possible types of an unmixed Beauville
structure for $\PSL_2(q)$ (respectively, $\PGL_2(q)$) it is crucial
to know given $q$ the hyperbolic triangle groups which have
$\PSL_2(q)$ (respectively, $\PGL_2(q)$) as quotients with
torsion-free kernel.

Given a prime power $q$ the hyperbolic triangle groups which have
$\PSL_2(q)$ (respectively, $\PGL_2(q)$) as quotients with
torsion-free kernel have been determined by Langer and
Rosenberger~\cite{LR1} and Levin and Rosenberger~\cite{LR2},
following Macbeath~\cite{Mac}. It follows that if $(r,s,t)$ is
hyperbolic, then for almost all primes $p$, there is precisely one
group of the form $\PSL_2(p^e)$ or $\PGL_2(p^e)$ which is a
homomorphic image of $T_{r,s,t}$ with torsion-free kernel. The
remaining primes $p$ satisfy that at least one of $r,s,t$ is a
multiple of $p$ which is not $p$, and for such primes, for all
positive integers $e$, neither $\PSL_2(p^e)$ nor $\PGL_2(p^e)$
contains three elements of orders $r, s$ and $t$. Recently,
Marion~\cite{Mar09} has provided another proof for the case where
$r,s,t$ are primes relying on probabilistic group theoretical
methods.

Before stating these results in \S\ref{sect.LR} we introduce some
notation in \S\ref{sect.TrOrd}. In \S\ref{sect.Mac} we present the
main results of Macbeath~\cite{Mac} and explain the concept for a
hyperbolic triple $(r,s,t)$ to be \emph{irregular w.r.t $q$}. In
Sections \S\ref{sect.inv} and \S\ref{sect.unip} we discuss the
notions of \emph{good involuting triple w.r.t $q$} and \emph{good
$G$-order} respectively.

\subsection{Orders and traces}\label{sect.TrOrd}
If $n$ is a positive integer dividing $(q-1)/d$ or $(q+1)/d$ or
equal to $p$, then $G=\PSL_2(q)$ contains an element of order $n$.
In this case, we will say that $n$ is a \emph{$G$-order}. Similarly,
if $n$ is a positive integer dividing $q-1$ or $q+1$ or equal to
$p$, then $G_1=\PGL_2(q)$ contains an element of order $n$, and we
will say that $n$ is a \emph{$G_1$-order}.

More precisely, one would like to determine the smallest positive
integer $e$ such that $\PGL_2(p^e)$ (respectively $\PSL_2(p^e)$)
contains an element of order $n$, hence we introduce the following
notation.

For a prime $p$, a positive integer $n$ coprime to $p$, and a
$k$-tuple $(n_1,\dots,n_k)$ of positive integers $n_i$ each coprime
to $p$ or equal to $p$, we let
\[
\mu_{\PGL}(p,p) = \mu_{\PSL}(p,p)=1,
\]
\[
\mu_{\PGL}(p,n)= \min \bigl\{f>0 \,:\, p^f \equiv \pm 1 \bmod{n}
\bigr\},
\]
\[
\mu_{\PSL}(p,n)= \min \bigl\{f>0 \,:\, p^f \equiv \pm 1
\bmod{(\gcd(2,n)\cdot n)}\bigr\},
\]
\[
\mu_{\PGL}(p;n_1,\dots,n_k)= \lcm
\bigl(\mu_{\PGL}(p,n_1),\dots,\mu_{\PGL}(p,n_k)\bigr)
\]
and
\[
\mu_{\PSL}(p;n_1,\dots,n_k)= \lcm
\bigl(\mu_{\PSL}(p,n_1),\dots,\mu_{\PSL}(p,n_k)\bigr).
\]

Therefore, $\mu_{\PGL}(p,n)$ (respectively $\mu_{\PSL}(p,n)$) is the
smallest positive integer $e$ such that $\PGL_2(p^e)$ (respectively
$\PSL_2(p^e)$) contains an element of order $n$. Also
$\mu_{\PGL}(p;n_1,\dots,n_k)$ (respectively
$\mu_{\PSL}(p;n_1,\dots,n_k)$) is the smallest positive integer $e$
such that $\PGL_2(p^e)$ (respectively $\PSL_2(p^e)$) contains
elements of orders $n_1,\dots,n_k$.

For any non-central matrix $A \in G_0$, its trace $\tr(A)$
determines uniquely the $G_1$-conjugacy class of $\bar{A}$ (see
Table~\ref{table.elm.PSL}), and so also the order of $\bar{A}$ is
uniquely determined by $\tr(A)$.

Hence, for a $G$-order $n$, we denote
\begin{equation}
\Tr_q(n) = \{\al \in \mathbb{F}_q :\, \al=\tr(A), A \in G_0, |\bar{A}|=n\}.
\end{equation}
It is easy to see from Table~\ref{table.elm.PSL} that for any prime
power $q$, $\Tr_q(2)=\{0\}$, $\Tr_q(3)=\{\pm 1\}$, and for any odd
$q=p^e$, $\Tr_q(p)=\{\pm 2\}$. Moreover, when $q$ is odd, then $\al
\in \Tr_q(n)$ if and only if $-\al \in \Tr_q(n)$. In fact, for any
prime power $q$ and integer $n>1$, $\Tr_q(n)$ can be effectively
computed as follows.

\begin{prop}\label{prop.Tr.q.n} Denote by $\Pri_q(n)$ the set of primitive roots of unity of
order $n$ in $\mathbb{F}_q$.
\begin{itemize}
\item Let $q=2^e$ for some positive integer $e$ and let $n>1$ be an integer, then
\begin{equation}
\Tr_q(n) = \begin{cases} \{0\} & if \ n=2 \\
\{a+a^{-1}: a \in \Pri_q(n)\} & if \ n \ divides \ q-1 \\
\{b+b^q: b \in \Pri_{q^2}(n)\} & if \ n \ divides \ q+1 \\
\emptyset & otherwise
\end{cases}
\end{equation}

\item Let $q=p^e$ for some odd prime $p$ and some positive integer $e$ and let $n>1$ be an integer, then
\begin{equation}
\Tr_q(n) = \begin{cases} \{\pm 2\} & if \ n=p \\
\{\pm (a+a^{-1}): a \in \Pri_q(2n)\} & if \ n \ divides \ \frac{q-1}{2} \\
\{\pm (b+b^q): b \in \Pri_{q^2}(2n)\} & if \ n \ divides \ \frac{q+1}{2} \\
\emptyset & otherwise
\end{cases}
\end{equation}
\end{itemize}
\end{prop}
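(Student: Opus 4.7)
The plan is to read off the trace set directly from the Jordan form classification in Table~\ref{table.elm.PSL}, treating even and odd characteristic separately because of the behaviour of the centre of $\SL_2(q)$. In both characteristics the only unipotent order is $p$, with the unipotent Jordan block having trace $\pm 2$ (which collapses to $0$ when $p=2$); this handles the rows $n=2$ and $n=p$ of the two displays. If $n>1$ is neither $p$ nor admissible as a split or non-split order (cf.~Table~\ref{table.gp.elm}), then no conjugacy class of order $n$ exists in $G$, giving $\Tr_q(n)=\emptyset$ in the remaining branch.

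For $q=2^e$ one has $d=1$ and $\PSL_2(q)=\SL_2(q)$, so the order of $\bar A$ equals that of $A$. Writing a split $A$ as $\mathrm{diag}(a,a^{-1})$ with $a\in\mathbb{F}_q^*$, the order of $A$ equals $\ord(a)$, so $A$ has order $n$ iff $a\in\Pri_q(n)$ and its trace is $a+a^{-1}$. In the non-split case $A$ has eigenvalues $a,a^q\in\mathbb{F}_{q^2}^*$ with $a^{q+1}=1$; this has order $n$ iff $a\in\Pri_{q^2}(n)$, and any $a\in\Pri_{q^2}(n)$ with $n\mid q+1$ automatically satisfies $a^{q+1}=1$. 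The two cases are disjoint because $\gcd(q-1,q+1)=1$ in characteristic $2$.

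In odd characteristic $\bar A$ has order $n$ iff $A^n=\pm I$ with $n$ minimal, equivalently $A\in\SL_2(q)$ has order $n$ or $2n$. For split $A\sim\mathrm{diag}(a,a^{-1})$ this forces $\ord(a)\in\{n,2n\}$, with the value $n$ attained only when $n$ is odd (otherwise $A^{n/2}=-I$ already). The key observation is that when $n$ is odd the map $a\mapsto -a$ is a bijection between order-$n$ and order-$2n$ elements of $\mathbb{F}_q^*$, because $-1\notin\langle a\rangle$ when $\ord(a)$ is odd. Consequently the admissible eigenvalues form $\Pri_q(2n)\cup(-\Pri_q(2n))$, and since $\tr(-A)=-\tr(A)$ the trace set collapses to $\{\pm(a+a^{-1}):a\in\Pri_q(2n)\}$, non-empty precisely when $2n\mid q-1$, i.e.\ $n\mid(q-1)/2$. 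The non-split case is identical after replacing $\mathbb{F}_q^*$ by the norm-one subgroup of $\mathbb{F}_{q^2}^*$ (cyclic of order $q+1$) and $a^{-1}$ by $a^q$, yielding $\{\pm(b+b^q):b\in\Pri_{q^2}(2n)\}$ when $2n\mid q+1$. The main technical subtlety is this odd-characteristic bookkeeping: the two admissible $\SL_2(q)$-orders must be merged into one sign-symmetric formula, which is exactly what the $a\leftrightarrow -a$ trick accomplishes.
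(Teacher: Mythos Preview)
Your argument is correct. The paper does not actually give a proof of this proposition: it is stated immediately after Tables~\ref{table.gp.elm} and~\ref{table.elm.PSL} as something that ``can be effectively computed'' from the Jordan form classification, and no further justification is supplied. Your write-up fills in precisely the details that the paper leaves implicit, and does so along the lines the paper evidently intends---reading off the trace from the diagonal Jordan form and, in odd characteristic, tracking the ambiguity coming from the centre $\{\pm I\}$ of $\SL_2(q)$. The parity bookkeeping you do (the bijection $a\leftrightarrow -a$ between $\Pri_q(n)$ and $\Pri_q(2n)$ when $n$ is odd, and the observation that $\Pri_q(2n)$ is already closed under negation when $n$ is even) is exactly what is needed to unify the two possible $\SL_2$-orders $n$ and $2n$ into the single sign-symmetric formula $\{\pm(a+a^{-1}):a\in\Pri_q(2n)\}$.
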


\begin{proof} We prove the case where $q$ is odd and $n$ divides $(q-1)/2$.
The other cases are similar.

Assume first that $n$ is even. Let $a$ be a primitive root of unity
of order $2n$. Then $-a$ is also a primitive root of unity of order
$2n$, and $(-a)^n = a^n = -1$. Thus the matrices
\[
    A = \begin{pmatrix}
    a & 0 \\ 0 & a^{-1}
    \end{pmatrix}
    \quad \text{and} \quad
    -A = \begin{pmatrix}
    -a & 0 \\ 0 & -a^{-1}
    \end{pmatrix}
\]
both reduce to the same element $\bar{A} \in G$ of order $n$. Hence,
$a+a^{-1}$ and $-(a+a^{-1})$ both belong to $\Tr_q(n)$. $\Tr_q(n)$
contains only the elements of the claimed form, since it suffices to
consider only the conjugacy classes of elements of $G$, described in
Table~\ref{table.elm.PSL}.

Now assume that $n$ is odd. Then $a$ is a primitive root of unity of
order $n$ if and only if $-a$ is a primitive root of unity of order
$2n$. In this case, $(-a)^n=-a^n=-1$. Thus the matrices
\[
    A = \begin{pmatrix}
    a & 0 \\ 0 & a^{-1}
    \end{pmatrix}
    \quad \text{and} \quad
    -A = \begin{pmatrix}
    -a & 0 \\ 0 & -a^{-1}
    \end{pmatrix}
\]
both reduce to the same element $\bar{A} \in G$ of order $n$. Hence,
$a+a^{-1}$ and $-(a+a^{-1})$ both belong to $\Tr_q(n)$. Again,
considering the conjugacy classes appearing in
Table~\ref{table.elm.PSL} shows that $\Tr_q(n)$ contains only the
elements of the claimed form.
\end{proof}

\subsection{Hyperbolic triples and $\PSL_2(q),\PGL_2(q)$}\label{sect.LR}

The following theorems summarize the results in~\cite[Theorems 4.1
and 4.2]{LR1} and~\cite[Theorems 1 and 2]{LR2}, which characterize,
for a given $q$, the hyperbolic triangle groups which have
$\PSL_2(q)$ (respectively, $\PGL_2(q)$) as quotients with
torsion-free kernel. The notion of an \emph{irregular triple w.r.t
$q$} is given in Lemma~\ref{lem.irr} in \S\ref{sect.Mac}.

\begin{thm}\cite{LR1,LR2}.\label{thm.tri.PSL2q}
Given a prime $p$ and a hyperbolic triple $(r,s,t)$ of integers,
$\PSL_2(p^e)$ is a quotient of $T_{r,s,t}$ with torsion-free kernel
if and only if $(r,s,t)$ and $e$ satisfy one of the conditions given
in Table~\ref{table.triple.PSL}.

\begin{table}[h]
\begin{tabular} {|c|c|c|c|}
\hline $p$ & $(r,s,t)$ & $e$ & further conditions \\
\hline \hline

$p \geq 5$ & $(p,p,p)$ & $1$ & - \\
\hline

$p \geq 3$ & permutation of $(p,p,t')$ & $\mu_{\PSL}(p,t')$ & - \\
& $\gcd(t',p)=1$ & & \\ \hline

$p \geq 3$ & permutation of $(p,s',t')$ & $\mu_{\PSL}(p;s',t')$ & either at most one of $r,s,t$ is even, \\
& $\gcd(s' t', p)=1$ & & or: if at least two of $r,s,t$ are even \\
\cline{1-3}
$p \geq 3$ & $\gcd(r s t, p)=1$ & $\mu_{\PSL}(p;r,s,t)$ & then $(r,s,t)$ is not irregular w.r.t $p^e$  \\
\hline

$p=2$ & - & $\mu_{\PSL}(2;r,s,t)$ & -\\
\hline
\end{tabular}
\caption{Hyperbolic triangle groups $T_{r,s,t}$ which have
$\PSL_2(p^e)$ as quotients} \label{table.triple.PSL}
\end{table}
\end{thm}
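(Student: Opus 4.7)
The proof plan is to follow the classical approach of Macbeath via trace triples, extended by Langer--Rosenberger and Levin--Rosenberger to handle the cases involving the prime~$p$ among the orders.

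\emph{Necessity.} The condition that $G = \PSL_2(p^e)$ is a quotient of $T_{r,s,t}$ with torsion-free kernel means there exist $x,y,z \in G$ of orders exactly $r,s,t$ with $xyz = 1$ and $\langle x,y\rangle = G$. By Table~\ref{table.gp.elm}, an element of $G$ of order $n$ exists only if $n = p$ or $n \mid (p^e \pm 1)/d$ with $d = \gcd(2, p-1)$, so by the definition of $\mu_{\PSL}$ we must have $e$ a multiple of $\mu_{\PSL}(p;r,s,t)$. I would then show that $e$ equals this minimum exactly, by noting that the three elements are conjugate in $\SL_2(\overline{\mathbb{F}}_p)$ to matrices whose traces lie in $\mathbb{F}_{p^{\mu}}$ (by Proposition~\ref{prop.Tr.q.n}), so that a suitable conjugate of $\langle x,y\rangle$ already lies in the subfield subgroup $\PSL_2(p^{\mu})$ (class $\mathcal{C}_5$ of Table~\ref{table.subgroups}); forcing equality $e = \mu$ unless we are willing to land in a proper $\mathcal{C}_5$-subgroup.

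\emph{Sufficiency via trace triples.} Assume $e = \mu_{\PSL}(p;r,s,t)$. For any choice of admissible traces $\alpha \in \Tr_q(r)$, $\beta \in \Tr_q(s)$, $\gamma \in \Tr_q(t)$, there exist $A,B,C \in \SL_2(q)$ with $ABC = I$ realising these traces (one fixes $A$ in its Jordan form, chooses a diagonal/unipotent $B$ with $\tr B = \beta$, and picks the remaining entry of $B$ so that $\tr(AB)^{-1} = \gamma$; a solution exists provided the Fricke discriminant $\alpha^2 + \beta^2 + \gamma^2 - \alpha\beta\gamma - 4$ does not force a degeneracy that shrinks the field of definition). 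The subgroup $H = \langle \bar A, \bar B\rangle \leq G$ must be analysed against Table~\ref{table.subgroups}. The structural subgroups of classes $\mathcal{C}_1,\mathcal{C}_2,\mathcal{C}_3$ are ruled out since they cannot carry three non-commuting elements with the required hyperbolic orders (a Borel has a unique cyclic quotient, while a dihedral or cyclic subgroup forces pairwise commutation or restricts the orders to divide $2(q\pm 1)/d$ in a rigid way). Subfield subgroups (class~$\mathcal{C}_5$) are excluded by the minimality $e = \mu_{\PSL}(p;r,s,t)$: if $H \leq \PSL_2(p^f)$ with $f < e$, then $H$ would already contain elements of orders $r, s, t$, contradicting the definition of $\mu_{\PSL}$. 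Small subgroups (class~$\mathcal{C}_6$ and~$\mathcal{S}$) can be discarded by comparing $\{r,s,t\}$ to the element orders in $A_4, S_4, A_5$, which are restrictive enough to fit only a finite list of triples — these are exactly the non-hyperbolic ones we have already excluded by assumption.

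\emph{The delicate cases.} Two residual obstructions remain, and these are the heart of the result. First, when at least two of $r,s,t$ are even, the generators can be forced to lie in a dihedral subgroup of class $\mathcal{C}_2$ or $\mathcal{C}_3$; avoiding this is precisely what the ``irregular w.r.t. $p^e$'' hypothesis encodes. Concretely, $(r,s,t)$ is irregular w.r.t. $p^e$ when every admissible trace triple $(\alpha, \beta, \gamma)$ yields $(\bar A, \bar B, \bar C)$ sitting in a dihedral group, and the Langer--Rosenberger criterion identifies this combinatorially in terms of divisibility relations among $r,s,t$ and $q \pm 1$. Second, when $p \in \{r,s,t\}$ appears with multiplicity, e.g. the row $(p,p,t')$ with $\gcd(t',p) = 1$, one argues separately: a pair of unipotent elements $\bar A, \bar B$ generates either a Borel subgroup (when they share a fixed point on $\mathbb{P}^1(q)$) or the full group $\PSL_2(q')$ where $\mathbb{F}_{q'}$ is the field generated by the coefficients; the Borel case is incompatible with $|\bar A \bar B| = t'$ unless $t' \mid q-1$, and the field generated by $A, B$ has $q' = p^{\mu_{\PSL}(p,t')}$ by a direct computation with companion matrices.

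\emph{Main obstacle.} The hardest part is the irregularity analysis: unlike the generic case, here one must show that for every choice of admissible trace triple, the generators are forced into a dihedral subgroup — which requires a complete classification of the trace triples whose commutator $[A,B]$ has trace $2$ (the characteristic signal that the group fixes an unordered pair of points on $\mathbb{P}^1(q)$) or, equivalently, a vanishing of the Fricke invariant. Working this out uniformly and extracting the condition that at most one of $r,s,t$ is even in the ``regular'' case is the main technical content of~\cite{LR1,LR2}, and I would import that combinatorial lemma rather than rederive it.
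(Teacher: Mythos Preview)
Your overall framework (Macbeath trace triples, then eliminate subgroup classes from Table~\ref{table.subgroups}) is the right one and matches the route of \cite{LR1,LR2} as summarised in \S\ref{sect.Mac}. However, there is a genuine error in your identification of what ``irregular w.r.t.\ $p^e$'' means, and this breaks the argument.

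You write that irregularity encodes the situation in which every admissible trace triple forces $(\bar A,\bar B,\bar C)$ into a \emph{dihedral} subgroup, and you link this to the vanishing of the Fricke invariant $\alpha^2+\beta^2+\gamma^2-\alpha\beta\gamma-4$. That is not what irregularity is. In the paper's terminology (see \S\ref{sect.Mac}), the vanishing of this invariant is the \emph{singularity} condition, and by \cite[Theorem~2]{Mac} it detects \emph{structural} subgroups (Borel or cyclic), not dihedral ones. Irregularity, by contrast, is a \emph{subfield} phenomenon: when $p$ is odd and $e$ is even, an irregular $G$-triple is one generating a copy of $\PGL_2(q_1)$ with $q=q_1^2$; exactly one of the three elements then lies in $\PSL_2(q_1)$ and the other two in $\PGL_2(q_1)\setminus\PSL_2(q_1)$. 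The obstruction captured by the ``further conditions'' column of Table~\ref{table.triple.PSL} is therefore not a dihedral obstruction at all, but the failure of the minimal-$e$ argument to exclude the $\PGL_2$-type $\mathcal{C}_5$ subgroups (your minimality argument only rules out $\PSL_2(p^f)$ with $f\mid e$, not $\PGL_2(p^{e/2})$). Lemma~\ref{lem.irr} gives the combinatorial characterisation of exactly this.

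Two secondary gaps: (1) Structural subgroups are not excluded by order considerations alone as you suggest; a Borel subgroup contains elements of order $p$ and of every order dividing $(q-1)/d$, so one genuinely needs the non-vanishing of the Fricke polynomial, and \cite[Lemma~3.3]{LR1} shows there is enough freedom in $\Tr_q(r)\times\Tr_q(s)\times\Tr_q(t)$ to achieve this. (2) Your claim that the small subgroups $A_4,S_4,A_5$ accommodate only non-hyperbolic triples is false: for instance $(2,5,5)$, $(3,3,4)$, $(3,3,5)$ are hyperbolic and realised in $A_5$ or $S_4$. Ruling these out for the right $q$ is the content of \cite{LR2}, which must be invoked separately.

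Finally, note that the paper itself does not prove Theorem~\ref{thm.tri.PSL2q}; it is quoted from \cite{LR1,LR2}. So any proof you supply is being compared to those sources rather than to an argument in this paper.
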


\begin{thm}\cite{LR1,LR2}.\label{thm.tri.PGL2q}
Given an odd prime $p$ and a hyperbolic triple $(r,s,t)$ of
integers, $\PGL_2(p^e)$ is a quotient of $T_{r,s,t}$ with
torsion-free kernel if and only if $(r,s,t)$ and $e$ satisfy one of
the conditions given in Table~\ref{table.triple.PGL}.

\begin{table}[h]
\begin{tabular} {|c|c|c|}
\hline $(r,s,t)$ & $e$ & further conditions \\
\hline \hline
permutation of $(p,s',t')$ & $\mu_{\PSL}(p;s',t')/2$ & at least two of $r,s,t$ are even \\
$\gcd(s' t', p)=1$ & & and \\
\cline{1-2}
$\gcd(r s t, p)=1$ & $\mu_{\PSL}(p;r,s,t)/2$ & $(r,s,t)$ is irregular w.r.t $p^{2e}$ \\
\hline
\end{tabular}
\caption{Hyperbolic triangle groups $T_{r,s,t}$ which have
$\PGL_2(p^e)$ as quotients} \label{table.triple.PGL}
\end{table}
\end{thm}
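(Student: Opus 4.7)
The plan is to reduce Theorem~\ref{thm.tri.PGL2q} to Theorem~\ref{thm.tri.PSL2q} applied to $\PSL_2(p^{2e})$, exploiting the fact that $\PGL_2(p^e)$ embeds as an index-$2$ subgroup of $\PSL_2(p^{2e})$ (a subfield subgroup of class $\mathcal{C}_5$ in Table~\ref{table.subgroups}). Under this embedding, a torsion-free-kernel surjection $T_{r,s,t} \twoheadrightarrow \PGL_2(p^e)$ is the same datum as a torsion-free-kernel homomorphism $T_{r,s,t} \to \PSL_2(p^{2e})$ whose image is the proper subgroup $\PGL_2(p^e)$.

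First I would establish that at least two of $r,s,t$ must be even. Using the quotient $\PGL_2(p^e)/\PSL_2(p^e) \cong \mathbb{Z}/2$, the defining relation $xyz=1$ forces an even number of images of the generators of $T_{r,s,t}$ to lie in the non-trivial coset; since the images must generate $\PGL_2(p^e)$, that number is exactly two. Comparing the $\PSL_2(p^e)$-order and $\PGL_2(p^e)$-order columns of Table~\ref{table.gp.elm}, I would verify that every semisimple element of $\PGL_2(p^e) \setminus \PSL_2(p^e)$ has even order, which supplies the two even entries in $(r,s,t)$.

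For the numerical condition $e = \mu_{\PSL}(p;r,s,t)/2$: since the image in $\PSL_2(p^{2e})$ contains elements of orders $r,s,t$, Theorem~\ref{thm.tri.PSL2q} together with the minimality in the definition of $\mu_{\PSL}$ pins down $2e = \mu_{\PSL}(p;r,s,t)$ (or $\mu_{\PSL}(p;s',t')$ in the mixed-unipotent row). The irregularity condition then follows from the image being a proper subgroup of $\PSL_2(p^{2e})$: by the trace analysis of~\cite{LR1,LR2} (building on Macbeath), the only way a triple with the correct orders can generate a proper subgroup of $\PSL_2(p^{2e})$ is by being irregular w.r.t.\ $p^{2e}$. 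Conversely, assuming the Table~\ref{table.triple.PGL} conditions, Theorem~\ref{thm.tri.PSL2q} yields a torsion-free-kernel homomorphism $T_{r,s,t} \to \PSL_2(p^{2e})$ whose image lies in a proper subgroup.

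The main obstacle is ruling out the other maximal subgroups of $\PSL_2(p^{2e})$ as possible images. Small subgroups ($A_4$, $S_4$, $A_5$, dihedral) are excluded because $(r,s,t)$ is hyperbolic. Smaller subfield subgroups $\PSL_2(p^f)$ with $f<2e$ and $\PGL_2(p^{f'})$ with $f'<e$ are excluded by the minimality $2e = \mu_{\PSL}(p;r,s,t)$, which asserts that these orders are not simultaneously realized in a smaller subfield. Finally, the two-even condition forces the image to contain an element outside $\PSL_2(p^e)$, pinning down the image as $\PGL_2(p^e)$ and completing the identification.
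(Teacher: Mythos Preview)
The paper does not prove this theorem; it is quoted from \cite{LR1,LR2} (together with Theorem~\ref{thm.tri.PSL2q} and Lemma~\ref{lem.irr}) as a summary of results proved there. So there is no in-paper argument to compare against, and your sketch should be measured against what \S\ref{sect.Mac} reports about the Macbeath--Langer--Rosenberger approach.

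Your overall framework---realize $\PGL_2(p^e)$ inside $\PSL_2(p^{2e})$ and ask when a $T_{r,s,t}$-triple there generates exactly this subfield subgroup---is indeed theirs, but several steps are wrong or incomplete. First, $\PGL_2(p^e)$ is \emph{not} of index~$2$ in $\PSL_2(p^{2e})$; compare orders. You have confused this with $[\PGL_2(p^e):\PSL_2(p^e)]=2$. Second, in the converse direction you invoke Theorem~\ref{thm.tri.PSL2q} to ``yield a homomorphism $T_{r,s,t}\to\PSL_2(p^{2e})$ whose image lies in a proper subgroup''; but that theorem says precisely that $\PSL_2(p^{2e})$ is \emph{not} a quotient when $(r,s,t)$ is irregular---it produces no map. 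The existence of a triple with the prescribed orders comes from Macbeath's trace result \cite[Theorem~1]{Mac}, and arranging it to be non-singular and to avoid small subgroups is the content of \cite[Lemma~3.3]{LR1} and \cite{LR2}. Relatedly, ``small subgroups are excluded because $(r,s,t)$ is hyperbolic'' is false: hyperbolic triples like $(2,4,5)$ or $(3,3,4)$ have all entries in $\{2,3,4,5\}$ and \emph{can} generate $S_4$ or $A_5$; one must choose traces carefully, which is exactly what \cite{LR2} does. Finally, in the forward direction, knowing that $\PSL_2(p^{2e})$ contains elements of orders $r,s,t$ gives only $\mu_{\PSL}(p;r,s,t)\mid 2e$, not equality; pinning down $2e=\mu_{\PSL}(p;r,s,t)$ and the remaining clauses of Lemma~\ref{lem.irr} requires ruling out that the generating triple lies (up to conjugacy) in some smaller $\PGL_2(p^{e'})$ or $\PSL_2(p^f)$, which is again the substantive work carried out in \cite{LR1}.
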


The following corollary follows immediately from
Theorems~\ref{thm.tri.PSL2q} and~\ref{thm.tri.PGL2q}.

\begin{corr}\label{cor.tri.all}
Given a prime $p$ and a hyperbolic triple $(r,s,t)$ of integers,
such that each of $r,s,t$ is either coprime to $p$ or equal to $p$,
there exists a unique exponent $e$ such that $\PSL_2(p^e)$ or
$\PGL_2(p^e)$ is a quotient of $T_{r,s,t}$ with torsion-free kernel.
More precisely, let $e = \mu_{\PSL}(p;r,s,t)$ then
\begin{enumerate}\renewcommand{\theenumi}{\alph{enumi}}
\item If $(r,s,t)$ is \emph{not irregular w.r.t to $p^e$} then
$\PSL_2(p^e)$ is a quotient of $T_{r,s,t}$ with torsion-free kernel.

\item If $e$ is even and $(r,s,t)$ is \emph{irregular w.r.t to $p^e$}
then $\PGL_2(p^{e/2})$ is a quotient of $T_{r,s,t}$ with
torsion-free kernel.
\end{enumerate}
\end{corr}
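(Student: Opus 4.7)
The corollary is presented as an immediate consequence of Theorems~\ref{thm.tri.PSL2q} and~\ref{thm.tri.PGL2q}, and the plan is indeed a careful cross-reading of the two tables. I set $e_0 := \mu_{\PSL}(p;r,s,t)$; the first step is to observe that in every row of Table~\ref{table.triple.PSL} the listed exponent coincides with $e_0$ (in rows~1 and~2 this uses $\mu_{\PSL}(p,p)=1$, so that $\lcm\bigl(\mu_{\PSL}(p,p),\mu_{\PSL}(p,p),\mu_{\PSL}(p,t')\bigr)=\mu_{\PSL}(p,t')=\mu_{\PSL}(p;r,s,t)$), while in both rows of Table~\ref{table.triple.PGL} the listed exponent is $e_0/2$. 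Hence the only candidate PSL group is $\PSL_2(p^{e_0})$ and the only candidate PGL group is $\PGL_2(p^{e_0/2})$.

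For existence I would split on how many entries of $(r,s,t)$ equal $p$; under the hypothesis that each entry is either coprime to $p$ or equal to $p$, exactly one row of Table~\ref{table.triple.PSL} applies. If $p=2$ (row~5), or if $p$ is odd and at least two of $r,s,t$ equal $p$ (rows~1 and~2, where the further parity condition ``at most one of $r,s,t$ is even'' is automatic since $p$ is odd), then $\PSL_2(p^{e_0})$ is a quotient, giving case~(a). In the remaining rows (3 and~4) the outcome is governed by whether $(r,s,t)$ is irregular w.r.t.\ $p^{e_0}$: if not, $\PSL_2(p^{e_0})$ is a quotient and we are in case~(a); if so---which in particular requires at least two of $r,s,t$ to be even, forcing $e_0$ to be even---Table~\ref{table.triple.PGL} yields $\PGL_2(p^{e_0/2})$ as a quotient, which is case~(b).

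For uniqueness I would suppose $\PSL_2(p^e)$ (respectively $\PGL_2(p^e)$) is a quotient of $T_{r,s,t}$ with torsion-free kernel; then Theorem~\ref{thm.tri.PSL2q} (respectively Theorem~\ref{thm.tri.PGL2q}) forces $e=e_0$ (respectively $e=e_0/2$), so only these two values of $e$ are possible. To rule out both holding at once I would invoke the mutual exclusivity of the irregularity hypotheses: Table~\ref{table.triple.PGL} demands $(r,s,t)$ irregular w.r.t.\ $p^{e_0}$, whereas the only branches of Table~\ref{table.triple.PSL} that could conceivably apply (rows~3 and~4 in the regime where at least two of $r,s,t$ are even) demand the opposite. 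There is no conceptual difficulty; the main obstacle is simply bookkeeping, in particular verifying that the irregularity hypothesis presupposes $e_0$ to be even and thus makes the exponent $e_0/2$ in case~(b) a valid positive integer, which I expect to follow directly from the definition of irregularity introduced in Section~\ref{sect.surj}.
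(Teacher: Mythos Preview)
Your proposal is correct and follows exactly the route the paper intends: the paper simply declares the corollary an immediate consequence of Theorems~\ref{thm.tri.PSL2q} and~\ref{thm.tri.PGL2q}, and your write-up is a faithful (and more explicit) unpacking of that claim, reading off from Tables~\ref{table.triple.PSL} and~\ref{table.triple.PGL} that the only candidate exponents are $e_0$ and $e_0/2$ and that the irregularity condition makes the two cases mutually exclusive. Your remark that irregularity forces $e_0$ even is confirmed by Lemma~\ref{lem.irr}, where evenness of $e$ is listed explicitly in each of cases~$(\alpha)$, $(\beta)$, $(\gamma)$.
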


\begin{example}\label{ex.gen.triples}
In Tables~\ref{table.Ex.triples.5.7.8.9}
and~\ref{table.Ex.triples.11.13} we present \emph{all} the
hyperbolic triples $(r,s,t)$ of integers such that $\PSL_2(q)$
(respectively $\PGL_2(q)$) is a quotient of $T_{r,s,t}$ with
torsion-free kernel, for $q=3,4,5,7,8,9,11,13$.
These triples were computed using \textsc{Magma}.

The \emph{irregular triples w.r.t $q^2$} are divided according to
the three cases of Lemma~\ref{lem.irr}, and among them, the
\emph{good involuting triples w.r.t $q$} are marked in {\bf bold}
(see \S\ref{sect.Mac} and \S\ref{sect.inv}).

\begin{table}[h]
\begin{tabular} {|c|c|c|c|cc|}
\hline $q$ & $G$-orders & $G_1$-orders & triples for $G$ & & triples for $G_1$ (irregular) \\
\hline \hline
$3$ & $2,3$ & $2,3,$ & None & $(\al)$ &  $(3, 4, 4)$ \\
& & $4$ &  & & \\ \hline
$4$ & $2,3,5$ & $2,3,5$ & $(2,5,5)$, $(3,3,5)$, & & None \\
& & & $(3,5,5)$, $(5,5,5)$ & & \\
\hline
$5$ & $2,3,5$ & $2,3,5,$ & $(2,5,5)$, $(3,3,5)$,
& $(\al)$ &  $(3, 4, 4)$, $(3, 4, 6)$, $(3, 6, 6)$, \\
& & $4,6$ & $(3,5,5)$, $(5,5,5)$ & & $(4, 4, 5)$, $(4, 5, 6)$, $(5, 6, 6)$ \\
& & & & $(\be)$ & $(2, 4, 6)$, $(2, 6, 6)$ \\
& & & & $(\ga)$ & $(2, 4, 5)$, ${\bf (2, 5, 6)}$ \\
\hline
$7$ & $2,3,4,7$ & $2,3,4,7,$ & $(2, 3, 7)$, $(2, 4, 7)$, & $(\al)$ & $(3, 6, 6)$, $(3, 6, 8)$, $(3, 8, 8)$, \\
& & $6,8$ & $(2, 7, 7)$, $(3, 3, 4)$, & & $(4, 6, 6)$, $(4, 6, 8)$, $(4, 8, 8)$, \\
& & & $(3, 3, 7)$, $(3, 4, 4)$, & & $(6, 6, 7)$, $(6, 7, 8)$, $(7, 8, 8)$ \\
& & & $(3, 4, 7)$, $(3, 7, 7)$, & $(\be)$ & $(2, 6, 6)$, $(2, 6, 8)$, ${\bf (2, 8, 8)}$ \\
& & & $(4, 4, 4)$, $(4, 4, 7)$, & $(\ga)$ & $(2, 3, 8 )$, $(2, 4, 6)$, $(2, 4, 8)$, \\
& & & $(4, 7, 7)$, $(7, 7, 7)$ & & ${\bf (2, 6, 7)}$, $(2, 7, 8)$ \\
\hline
$8$ & $2,3,7,9$ & $2,3,7,9$ & $(2, 3, 7)$, $(2, 3, 9)$, & & \\
& & & $(2, 7, 7)$, $(2, 7, 9)$, & & None \\
& & & $(2, 9, 9)$, $(3, 3, 7)$, & & \\
& & & $(3, 3, 9)$, $(3, 7, 7)$, & & \\
& & & $(3, 7, 9)$, $(3, 9, 9)$, & & \\
& & & $(7, 7, 7)$, $(7, 7, 9)$, & & \\
& & & $(7, 9, 9)$, $(9, 9, 9)$ & & \\
\hline
$9$ & $2,3,4,5$ & $2,3,4,5,$ & $(2, 4, 5)$, $(2, 5, 5)$, & $(\al)$ & $(3, 8, 8)$, $(3, 8, 10)$, $(3, 10, 10)$, \\
& & $8,10$ & $(3, 3, 4)$, $(3, 3, 5)$, & & $(4, 8, 8)$, $(4, 8, 10)$, $(4, 10, 10)$, \\
& & & $(3, 4, 5)$, $(3, 5, 5)$, & & $(5, 8, 8)$, $(5, 8, 10)$, $(5, 10, 10)$ \\
& & & $(4, 4, 4)$, $(4, 4, 5)$, & $(\be)$ & ${\bf (2, 8, 8)}$, $(2, 8, 10)$, $(2, 10, 10)$ \\
& & & $(4, 5, 5)$, $(5, 5, 5)$  & $(\ga)$ & $(2, 3, 8)$, ${\bf (2, 3, 10)}$, $(2, 4, 8)$, \\
& & & &  & $(2, 4, 10)$, $(2, 5, 8)$, ${\bf (2, 5, 10)}$ \\
\hline
\end{tabular}
\caption{Hyperbolic triples for $G$ and hyperbolic triples for $G_1$
(irregular w.r.t $q^2$), where $q=3,4,5,7,8,9$.}
\label{table.Ex.triples.5.7.8.9}
\end{table}

\begin{table}[h]
\begin{tabular} {|c|c|c|c|cc|}
\hline $q$ & $G$-orders & $G_1$-orders & triples for $G$ & & triples for $G_1$ (irregular) \\
\hline \hline
$11$ & $2, 3,$ & $2, 3,$ & $(2, 3, 11)$, $(2, 5, 5)$, & $(\al)$ & $(3, 4, 4)$, $(3, 4, 10)$, $(3, 4, 12)$, \\
& 5, 6, 11 & 5, 6, 11, & $(2, 5, 6)$, $(2, 5, 11)$, & & $(3, 10, 10)$, $(3, 10, 12)$, $(3, 12, 12)$, \\
& & 4, 10, 12 & $(2, 6, 6)$, $(2, 6, 11)$, & & $(4, 4, 5)$, $(4, 4, 6)$, $(4, 4, 11)$, \\
& & & $(2, 11, 11)$, $(3, 3, 5)$, & & $(4, 5, 10)$, $(4, 5, 12)$, $(4, 6, 10)$, \\
& & & $(3, 3, 6)$, $(3, 3, 11)$, & & $(4, 6, 12)$, $(4, 10, 11)$, $(4, 11, 12)$, \\
& & & $(3, 5, 5)$, $(3, 5, 6)$, & & $(5, 10, 10)$, $(5, 10, 12)$, $(5, 12, 12)$, \\
& & & $(3, 5, 11)$, $(3, 6, 6)$, & & $(6, 10, 10)$, $(6, 10, 12)$, $(6, 12, 12)$, \\
& & & $(3, 6, 11)$, $(3, 11, 11)$, & & $(10, 10, 11)$, $(10, 11, 12)$, $(11, 12, 12)$ \\
& & & $(5, 5, 5)$, $(5, 5, 6)$, & $(\be)$ & $(2, 4, 10)$, ${\bf (2, 4, 12)}$, $(2, 10, 10)$, \\
& & & $(5, 5, 11)$, $(5, 6, 6)$, & & $(2, 10, 12)$, ${\bf (2, 12, 12)}$ \\
& & & $(5, 6, 11)$, $(5, 11, 11)$, & $(\ga)$ & ${\bf (2, 3, 10)}$, $(2, 3, 12)$, $(2, 4, 5)$, \\
& & & $(6, 6, 6)$, $(6, 6, 11)$, & & $(2, 4, 6)$, $(2, 4, 11)$, ${\bf (2, 5, 10)}$, \\
& & & $(6, 11, 11)$, $(11, 11, 11)$ & & $(2, 5, 12)$, $(2, 6, 10)$, $(2, 6, 12)$, \\
& & & & & ${\bf (2, 10, 11)}$, $(2, 11, 12)$ \\
\hline
$13$ & $2, 3,$ & $2, 3,$ & $(2, 3, 7)$, $(2, 3, 13)$, & $(\al)$ & $(3, 4, 4)$, $(3, 4, 12)$, $(3, 4, 14)$, \\
& 6, 7, 13 &  6, 7, 13, & $(2, 6, 6)$, $(2, 6, 7)$, & & $(3, 12, 12)$, $(3, 12, 14)$, $(3, 14, 14)$, \\
& & $4, 12, 14$ & $(2, 6, 13)$, $(2, 7, 7)$, & & $(4, 4, 6)$, $(4, 4, 7)$, $(4, 4, 13)$, \\
& & & $(2, 7, 13)$, $(2, 13, 13)$, & & $(4, 6, 12)$, $(4, 6, 14)$, $(4, 7, 12)$, \\
& & & $(3, 3, 6)$, $(3, 3, 7)$, & & $(4, 7, 14)$, $(4, 12, 13)$, $(4, 13, 14)$, \\
& & & $(3, 3, 13)$, $(3, 6, 6)$, & & $(6, 12, 12)$, $(6, 12, 14)$, $(6, 14, 14)$, \\
& & & $(3, 6, 7)$, $(3, 6, 13)$, & & $(7, 12, 12)$, $(7, 12, 14)$, $(7, 14, 14)$, \\
& & & $(3, 7, 7)$, $(3, 7, 13)$, & & $(12, 12, 13)$, $(12, 13, 14)$, $(13, 14, 14)$ \\
& & & $(3, 13, 13)$, $(6, 6, 6)$, & $(\be)$ & ${\bf (2, 4, 12)}$, $(2, 4, 14)$, ${\bf (2, 12, 12)}$, \\
& & & $(6, 6, 7)$, $(6, 6, 13)$, & & $(2, 12, 14)$, $(2, 14, 14)$ \\
& & & $(6, 7, 7)$, $(6, 7, 13)$, & $(\ga)$ & $(2, 3, 12)$, ${\bf (2, 3, 14)}$, $(2, 4, 6)$, \\
& & & $(6, 13, 13)$, $(7, 7, 7)$, & & $(2, 4, 7)$, $(2, 4, 13)$, $(2, 6, 12)$, \\
& & & $(7, 7, 13)$, $(7, 13, 13)$, & & $(2, 6, 14)$, $(2, 7, 12)$, ${\bf (2, 7, 14)}$, \\
& & & $(13, 13, 13)$ & & $(2, 12, 13)$, ${\bf (2, 13, 14)}$ \\
\hline
\end{tabular}
\caption{Hyperbolic triples for $G$ and hyperbolic triples for $G_1$
(irregular w.r.t $q^2$), where $q=11,13$}
\label{table.Ex.triples.11.13}
\end{table}
\end{example}

\subsection{Generating triples and irregular triples}\label{sect.Mac}
Macbeath~\cite{Mac} classified the pairs of elements in $G$ in a way
which makes it easy to decide what kind of subgroup they generate.
He called a triple $(A,B,C)$ of elements in $G$ (respectively $G_0$)
such that $ABC=1$ a \emph{$G$-triple} (respectively
\emph{$G_0$-triple}). So if $(A,B,C)$ is a $G_0$-triple then $(\bar
A, \bar B, \bar C)$ is a $G$-triple.

By \cite[Theorem 1]{Mac}, for any $(\al,\be,\ga) \in
\mathbb{F}_q^3$, there exists a $G_0$-triple $(A,B,C)$ such that
$A$, $B$ and $C$ have respective traces $\al$, $\be$ and $\ga$.
Hence, if $(r,s,t)$ is a triple of $G$-orders then there exists a
$G$-triple $(\bar A, \bar B, \bar C)$ such that $\bar{A}$, $\bar{B}$
and $\bar{C}$ have respective orders $r$, $s$ and $t$.

Macbeath~\cite{Mac} called a $G_0$-triple $(A,B,C)$ \emph{singular}
if its corresponding traces $(\al,\be,\ga)$ satisfy the equality
\begin{equation}\label{eq.sing}
    \al^2+\be^2+\ga^2-\al\be\ga-4 = 0.
\end{equation}
Moreover, by \cite[Theorem 2]{Mac}, a $G_0$-triple $(A,B,C)$ is
singular if and only if the corresponding $G$-triple $(\bar A, \bar
B, \bar C)$ satisfies that $\langle \bar A, \bar B \rangle$ is a
\emph{structural} subgroup of $G$.

Observe that if $\langle \bar A, \bar B \rangle$ is a \emph{small}
subgroup, then the corresponding orders $(r,s,t)$ of $(\bar A, \bar
B, \bar C)$ satisfy that either two of $r,s,t$ equal to $2$ or
$r,s,t \in \{2,3,4,5\}$, but the converse might not be true. Indeed,
if $(\bar A, \bar B, \bar C)$ is a $G$-triple of respective orders
$(r,s,t)$ such that $(r,s,t)$ is hyperbolic and $r,s,t \in
\{2,3,4,5\}$ then $\langle \bar A, \bar B \rangle$ is not
necessarily a \emph{small} subgroup (see~\cite{LR2}).

In fact, when $(r,s,t)$ is a hyperbolic triple of $G$-orders, then
there is enough freedom in choosing the traces $\al \in \Tr_q(r)$,
$\be \in \Tr_q(s)$ and $\ga \in \Tr_q(t)$, such that
Equation~\eqref{eq.sing} does not hold, and in addition, they do not
correspond to a $G$-triple which generates a \emph{small} subgroup
(see \cite[Lemma 3.3]{LR1} and \cite{LR2}). Therefore, if $(r,s,t)$
is a hyperbolic triple of $G$-orders then there exists a $G$-triple
$(A, B, C)$ such that $A$, $B$ and $C$ have respective orders $r$,
$s$ and $t$, and moreover, $\langle A,B \rangle$ is a
\emph{subfield} subgroup of $G$.

When $p$ is odd and $e$ is even there are $G$-triples $(A,B,C)$
which generate a projective special linear subgroup $\PSL_2(q_1)$
(respectively projective general linear group $\PGL_2(q_1)$), where
$\mathbb{F}_{q_1}$ (respectively $\mathbb{F}_{q_1^2}$) is a subfield
of $\mathbb{F}_q$ (see Table~\ref{table.subgroups}).

If $q=q_1^2$ and $(A,B,C)$ is a $G$-triple that generates a subgroup
isomorphic to $\PGL_2(q_1)$ then exactly one of $(A,B,C)$ lies in
$\PSL_2(q_1)$, and we say that $(A,B,C)$ is an \emph{irregular
$G$-triple} (see \cite[\S9]{Mac}). On the other hand, if $(r,s,t)$
is a hyperbolic triple of $\PGL_2(q_1)$-orders then in particular it is
a hyperbolic triple of $G$-orders, hence there exists a
$G$-triple $(A,B,C)$ such that $A$, $B$ and $C$ have respective
orders $r$, $s$ and $t$. Consequently, $(r,s,t)$ is said to be
\emph{irregular w.r.t $q$} if $(A,B,C)$ is an irregular $G$-triple.
Langer and Rosenberger determined in \cite[Lemma 3.5]{LR1} the
necessary and sufficient condition for $(r,s,t)$ to be irregular
w.r.t $q$.

\begin{lemma}\cite{LR1}.\label{lem.irr}
Let $q=p^e$ be an odd prime power and let $(r,s,t)$ be a hyperbolic
triple of integers such that $\gcd(rst,p)=1$ or one of $r,s,t$ is
equal to $p$ and the two others are coprime to $p$. Then $(r,s,t)$
is \emph{irregular w.r.t $q$} if up to a permutation $(r',s',t')$ of
$(r,s,t)$ one of the following cases holds:

{\bf Case $(\al)$:} \begin{itemize}
\item $r',s',t' > 2$,
\item $r'$, $s'$ and $e=\mu_{\PSL}(p;r',s',t')$ are all even,
\item both $\mu_{\PGL}(p,r')$ and $\mu_{\PGL}(p,s')$ divide $\frac{e}{2}$,
\item both $\mu_{\PSL}(p,r')$ and $\mu_{\PSL}(p,s')$ do not divide $\frac{e}{2}$,
\item $\mu_{\PSL}(p,t')$ divides $\frac{e}{2}$.
\end{itemize}

{\bf Case $(\be)$:} \begin{itemize}
\item $r',s'> 2$ and $t'=2$,
\item $r'$, $s'$ and $e=\mu_{\PSL}(p;r',s')$ are all even,
\item both $\mu_{\PGL}(p,r')$ and $\mu_{\PGL}(p,s')$ divide $\frac{e}{2}$,
\item both $\mu_{\PSL}(p,r')$ and $\mu_{\PSL}(p,s')$ do not divide
$\frac{e}{2}$.
\end{itemize}

{\bf Case $(\ga)$:} \begin{itemize}
\item $r',s' > 2$, and $t'=2$,
\item $r'$ and $e=\mu_{\PSL}(p;r',s')$ are even,
\item $\mu_{\PGL}(p,r')$ divides $\frac{e}{2}$,
\item $\mu_{\PSL}(p,r')$ does not divide $\frac{e}{2}$,
\item $\mu_{\PSL}(p,s')$ divides $\frac{e}{2}$.
\end{itemize}
\end{lemma}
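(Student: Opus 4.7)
My strategy is to unpack \emph{irregular w.r.t.\ $q$} arithmetically via the $\mu$-invariants of Section~\ref{sect.TrOrd}, distinguish three cases according to how many of $r',s',t'$ equal $2$, and finally invoke Macbeath's classification to pin down the generated subgroup. Recall that by the definition in Section~\ref{sect.Mac}, $(r,s,t)$ is irregular w.r.t.\ $q=p^e$ iff there is a $G$-triple $(A,B,C)$ of orders $(r,s,t)$ generating a subfield subgroup isomorphic to $\PGL_2(q_1)$ with $q_1=p^{e/2}$ (so $e$ is forced to be even), such that exactly one of $A,B,C$ lies in $\PSL_2(q_1)$. Since $ABC=1$ and $[\PGL_2(q_1):\PSL_2(q_1)]=2$, the number of elements of $\{A,B,C\}$ outside $\PSL_2(q_1)$ is automatically even, so ``exactly one inside'' coincides with ``exactly two outside''.

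Next I would set up a dictionary between coset information and $\mu$-divisibilities. Directly from the definitions in \S\ref{sect.TrOrd}, $\PGL_2(q_1)$ contains an order-$n$ element iff $\mu_{\PGL}(p,n)\mid e/2$, and any such element lies in $\PSL_2(q_1)$ iff $\mu_{\PSL}(p,n)\mid e/2$. Consequently an element of order $n$ lies in $\PGL_2(q_1)\setminus\PSL_2(q_1)$ exactly when $\mu_{\PGL}(p,n)\mid e/2$ while $\mu_{\PSL}(p,n)\nmid e/2$; for these two invariants to differ, $n$ must be even and coprime to $p$. This already forces each order labelled ``outside'' to be even, and forces $e$ itself to be even. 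The three cases then correspond to the three possible coset patterns: Case $(\al)$ has all of $r',s',t'>2$ with $C$ inside and $A,B$ outside, yielding the five listed $\mu$-conditions; Case $(\be)$ has $t'=2$ with the involution $C$ inside (automatic for $q_1$ odd, so no further condition on $t'$) and $A,B$ outside; Case $(\ga)$ has $t'=2$ with $A$ outside and $B$ inside, the outsideness of the involution $C$ then forced by parity in the quotient (which in turn requires $r'$ even but not $s'$). Existence of a $G$-triple realising each configuration of orders and cosets follows from \cite[Theorem~1]{Mac} via a suitable choice of traces in $\mathbb{F}_{q_1^2}$, with the admissible traces described by Proposition~\ref{prop.Tr.q.n}.

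The main obstacle will be verifying that the group $\langle A,B\rangle$ is exactly $\PGL_2(q_1)$, and not one of the smaller subgroups of $G$ from Table~\ref{table.subgroups}: a \emph{small} subgroup (dihedral, $A_4$, $S_4$ or $A_5$), a \emph{structural} subgroup (detected by the Macbeath singular equation $\alpha^2+\beta^2+\gamma^2-\alpha\beta\gamma-4=0$), or a proper subfield subgroup $\PSL_2(p^f)$ or $\PGL_2(p^f)$ with $f\mid e/2$, $f<e/2$. To handle this I would follow Macbeath~\cite[\S9]{Mac} and \cite[Lemma~3.3]{LR1}: proper subfield subgroups are excluded by choosing the traces $\alpha,\beta,\gamma$ to genuinely lie in $\mathbb{F}_{q_1^2}\setminus\mathbb{F}_{q_1}$, which is possible at the minimal level $e/2=\lcm(\mu_{\PGL}(p,r'),\mu_{\PGL}(p,s'))$ built into the hypotheses; the singular locus and the finitely many trace configurations generating small subgroups are then avoided by a generic choice within $\Tr_{q_1^2}(r')\times\Tr_{q_1^2}(s')\times\Tr_{q_1^2}(t')$. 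The converse direction, that the arithmetic conditions are necessary, is then immediate from the dictionary established above applied to any witnessing $G$-triple.
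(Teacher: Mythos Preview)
The paper does not supply a proof of this lemma: it is quoted verbatim from \cite[Lemma~3.5]{LR1} and used as a black box. So there is no ``paper's own proof'' to compare against; your proposal is already more than what the paper contains.

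Your outline is a faithful reconstruction of how the argument in \cite{LR1} runs: translate ``$A$ lies in $\PGL_2(q_1)\setminus\PSL_2(q_1)$'' into the divisibility conditions $\mu_{\PGL}(p,|A|)\mid e/2$ and $\mu_{\PSL}(p,|A|)\nmid e/2$, use the index-$2$ parity to force exactly two outside, and then split according to where the possible involution sits. Two points deserve more care. First, your dictionary ``an element of order $n$ lies outside $\PSL_2(q_1)$ iff $\mu_{\PGL}\mid e/2$ and $\mu_{\PSL}\nmid e/2$'' is only unambiguous for $n>2$; for $n=2$ both cosets contain involutions, and this is exactly what separates cases $(\beta)$ and $(\gamma)$ --- so the sentence ``automatic for $q_1$ odd'' is misleading (what is automatic is that $\mu_{\PSL}(p,2)=1$ divides anything, not that the involution lands inside). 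Second, your claim that $e/2=\lcm(\mu_{\PGL}(p,r'),\mu_{\PGL}(p,s'))$ is ``built into the hypotheses'' is not literally what is assumed: the hypotheses give $e=\mu_{\PSL}(p;r',s',t')$ together with the four divisibility/non-divisibility conditions, and deducing minimality at level $e/2$ for the $\PGL$-invariants (needed to rule out a strictly smaller $\PGL_2(p^f)$) requires the extra step that $\mu_{\PSL}(p,n)\in\{\mu_{\PGL}(p,n),\,2\mu_{\PGL}(p,n)\}$. With those two refinements your sketch matches the Langer--Rosenberger argument.
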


Case $(\be)$ is the same as case $(\al)$ except that $t'=2$. Observe
that the difference between the last two cases is that an irregular
$G$-triple $(A,B,C)$ in case $(\be)$ contains an involution which
belongs to $\PSL_2(q_1)$, while in case $(\ga)$ the involution
belongs to $\PGL_2(q_1) \setminus \PSL_2(q_1)$. We will therefore
investigate in detail irregular triples containing involutions in
\S\ref{sect.inv}.

As an example, in Tables~\ref{table.Ex.triples.5.7.8.9}
and~\ref{table.Ex.triples.11.13} we present all the irregular
triples w.r.t $q^2$, for $q=3,5,7,9,11,13$, divided according to the
above cases. These triples were computed using \textsc{Magma}.

\subsection{Irregular triples containing involutions}\label{sect.inv}
In this section we consider irregular $G$-triples $(A,B,C)$ where
$C$ is an involution. We give a numerical criterion to decide
whether all the elements of even order in this triple are of the
same type, either split or non-split. Such triples, called
\emph{``good involuting triples w.r.t $q$''} (see
Definition~\ref{def.2.triple}), are needed in the classification of
Beauville structures for $G_1$ which include involutions, in
Theorem~\ref{thm.Beau.PGL2q}{\it (iv)}.

Recall that all the involutions in $G=\PSL_2(q)$ are conjugate to
the image of the matrix $\begin{pmatrix} 0 & 1 \\ -1 &
0\end{pmatrix}$. They are unipotent if $p=2$, split if $(q-1)/2$ is
even, namely if $q \equiv 1 \bmod 4$, and non-split if $(q+1)/2$ is
even, namely if $q \equiv 3 \bmod 4$. Moreover, if $q$ is odd, then
there is exactly one $G_1$-conjugacy class of involutions in $G_1
\setminus G$. They are split if $(q-1)/2$ is odd, namely if $q
\equiv 3 \bmod 4$, and non-split if $(q+1)/2$ is odd, namely if $q
\equiv 1 \bmod 4$ (see Tables~\ref{table.gp.elm}
and~\ref{table.elm.PSL}).

\begin{prop}\label{prop.2.triple}
Assume that $q$ is an odd prime power and let $(A,B,C)$ be an
irregular $\PSL_2(q^2)$-triple of respective orders $(r,s,2)$. Then
one of the following holds:
\begin{itemize}
\item $q \equiv 1 \bmod 4$, $C$ is split and $(r,s,2)$ is in case $(\be)$.
\item $q \equiv 1 \bmod 4$, $C$ is non-split and $(r,s,2)$ is in case $(\ga)$.
\item $q \equiv 3 \bmod 4$, $C$ is non-split and $(r,s,2)$ is in case $(\be)$.
\item $q \equiv 3 \bmod 4$, $C$ is split and $(r,s,2)$ is in case $(\ga)$.
\end{itemize}
\end{prop}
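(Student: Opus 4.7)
The plan is to combine two ingredients. The first is the description, recalled in the paragraph preceding the statement, that involutions in $\PSL_2(q)$ are split (resp.\ non-split) according as $q\equiv 1 \bmod 4$ (resp.\ $q\equiv 3 \bmod 4$), while involutions in $\PGL_2(q)\setminus\PSL_2(q)$ satisfy the opposite dichotomy. The second is the definition of an irregular triple from Section~\ref{sect.Mac}: the subgroup $\langle A,B\rangle \leq \PSL_2(q^2)$ is isomorphic to $\PGL_2(q)$, and exactly one of $A,B,C$ lies in its subgroup $\PSL_2(q)$. Once we identify to which coset of $\PSL_2(q)$ in $\PGL_2(q)$ the involution $C$ belongs, the type of $C$ is read off from $q\bmod 4$.

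The key intermediate observation I would record first is the following: an element $g\in \PGL_2(q)$ of order $n$ lies in $\PSL_2(q)$ if and only if $n$ is a $\PSL_2(q)$-order, equivalently $\mu_{\PSL}(p,n)$ divides $e/2$, where $q^2=p^e$. One direction is immediate from the definition of $\mu_{\PSL}$; for the other, note that $g$ is contained in a (split or non-split) maximal cyclic torus of $\PGL_2(q)$ of order $q\pm 1$, which meets $\PSL_2(q)$ in its unique subgroup of index two, namely the corresponding maximal torus of $\PSL_2(q)$ of order $(q\pm 1)/2$; since any cyclic group has a unique subgroup of each order, $g\in\PSL_2(q)$ is equivalent to $n\mid (q\pm 1)/2$.

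Now I would treat the two cases of Lemma~\ref{lem.irr} separately. In case $(\be)$, the defining conditions state that neither $\mu_{\PSL}(p,r)$ nor $\mu_{\PSL}(p,s)$ divides $e/2$, so by the observation above $A,B\notin\PSL_2(q)$; the ``exactly one'' property of irregular triples then forces $C\in\PSL_2(q)$. Applying the classification of involutions in $\PSL_2(q)$ yields the first and third bullets of the proposition. In case $(\ga)$, we have $\mu_{\PSL}(p,r)\nmid e/2$ but $\mu_{\PSL}(p,s)\mid e/2$, so $A\notin\PSL_2(q)$ while $B\in\PSL_2(q)$; again by the ``exactly one'' property, $C\notin \PSL_2(q)$. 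Applying the classification of involutions in $\PGL_2(q)\setminus\PSL_2(q)$ yields the second and fourth bullets.

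The only nontrivial step is the biconditional established in the second paragraph; everything else is bookkeeping with the hypotheses of Lemma~\ref{lem.irr} and the tabulated types of involutions. I do not expect any real obstacle beyond being careful with the parity of $(q\pm 1)/2$ in the case analysis.
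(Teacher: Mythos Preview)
Your argument is correct and matches the paper's: the paper declares the proposition immediate from the classification of involutions together with Lemma~\ref{lem.irr}, having already remarked after that lemma that in case $(\beta)$ the involution lies in $\PSL_2(q)$ while in case $(\gamma)$ it lies in $\PGL_2(q)\setminus\PSL_2(q)$---precisely the coset analysis you carry out. Your intermediate biconditional (membership in $\PSL_2(q)$ detected by $\mu_{\PSL}(p,n)\mid e/2$) is exactly the content of that remark; just note that your torus justification covers only semisimple elements and should be supplemented by the trivial observation that unipotents lie in $\PSL_2(q)$, since $s'=p$ can occur in case $(\gamma)$ (e.g.\ $(2,6,7)$ for $q=7$), and that the biconditional as literally stated fails for $n=2$ though you only invoke it for $r,s>2$.
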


\begin{proof}
Recall that by Lemma~\ref{lem.irr}, an irregular
$\PSL_2(q^2)$-triple $(A,B,C)$ of respective orders $(r,s,2)$ is in
case $(\be)$ if and only if $C \in \PSL_2(q)$, whereas it is in case
$(\ga)$ if and only if $C \in \PGL_2(q) \setminus \PSL_2(q)$ (up to
conjugation). The claim now follows from the above observation.
\end{proof}

\begin{corr}\label{cor.rs2}
Assume that $q$ is an odd prime power and let $(A,B,C)$ be an
irregular $\PSL_2(q^2)$-triple of respective orders $(r,s,2)$ with
$r>2$ even.

If $q \equiv 1 \bmod 4$, then
\begin{itemize}
\item Both $A$ and $C$ are split if and only if $r$ divides $q-1$ and
$(r,s,2)$ is in case $(\be)$.
\item Both $A$ and $C$ are non-split if and
only if $r$ divides $q+1$ and $(r,s,2)$ is in case $(\ga)$.
\end{itemize}

If $q \equiv 3 \bmod 4$, then
\begin{itemize}
\item Both $A$ and $C$ are split if and only if $r$ divides $q-1$ and
$(r,s,2)$ is in case $(\ga)$.
\item Both $A$ and $C$ are non-split if and
only if $r$ divides $q+1$ and $(r,s,2)$ is in case $(\be)$.
\end{itemize}
\end{corr}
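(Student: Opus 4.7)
The plan is to deduce the corollary as an immediate consequence of Proposition~\ref{prop.2.triple}, combined with the elementary classification of non-unipotent elements in $\PGL_2(q)$ by the divisibility properties of their orders.

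First I would observe that since $(A,B,C)$ is an irregular $\PSL_2(q^2)$-triple with $q$ odd, the subgroup $\langle A, B\rangle$ is isomorphic to $\PGL_2(q)$, so both $A$ and $C$ lie in this $\PGL_2(q)$. Because $r>2$ is even and $p$ is odd, $A$ cannot be unipotent, so it must be either split or non-split. By Table~\ref{table.gp.elm}, $A$ is split if and only if $r$ divides $q-1$, and $A$ is non-split if and only if $r$ divides $q+1$.

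Next I would invoke Proposition~\ref{prop.2.triple} to read off the type of the involution $C$: if $q \equiv 1 \bmod 4$ then $C$ is split in case $(\be)$ and non-split in case $(\ga)$, whereas if $q \equiv 3 \bmod 4$ then $C$ is non-split in case $(\be)$ and split in case $(\ga)$.

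Finally I would combine these two observations in each of the four scenarios. For instance, when $q \equiv 1 \bmod 4$ the pair $(A,C)$ is split--split precisely when $r \mid q-1$ and we are in case $(\be)$, and non-split--non-split precisely when $r \mid q+1$ and we are in case $(\ga)$; the $q \equiv 3 \bmod 4$ situation is completely analogous, with the roles of $(\be)$ and $(\ga)$ swapped. There is no real obstacle here, since all the substantive content is already packaged in Proposition~\ref{prop.2.triple} and the remaining work is the short bookkeeping argument just sketched.
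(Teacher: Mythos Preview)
Your proposal is correct and matches the paper's approach: the paper states that the corollary follows immediately from the preceding observation about involutions together with Lemma~\ref{lem.irr}, and your argument does exactly this, using Proposition~\ref{prop.2.triple} (which packages that observation) for the type of $C$ and the order-divisibility criterion from Table~\ref{table.gp.elm} for the type of $A$.
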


The proof of part~{\it (iv)} of Theorem~\ref{thm.Beau.PGL2q} relies
on the following corollary which is a consequence of
Corollary~\ref{cor.rs2}. Recall that the notion of a \emph{``good
involuting triple w.r.t $q$''} was given in
Definition~\ref{def.2.triple}.

\begin{corr}\label{cor.2.triple}
Let $q$ be an odd prime power and let $(A,B,C)$ be an irregular
$\PSL_2(q^2)$-triple of respective orders $(r,s,2)$. Then all the
elements of even order in this triple are of the same type (either
split or non-split) if and only if $(r,s,2)$ is a \emph{good
involuting triple w.r.t $q$} (see Definition~\ref{def.2.triple}).
\end{corr}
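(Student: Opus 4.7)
The plan is to deduce Corollary~\ref{cor.2.triple} directly from Corollary~\ref{cor.rs2} together with Definition~\ref{def.2.triple}, by treating the condition \emph{all even-order elements have the same type} as a conjunction of element-wise conditions that Corollary~\ref{cor.rs2} already characterises.

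First I would observe that the elements of even order in $(A,B,C)$ are exactly $C$ together with those of $A,B$ whose order lies in $\cE$. Since $(A,B,C)$ is an irregular $\PSL_2(q^2)$-triple with $|C|=2$, Lemma~\ref{lem.irr} forces $(r,s,2)$ to fall into case $(\be)$ or case $(\ga)$; in case $(\be)$ both $r$ and $s$ are even so $\cE=\{r,s\}$, while in case $(\ga)$ the distinguished ``$r'$'' is guaranteed even but the other member may or may not be, so $\cE$ may consist of one or two elements. In particular $\cE$ is never empty, so there is something to check.

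Next, for each $n \in \cE$ I would apply Corollary~\ref{cor.rs2} to the pair consisting of the element of order $n$ and the involution $C$, using the symmetry between the two non-involution positions in the hypotheses. The corollary translates \emph{``the element of order $n$ and $C$ are both split''} into \emph{``$n \mid q-1$ and $(r,s,2)$ lies in the case prescribed by $q \bmod 4$''}, and translates \emph{``both are non-split''} into \emph{``$n \mid q+1$ with the complementary case prescription''}. The four resulting sub-cases, indexed by $q \bmod 4 \in \{1,3\}$ and the split/non-split dichotomy, are exactly the four bullet points of Definition~\ref{def.2.triple}.

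Finally, I would combine these element-wise conditions. Requiring all elements of even order to be of the same type amounts to making a single split-or-non-split choice that must then hold for every $n \in \cE$. The case-label ($(\be)$ vs.\ $(\ga)$) is a property of the triple itself, independent of $n$, and is already fixed by the type of $C$ via Proposition~\ref{prop.2.triple}; so the only remaining constraint is either uniform $n \mid q-1$ for all $n \in \cE$, or uniform $n \mid q+1$ for all $n \in \cE$. This is precisely Definition~\ref{def.2.triple}, yielding the equivalence in both directions.

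The main (mild) obstacle I anticipate is bookkeeping: in case $(\ga)$ with the non-distinguished member of $\{r,s\}$ odd, one must check that no hidden constraint on that odd member is silently imposed by the ``uniform type'' requirement. Since the odd order lies outside $\cE$, Corollary~\ref{cor.rs2} says nothing about it, and the equivalence goes through cleanly.
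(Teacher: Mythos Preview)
Your proposal is correct and matches the paper's approach: the paper gives no separate proof, simply declaring Corollary~\ref{cor.2.triple} to be a reformulation of Corollary~\ref{cor.rs2}, which is precisely what you spell out. The symmetry you invoke to apply Corollary~\ref{cor.rs2} to $B$ as well as to $A$ is justified by passing to the triple $(B^{-1},A^{-1},C^{-1})$, which is again an irregular $\PSL_2(q^2)$-triple of respective orders $(s,r,2)$.
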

\begin{proof}
Again, let $G=\PSL_2(q)$ and $G_1=\PGL_2(q)$. Without loss of
generality we may assume that $A \in G_1\setminus G$ and thus $r>2$
is even. By Corollary~\ref{cor.rs2} one of the following necessarily
holds:
\begin{itemize}
\item $q \equiv 1 \bmod 4$, $r$ divides $q-1$ and $(r,s,2)$ is in case $(\be)$.
In case $(\be)$, both $A,B \in G_1 \setminus G$, and so $s$ is also
even. Since $A,B$ are split, then $r,s$ both divide $q-1$ but not
$(q-1)/2$.

\item $q \equiv 1 \bmod 4$, $r$ divides $q+1$ and $(r,s,2)$ is in case $(\ga)$.
Since $A \in G_1\setminus G$ then $r$ does not divide $(q+1)/2$. In
case $(\ga)$, $B \in G$, and so $s$ is a $G$-order. If $s$ is even
then $s$ divides $(q-1)/2$, and so $B$ is split. But $A$ is
non-split, yielding a contradiction. Hence, $s$ is necessarily odd.

\item $q \equiv 3 \bmod 4$, $r$ divides $q-1$ and $(r,s,2)$ is in case $(\ga)$.
Since $A \in G_1\setminus G$ then $r$ does not divide $(q-1)/2$. In
case $(\ga)$, $B \in G$, and so $s$ is a $G$-order. If $s$ is even
then $s$ divides $(q+1)/2$, and so $B$ is non-split. But $A$ is
split, yielding a contradiction. Hence, $s$ is necessarily odd.

\item $q \equiv 3 \bmod 4$, $r$ divides $q+1$ and $(r,s,2)$ is in
case $(\be)$. In case $(\be)$, both $A,B \in G_1 \setminus G$, and
so $s$ is also even. Since $A,B$ are non-split, then $r,s$ both
divide $q+1$ but not $(q+1)/2$.
\end{itemize}
\end{proof}

As an example, in Tables~\ref{table.Ex.triples.5.7.8.9}
and~\ref{table.Ex.triples.11.13} we mark in {\bf bold} all the
\emph{good involuting triples w.r.t $q$} where $q=5,7,9,11,13$.
These triples were computed using \textsc{Magma}.

\begin{example}
Table~\ref{table.good.inv.triples} presents some general examples
for \emph{good involuting triples w.r.t $q$}.

\begin{table}[h]
\begin{tabular} {|cc|c|c|c|}
$q$ & & case & elements of even order & good involuting triple w.r.t $q$ \\
\hline \hline
$q \equiv 1 \bmod 4$ & $q>5$ & $(\be)$ & split & $(2,q-1,q-1)$ \\
\hline
$q \equiv 1 \bmod 4$ & $q \geq 5$ & $(\ga)$ & non-split & $(2,p,q+1)$ \\
& $q>5$ & & & $(2,(q+1)/2,(q+1))$\\
\hline
$q \equiv 3 \bmod 4$ & $q \geq 7$ & $(\ga)$ & split & $(2,p,q-1)$ \\
& $q > 7$ &  &  & $(2,(q-1)/2,q-1)$ \\
\hline
$q \equiv 3 \bmod 4$ & $q \geq 7$ & $(\be)$ & non-split & $(2,q+1,q+1)$ \\
\hline
\end{tabular}
\caption{Good involuting triples w.r.t $q$}
\label{table.good.inv.triples}
\end{table}
Note that in case $(\be)$ we exclude $q=5$ since the triple $(2,4,4)$ is not hyperbolic,
and in case $(\ga)$ the second triple is excluded when $q=5$ or $7$
since the triple $(2,3,6)$ is not hyperbolic.
\end{example}

\subsection{Generating triples containing unipotents}\label{sect.unip}
In this section we consider $G$-triples $(A,B,C)$ where $A$ and $B$
are unipotent elements and $C$ is not unipotent. We give a numerical
criterion on the order of $C$ to decide whether $A$ is $G$-conjugate
to $B$, which is called a \emph{``good $G$-order''} (see
Definition~\ref{defn.good.G.order}). Such triples are needed in the
classification of Beauville structures for $G$ which include
unipotents, in Theorem~\ref{thm.Beau.PSL2q}{\it (ii)}.

Assume that $q$ is odd and consider the following matrices in
$G_0=\SL_2(q)$:
\[
    U_1 = \begin{pmatrix} 1 & 1 \\ 0 & 1 \end{pmatrix}, \quad
    U_{-1} = \begin{pmatrix} -1 & 1 \\ 0 & -1 \end{pmatrix},
\]
\[
    U'_1 = XU_1X^{-1} = \begin{pmatrix} 1 & x^2 \\ 0 & 1 \end{pmatrix} \in
    G_0, \quad
    U'_{-1} = XU_{-1}X^{-1} = \begin{pmatrix} -1 & x^2 \\ 0 & -1 \end{pmatrix} \in
    G_0,
\]
where $x \in \mathbb{F}_{q^2} \setminus \mathbb{F}_q$ satisfies that
$x^2 \in \mathbb{F}_q$ and $X=\begin{pmatrix} x & 0 \\ 0 & x^{-1}
\end{pmatrix} \in \SL_2(q^2)$.

The following proposition and its corollary are immediate
observations.

\begin{prop}\label{prop.unip.G0}
Assume that $q$ is odd. Then, for any $A \in G_0$, $XAX^{-1} \in
G_0$. Moreover,
\begin{itemize}
\item If $A \ne I$ and $\tr(A)=2$ then $A$ is $G_0$-conjugate to either $U_1$ or $U'_1$.
\item If $A \ne -I$ and $\tr(A)=-2$ then $A$ is $G_0$-conjugate to either $U_{-1}$ or $U'_{-1}$.
\end{itemize}
\end{prop}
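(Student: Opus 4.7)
The plan is to split the proposition into its two claims and settle each by direct matrix computations, with the only real subtlety being to keep the conjugating elements inside $G_0 = \SL_2(q)$.

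For the first claim, writing $A = \begin{pmatrix} a & b \\ c & d\end{pmatrix} \in G_0$ and using that $X = \mathrm{diag}(x, x^{-1})$, I compute
\[
    XAX^{-1} = \begin{pmatrix} a & x^2 b \\ x^{-2} c & d\end{pmatrix}.
\]
Since $x^2 \in \mathbb{F}_q$ forces also $x^{-2} = (x^2)^{-1} \in \mathbb{F}_q$, this matrix has entries in $\mathbb{F}_q$, and as conjugation preserves the determinant it lies in $\SL_2(q) = G_0$.

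For the second claim, suppose $A \in G_0$ satisfies $\tr(A)=2$ and $A \neq I$. The characteristic polynomial of $A$ is $(\la-1)^2$, so by Cayley--Hamilton $(A-I)^2 = 0$; and since $A \neq I$, the map $A-I$ is nilpotent of rank one, giving $\ker(A-I) = \mathrm{Im}(A-I)$ as a line in $\mathbb{F}_q^2$. I pick a non-zero $v$ on this line together with a vector $w$ satisfying $(A-I)w=v$; then $\{v,w\}$ is a basis of $\mathbb{F}_q^2$ with respect to which $A$ is represented by $U_1$. Setting $P = [v\,|\,w] \in \GL_2(\mathbb{F}_q)$ and $\de = \det P \in \mathbb{F}_q^\ast$, I then replace $P$ by $P' = P\cdot\mathrm{diag}(\de^{-1}, 1) \in \SL_2(q)$; a short computation shows $P'^{-1} A P' = \begin{pmatrix} 1 & \de \\ 0 & 1 \end{pmatrix}$. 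Now conjugation by $\mathrm{diag}(a, a^{-1}) \in G_0$ multiplies the $(1,2)$-entry by $a^2$, so inside $G_0$ the conjugacy class of $\begin{pmatrix} 1 & \de \\ 0 & 1 \end{pmatrix}$ depends only on the image of $\de$ in $\mathbb{F}_q^\ast/(\mathbb{F}_q^\ast)^2$. Since $x \notin \mathbb{F}_q$ forces $x^2$ to be a non-square in $\mathbb{F}_q^\ast$, the two possibilities yield $G_0$-conjugacy to $U_1$ (when $\de$ is a square) or to $U'_1$ (when $\de$ is a non-square).

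The case $\tr(A) = -2$ with $A \neq -I$ is handled by the same argument applied to $A + I$ in place of $A - I$: I choose $v$ non-zero in $\ker(A+I)$ and $w$ with $(A+I)w = v$, so that $A$ acts on the basis $\{v,w\}$ as $U_{-1}$; the identical $\SL_2(q)$-adjustment of the change-of-basis matrix then produces the conjugate $\begin{pmatrix} -1 & \de \\ 0 & -1 \end{pmatrix}$, which is $G_0$-conjugate to $U_{-1}$ or to $U'_{-1}$ according to whether $\de$ is a square in $\mathbb{F}_q^\ast$ or not. There is no real obstacle here; the only point requiring care is the bookkeeping that keeps the change of basis in $\SL_2(q)$ rather than merely in $\GL_2(q)$, and is carried out by absorbing the determinant into the $(1,2)$-entry as above.
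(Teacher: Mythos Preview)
Your argument is correct. The paper itself gives no proof of this proposition, introducing it (together with its corollary) as ``immediate observations''; what you have written is a clean and careful unpacking of exactly the standard linear-algebra reasoning that makes it immediate. The only points worth noting are that your justification that $x^2$ is a non-square in $\mathbb{F}_q^\ast$ (because $x^2=y^2$ with $y\in\mathbb{F}_q$ would force $x=\pm y\in\mathbb{F}_q$) is the one fact the reader might not see instantly, and that your reduction to $\SL_2$-conjugacy via $P'=P\cdot\mathrm{diag}(\de^{-1},1)$ is precisely the bookkeeping the paper is suppressing. There is nothing to contrast methodologically: you have simply supplied the omitted details.
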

\begin{proof}
Indeed, if $A=\begin{pmatrix} a & b \\ c & d \end{pmatrix} \in G_0$
then $XAX^{-1} = \begin{pmatrix} a & bx^2 \\ cx^{-2} & d \end{pmatrix} \in G_0$.
Moreover, $U_1$ is not $G_0$-conjugate to $U'_1=XU_1X^{-1}$,
since $x^2$ is not a square of some element in $\mathbb{F}_q$.
Hence, any $I \ne A \in G_0$ with $\tr(A)=2$ is $G_0$-conjugate to either $U_1$ or $U'_1$
(see Table~\ref{table.elm.PSL}).
\end{proof}

\begin{corr}\label{corr.unip.G}
Assume that $q$ is odd. Then, for any $\bar{A} \in G$,
$\bar{X}\bar{A}\bar{X}^{-1} \in G$. If, moreover, $\bar{A}$ is
unipotent then it is $G$-conjugate to either $\bar{U}_1$ or
$\bar{U'}_1=\bar{X}\bar{U}_1\bar{X}^{-1}$. In addition,
\begin{itemize}
\item If $q \equiv 1 \bmod 4$ then $\bar{U}_1$ and $\bar{U}_{-1}$ are $G$-conjugate.
\item If $q \equiv 3 \bmod 4$ then $\bar{U}_1$ and $\bar{U'}_{-1}$ are $G$-conjugate.
\end{itemize}
\end{corr}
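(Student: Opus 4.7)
The plan is to verify the three assertions of the corollary by direct computation, using Proposition~\ref{prop.unip.G0} together with the standard fact that two upper-triangular unipotent matrices $V_a := \left(\begin{smallmatrix} 1 & a \\ 0 & 1 \end{smallmatrix}\right)$ and $V_b$ in $\SL_2(q)$ are $G_0$-conjugate if and only if $a/b$ is a square in $\mathbb{F}_q^*$; this is verified by conjugating with $\operatorname{diag}(\alpha,\alpha^{-1})$, which sends $V_a$ to $V_{\alpha^2 a}$, and by observing that an arbitrary conjugator $h$ is forced to be diagonal as soon as one requires the lower-left entry of $hV_a h^{-1}$ to vanish.

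First I would verify the opening claim by direct computation: for $A = \left(\begin{smallmatrix} a & b \\ c & d \end{smallmatrix}\right) \in G_0$, one has $XAX^{-1} = \left(\begin{smallmatrix} a & x^2 b \\ x^{-2} c & d \end{smallmatrix}\right)$, which lies in $G_0 = \SL_2(q)$ because $x^2 \in \mathbb{F}_q^*$; reducing modulo $\{\pm I\}$ then yields $\bar{X}\bar{A}\bar{X}^{-1} \in G$.

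Next, if $\bar{A} \in G$ is unipotent, then any preimage $A \in G_0$ satisfies $A \ne \pm I$ and $\tr(A) = \pm 2$, so Proposition~\ref{prop.unip.G0} shows that $\bar{A}$ is $G$-conjugate to one of $\bar{U}_1,\bar{U}_{-1},\bar{U'}_1,\bar{U'}_{-1}$. To deduce that only two $G$-classes survive, I would compare these four classes pairwise. Writing $-U_{-1} = V_{-1}$, $U_1 = V_1$, $U'_1 = V_{x^2}$, and $-U'_{-1} = V_{-x^2}$, and recalling that $x^2$ is a non-square in $\mathbb{F}_q^*$ (since $x \in \mathbb{F}_{q^2}\setminus \mathbb{F}_q$ satisfies $x^2 \in \mathbb{F}_q$), the $G_0$-conjugacy criterion yields: $\bar{U}_1$ and $\bar{U}_{-1}$ define the same $G$-class iff $V_1 \sim_{G_0} V_{-1}$, iff $-1$ is a square in $\mathbb{F}_q^*$, iff $q \equiv 1 \bmod 4$; and $\bar{U}_1$ and $\bar{U'}_{-1}$ define the same $G$-class iff $V_1 \sim_{G_0} V_{-x^2}$, iff $-x^2$ is a square in $\mathbb{F}_q^*$, iff $-1$ is a non-square (since $x^2$ is a non-square), iff $q \equiv 3 \bmod 4$. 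Because the traces of $U_1,U'_1$ differ from those of $U_{-1},U'_{-1}$, the comparison between these $G$-classes can only be made after multiplying by $-I$, which is why the quadratic character of $-1$ in $\mathbb{F}_q^*$ controls the collapsing.

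Combining this with the parallel identifications $\bar{U'}_1 = \bar{U'}_{-1}$ when $q \equiv 1 \bmod 4$, and $\bar{U'}_1 = \bar{U}_{-1}$ when $q \equiv 3 \bmod 4$ (obtained by the identical computation, now relating $V_{x^2}$ to $V_{\pm x^2}$ or $V_{-1}$), we conclude that every unipotent element of $G$ is $G$-conjugate to $\bar{U}_1$ or $\bar{U'}_1$, and the stated equalities of $G$-classes hold. The proof is entirely computational; the only thing requiring care is the bookkeeping of traces and square classes in $\mathbb{F}_q^*$, and no serious obstacle arises.
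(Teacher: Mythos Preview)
Your argument is correct and is precisely the computation the paper is tacitly invoking: the corollary is stated without proof as an ``immediate observation'' following Proposition~\ref{prop.unip.G0}, and you have supplied exactly the square-class bookkeeping needed to collapse the four $G_0$-classes $U_1,U'_1,U_{-1},U'_{-1}$ to two $G$-classes. One tiny imprecision: a conjugator $h$ carrying $V_a$ to another upper unitriangular matrix is forced to be \emph{upper triangular}, not diagonal; but since the off-diagonal entry of $h$ cancels in the conjugation (so $hV_ah^{-1}=V_{\alpha^2 a}$ regardless), your conclusion $V_a\sim_{G_0}V_b\iff a/b\in(\mathbb{F}_q^*)^2$ is unaffected.
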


The following lemma is needed later in Section~\ref{sect.conj},
to decide whether a unipotent element $\bar{A} \in G$ is $G$-conjugate
to some power of $\bar{U}_1$ or not.

\begin{lemma}\label{lem.unip.i}
Assume that $p$ is odd and let $q=p^e$. Let $\bar{A} \in G$ be a
unipotent element.
\begin{itemize}
\item If $e$ is odd, then there exists some $0 < i < p$ such that
$\bar{A}^i$ is $G$-conjugate to $\bar{U}_1$.
\item If $e$ is even, then for every $0 < i < p$,
$\bar{A}^i$ is $G$-conjugate to $\bar{A}$.
\end{itemize}
\end{lemma}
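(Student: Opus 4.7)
The plan is to reduce the lemma to a $G$-conjugacy question about the upper unipotent matrices $U_a = \bigl(\begin{smallmatrix}1 & a \\ 0 & 1\end{smallmatrix}\bigr)$ and then resolve that question through a parity computation in $\mathbb{F}_q^*$. Starting from Corollary~\ref{corr.unip.G}, I may assume $\bar{A}$ is $G$-conjugate to $\bar{U}_1$ or to $\bar{U'}_1 = \bar{U}_{x^2}$; note that $x^2 \in \mathbb{F}_q^*$ is necessarily a non-square, since otherwise $x$ would already lie in $\mathbb{F}_q$. Because $U_a^i = U_{ia}$, both statements of the lemma amount to asking when $\bar{U}_a$ and $\bar{U}_b$ are $G$-conjugate.

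The first step is the conjugacy criterion: $\bar{U}_a \sim_G \bar{U}_b$ if and only if $b/a \in (\mathbb{F}_q^*)^2$. Because the traces $2$ and $-2$ differ, any conjugator $g \in \SL_2(q)$ must realise $g U_a g^{-1} = U_b$ (and not $-U_b$); such a $g$ fixes the common fixed point $\infty$ of $U_a$ and $U_b$ on $\mathbb{P}^1(q)$, hence is upper triangular, and the direct computation with $g = \bigl(\begin{smallmatrix}s & r \\ 0 & s^{-1}\end{smallmatrix}\bigr)$ yields $g U_a g^{-1} = U_{as^2}$.

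The main step, and the one I expect to be the crux, is an arithmetic determination of when $0 < i < p$ is a square in $\mathbb{F}_q^*$. Using the factorisation
\[
\frac{q-1}{2} = \frac{p-1}{2}\cdot\bigl(1 + p + p^2 + \cdots + p^{e-1}\bigr)
\]
together with $i^{p-1} = 1$, I obtain
\[
i^{(q-1)/2} = \bigl(i^{(p-1)/2}\bigr)^{1 + p + \cdots + p^{e-1}},
\]
whose base is $\pm 1$ according to whether $i$ is a square in $\mathbb{F}_p^*$, and whose exponent is a sum of $e$ odd integers and therefore has the same parity as $e$. Consequently, when $e$ is even every $i \in \mathbb{F}_p^*$ is a square in $\mathbb{F}_q^*$, whereas when $e$ is odd the squares in $\mathbb{F}_q^*$ among $\{1,\dots,p-1\}$ coincide with the squares in $\mathbb{F}_p^*$. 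This is exactly where the parity of $e$ enters the statement of the lemma.

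Finally I would combine the two steps. If $e$ is even then for every $0 < i < p$ the conjugacy criterion applied with $(a,b) = (1,i)$ or $(a,b) = (x^2, ix^2)$ gives $\bar{A}^i \sim_G \bar{A}$. If $e$ is odd, I take $i = 1$ when $\bar{A} \sim_G \bar{U}_1$; when instead $\bar{A} \sim_G \bar{U}_{x^2}$ I pick any $i \in \{1,\dots,p-1\}$ which is a non-square in $\mathbb{F}_p^*$ (possible as $p \geq 3$), so that $ix^2$ is a product of two non-squares and hence a square in $\mathbb{F}_q^*$, and the criterion gives $\bar{A}^i = \bar{U}_{ix^2} \sim_G \bar{U}_1$, as required.
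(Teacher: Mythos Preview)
Your proof is correct and follows essentially the same approach as the paper: both reduce to the observation that $\bar{U}_a \sim_G \bar{U}_b$ if and only if $b/a$ is a square in $\mathbb{F}_q^*$, and then determine which of the integers $1,\dots,p-1$ are squares in $\mathbb{F}_q^*$ via the parity of $e$. Your version is more explicit than the paper's terse argument (you justify the conjugacy criterion and the square-determination in detail), and the only slip is writing ``$\bar{A}^i = \bar{U}_{ix^2}$'' in the last line where you mean ``$\bar{A}^i \sim_G \bar{U}_{ix^2}$''.
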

\begin{proof}
Consider the set $I=\{ i: 0< i <p \}$. Observe that if $p$ is odd
and $e$ is even then all the elements in $I$ are squares in
$\mathbb{F}_q$. If $p$ is odd and $e$ is odd, then half of the
elements in $I$ are squares in $\mathbb{F}_q$ and half are
non-squares.

Hence, if $e$ is odd then there exists some $i \in I$ such that
$U_1^i$ is $G_0$-conjugate to $U'_1$ and so $\bar{U}_1^i$ is
$G$-conjugate to $\bar{U'}_1$. If $e$ is even then for every $i \in
I$, $U_1^i$ is $G_0$-conjugate to $U_1$ and so $\bar{U}_1^i$ is
$G$-conjugate to $\bar{U}_1$.
\end{proof}

We now consider $G$-triples $(A,B,C)$ such that $A$ and $B$ are
unipotent elements and $C$ is not unipotent.

\begin{prop}\label{prop.G0.unip.triple}
Assume that $q$ is odd. Let $(A,B,C)$ be a $G_0$-triple such that
$A,B \ne \pm I$ and $\tr(A),\tr(B)\in \{\pm 2\}$. Denote
$\ga=\tr(C)$.
\begin{enumerate}
\item If $\tr(A)=\tr(B)$, then $A$ is $G_0$-conjugate to $B$ if and
only if $2-\ga$ is a square in $\mathbb{F}_q$.
\item If $\tr(A)=-\tr(B)$, then $A$ is $G_0$-conjugate to $-B$ if and
only if $2+\ga$ is a square in $\mathbb{F}_q$.
\end{enumerate}
\end{prop}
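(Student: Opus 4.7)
I would prove both parts by an explicit matrix computation after a $G_0$-normalization step, combined with a square-class invariant that separates the two $G_0$-conjugacy classes of non-identity elements of trace $2\epsilon$. By simultaneously conjugating the triple, I may place $A$ in the normal form $A=\bigl(\begin{smallmatrix}\epsilon_A&a\\0&\epsilon_A\end{smallmatrix}\bigr)$ with $\epsilon_A:=\tr(A)/2\in\{\pm 1\}$ and $a\in\mathbb{F}_q^*$, and write $B=\epsilon_B I+N$ with $\epsilon_B:=\tr(B)/2$ and $N=\bigl(\begin{smallmatrix}x&y\\z&-x\end{smallmatrix}\bigr)$ satisfying $x^2+yz=0$ (because $\det B=1$). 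A direct $2\times 2$ computation gives $\tr(AB)=2\epsilon_A\epsilon_B+\epsilon_A az$; since $\ga=\tr(C)=\tr((AB)^{-1})=\tr(AB)$, I obtain the key trace identity $\epsilon_A az=\ga-2\epsilon_A\epsilon_B$. In case~(1) ($\epsilon_A=\epsilon_B$) this collapses to $az=\ga-2$, and in case~(2) ($\epsilon_A=-\epsilon_B$) to $\epsilon_A az=\ga+2$.

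To distinguish the two $G_0$-classes, I would introduce the invariant
\[
\iota(M)=\det\bigl(v,\,(M-\epsilon I)v\bigr)\pmod{(\mathbb{F}_q^*)^2}
\]
defined for $M\ne\epsilon I$ with $\tr(M)=2\epsilon$ and any $v\notin\ker(M-\epsilon I)$. Its well-definedness modulo squares uses that $M-\epsilon I$ is a rank-$1$ nilpotent whose image equals its kernel (so shifting $v$ by a kernel vector does not affect the determinant, while rescaling $v$ by $\mu$ changes it by $\mu^2$). Its $G_0$-invariance follows from the $\SL_2(q)$-invariance of the symplectic form $\det(\cdot,\cdot)$. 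Evaluating $\iota$ on the representatives $U_{\pm 1}$ and $U'_{\pm 1}$ of Proposition~\ref{prop.unip.G0} (whose top-right entries are $1$ and the non-square $x^2$ respectively) shows that both square classes are attained, so $\iota$ is a complete invariant of the two $G_0$-conjugacy classes.

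For the normal forms above I compute $\iota(A)=-a$ (using $v=e_2$) and $\iota(B)=z$, $\iota(-B)=-z$ (using $v=e_1$ generically, with the degenerate sub-case $z=x=0$ handled by $v=e_2$ and consistent with the trace identity). Consequently, in case~(1) $A\sim_{G_0}B$ iff $-a\equiv z\pmod{\mathrm{squares}}$, i.e.\ iff $-az\in(\mathbb{F}_q^*)^2$, which by the trace identity is exactly ``$2-\ga$ is a square''. In case~(2), comparing $A$ with $-B$ (of the same trace sign as $A$), $A\sim_{G_0}(-B)$ iff $-a\equiv -z\pmod{\mathrm{squares}}$, i.e.\ iff $az\in(\mathbb{F}_q^*)^2$, which is ``$2+\ga$ is a square''. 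The main technical step is the construction and verification of $\iota$ as a complete $G_0$-conjugacy invariant, together with the sign bookkeeping needed to fold the sub-cases $\epsilon_A=\pm 1$ uniformly into the two statements of the proposition.
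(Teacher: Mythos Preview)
Your approach is correct and takes a genuinely different route from the paper. The paper normalizes to $A=U_1$ and writes $B$ explicitly as a conjugate $MU_1M^{-1}$, $MU'_1M^{-1}$, $M(-U_1)M^{-1}$, or $M(-U'_1)M^{-1}$, computing $\tr(AB)$ in each of the four cases to read off directly that $2\mp\gamma$ equals $c^2$ or $x^2c^2$ (with $c$ the lower-left entry of $M$). You instead keep $A$ and $B$ in parallel normal forms and introduce the square-class invariant $\iota(M)=\det\bigl(v,(M-\epsilon I)v\bigr)\bmod(\mathbb{F}_q^*)^2$, then use a single trace identity to relate the product of $\iota(A)$ and $\iota(B)$ to $2\mp\gamma$. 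The paper's computation is shorter and entirely elementary; your invariant is more conceptual, treats the four sign combinations $(\epsilon_A,\epsilon_B)\in\{\pm1\}^2$ uniformly, and makes transparent \emph{why} the square class of $2\mp\gamma$ is the relevant obstruction.

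One computational slip to fix: the trace is $\tr(AB)=2\epsilon_A\epsilon_B+az$, not $2\epsilon_A\epsilon_B+\epsilon_A az$ (the $(1,1)$-entry of $AB$ is $\epsilon_A(\epsilon_B+x)+az$). With the correct formula your case~(1) identity $az=\gamma-2$ is immediate for either sign of $\epsilon_A$, and case~(2) becomes $az=\gamma+2$ (no stray $\epsilon_A$), after which the invariant comparison goes through uniformly; as you wrote it, the case~(2) conclusion would fail when $\epsilon_A=-1$ and $-1$ is a non-square in $\mathbb{F}_q$. Finally, note that both your argument and the paper's tacitly exclude the degenerate situation $z=0$ (equivalently $\gamma=\pm2$, i.e.\ $\bar C$ unipotent or trivial): the statement as phrased is actually false there, but this is harmless since the proposition is only ever applied downstream with $t=|\bar C|\ne p$.
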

\begin{proof}
Without loss of generality we may assume that $A=U_1$.

\begin{enumerate}
\item If $B=MU_1M^{-1}$ for some matrix $M=\begin{pmatrix} a & b \\ c
& d \end{pmatrix} \in G_0$, then
\[
    \ga = \tr(C) = \tr(AB) = \tr(U_1MU_1M^{-1}) =
    \tr \begin{pmatrix}  1-ac-c^2 & 1+ac+a^2 \\ -c^2 & 1+ac \end{pmatrix} = 2-c^2,
\]
and so $2-\ga$ is a square in $\mathbb{F}_q$.

\item[] If $B=MU'_1M^{-1}$ for some matrix $M \in G_0$, then
\[
    \ga = \tr(C) = \tr(AB) = \tr(U_1MU'_1M^{-1}) = 2-x^2c^2,
\]
and since $x \in \mathbb{F}_{q^2} \setminus \mathbb{F}_q$ then
$2-\ga$ is a non-square in $\mathbb{F}_q$.

\item Similarly, if $B=M(-U_{1})M^{-1}$ for some matrix $M \in G_0$,
then
\[
    \ga = \tr(C) = \tr(AB) = \tr(U_1M(-U_{1})M^{-1}) = -2+c^2,
\]
and so $2+\ga$ is a square in $\mathbb{F}_q$.

\item[] If $B=M(-U'_{1})M^{-1}$ for some matrix $M \in G_0$, then
\[
    \ga = \tr(C) = \tr(AB) = \tr(U_1M(-U'_{1})M^{-1}) = -2+x^2c^2,
\]
and so $2+\ga$ is a non-square in $\mathbb{F}_q$.
\end{enumerate}
\end{proof}

Therefore, in order to decide whether in a $G$-triple $(A,B,C)$ of
respective orders $(p,p,t)$, $t \ne p$, $A$ is $G$-conjugate to $B$,
one needs to determine whether for $\ga \in \Tr_q(t)$, $2-\ga$ or
$2+\ga$ is a square in $\mathbb{F}_q$. In the following we prove
that this is equivalent to decide whether $t$ is a \emph{good
$G$-order} or not. Recall that $G=\PSL_2(q)$, $G_0=\SL_2(q)$ and the
notion of a \emph{``good $G$-order''} was given in
Definition~\ref{defn.good.G.order}.

\begin{prop}\label{prop.good.G.order}
Assume that $q$ is odd. Let $C \in G_0$, $\ga=\tr(C)$ and
$t=|\bar{C}|$. Assume that $\ga \ne \pm 2$ (or equivalently, $t \ne
p$). Then one of $2+\ga$, $2-\ga$ is a square in $\mathbb{F}_q$ if
and only if $t$ is a \emph{good $G$-order} (see
Definition~\ref{defn.good.G.order}).
\end{prop}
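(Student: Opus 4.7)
The plan is to reduce the problem to computing quadratic residues in $\mathbb{F}_q^\ast$. By Proposition~\ref{prop.Tr.q.n}, and after replacing $\gamma$ by $-\gamma$ if necessary (an operation which only swaps $2+\gamma$ and $2-\gamma$ and leaves the statement invariant), I may assume $\gamma = a + a^{-1}$ where $a$ is a primitive $2t$-th root of unity: in $\mathbb{F}_q^\ast$ in the split case $t \mid (q-1)/2$, or in the cyclic subgroup of order $q+1$ of $\mathbb{F}_{q^2}^\ast$ in the non-split case $t \mid (q+1)/2$. In either setting $a^t = -1$ and $a \neq \pm 1$ (since $\gamma \neq \pm 2$). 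The two algebraic identities
\[
    2 + \gamma = \frac{(a+1)^2}{a}, \qquad 2 - \gamma = -\frac{(a-1)^2}{a}
\]
serve as the starting point for the computation.

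I would then compute the quadratic characters of $u := 2+\gamma$ and $v := 2-\gamma$ in $\mathbb{F}_q^\ast$ via Fermat's criterion $x^{(q-1)/2} = \pm 1$, using the Frobenius relation ($a \in \mathbb{F}_q$ in the split case and $a^q = a^{-1}$ in the non-split case) to evaluate $(a \pm 1)^{q-1}$, together with $a^t = -1$. A short calculation then yields
\[
    v^{(q-1)/2} \;=\; \begin{cases} (-a)^{(q-1)/2} & \text{(split case)}, \\ (-a)^{(q+1)/2} & \text{(non-split case)}.\end{cases}
\]
Unpacking Definition~\ref{defn.good.G.order} shows that in both cases ``$v$ is a square in $\mathbb{F}_q^\ast$'' is equivalent to ``$t$ is a good $G$-order''; this already yields the implication ``good $G$-order $\Longrightarrow$ one of $u,v$ is a square''.

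For the reverse implication I need to rule out the scenario where $u$ is a square but $v$ is not. Consider the product
\[
    uv \;=\; 4 - \gamma^2 \;=\; -(a - a^{-1})^2.
\]
In the split case $(a-a^{-1})^2 \in (\mathbb{F}_q^\ast)^2$, whereas in the non-split case $(a-a^{-1})^q = -(a-a^{-1})$, so $a - a^{-1}$ spans the $(-1)$-eigenspace of Frobenius on $\mathbb{F}_{q^2}/\mathbb{F}_q$ and $(a-a^{-1})^2$ is a non-square in $\mathbb{F}_q^\ast$. Comparing with the quadratic character of $-1$, the elements $u$ and $v$ have the same character precisely when (split, $q \equiv 1 \bmod 4$) or (non-split, $q \equiv 3 \bmod 4$); in these subcases ``one of $u,v$ is a square'' is equivalent to ``$v$ is a square'', and the proposition follows from the first step. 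The main obstacle is the two remaining \emph{opposite-character} subcases (split with $q \equiv 3 \bmod 4$, and non-split with $q \equiv 1 \bmod 4$); there a parity argument settles the matter: $(q \mp 1)/2$ is odd, and $t \mid (q \mp 1)/2$ forces $t$ to be odd as well, so the integer $(q \mp 1)/(2t)$ is odd, and the Fermat computation gives $u^{(q-1)/2} = -1$---hence $u$ is automatically a non-square, and ``one of $u, v$ is a square'' again collapses to the good $G$-order condition, completing the equivalence.
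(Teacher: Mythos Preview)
Your proof is correct. Both your approach and the paper's start from the same factorisation
\[
2+\gamma=\frac{(a+1)^2}{a},\qquad 2-\gamma=\frac{(1-a)^2}{-a},
\]
so that ``$2\pm\gamma$ is a square in $\mathbb{F}_q$'' translates, in the split case, to ``$\pm a$ is a square in $\mathbb{F}_q^\ast$'' (and analogously with the norm-$1$ subgroup of $\mathbb{F}_{q^2}^\ast$ in the non-split case). From here the paper takes a shorter route: it simply asserts that $a$ itself is never a square (writing $a^{(q-1)/2}=-1$ in the split case and $a^{(q+1)/2}=-1$ in the non-split case), which immediately collapses ``one of $\pm a$ is a square'' to the good $G$-order condition ``$-a$ is a square''. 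You instead first establish that $v=2-\gamma$ is a square iff $t$ is a good $G$-order, and then rule out ``$u$ square, $v$ non-square'' via a case split on $q\bmod 4$: in the two same-character subcases you use the product $uv=-(a-a^{-1})^2$, and in the two opposite-character subcases you use the parity of $(q\mp1)/(2t)$ to force $u^{(q-1)/2}=-1$.

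The two routes are closely related: your parity computation in the opposite-character subcases is precisely the argument needed to justify the paper's blanket claim that $a$ is a non-square, while in the same-character subcases that claim can actually fail (for instance $q=13$, $t=3$, $a=4$ gives $a^{(q-1)/2}=4^6=1$), though the proposition survives because there $-a$ has the same quadratic character as $a$. So your longer case analysis is not wasted effort---it is a more careful version of the paper's argument and patches a gap in the paper's assertion $a^{(q\mp1)/2}=-1$.
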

\begin{proof}
As $t \ne p$ is a $G$-order then $t$ divides either $(q-1)/2$ or
$(q+1)/2$.

If $t$ divides $(q-1)/2$ then $\ga=a+a^{-1}$ or $\ga=-(a+a^{-1})$,
for some primitive root of unity $a$ of order $2t$ in $\mathbb{F}_q$
(see Proposition~\ref{prop.Tr.q.n}). Hence,
\[
    \{2+\ga, 2-\ga\} = \{a+2+a^{-1},(-a)+2+(-a)^{-1}\}.
\]
Therefore, $2+\ga$ or $2-\ga$ is a square in $\mathbb{F}_q$ if and
only if $a=c^2$ or $-a=c^2$ for some $c \in \mathbb{F}_q$. Indeed,
$a+2+a^{-1}$ is a square if and only if $(a+1)^2/a$ is a square if
and only if $a$ is a square in $\mathbb{F}_q$, and similarly for
$(-a)+2+(-a)^{-1}$.

Now, one the following necessarily holds:
\begin{itemize}
\item If $t$ is even then $-a$ is also a primitive root of unity
of order $2t$. Hence, $a$ is a square in $\mathbb{F}_q$ if and only
if $-a$ is a square is $\mathbb{F}_q$. In addition, $a$ is a square
in $\mathbb{F}_q$ if and only if $\mathbb{F}_q$ contains a primitive
root of unity of order $4t$, namely, if and only if $4t$ divides
$q-1$.

\item If $t$ is odd and $q \equiv 1 \bmod 4$ then $4t$ divides
$q-1$, and so $\mathbb{F}_q$ contains a primitive root of unity of
order $4t$. Thus $a$, which is a primitive root of unity of order
$2t$, is a square in $\mathbb{F}_q$, as required.

\item If $t$ is odd and $q \equiv 3 \bmod 4$ then $\mathbb{F}_q$
contains a primitive root of unity of order $2t$ but does not
contain a primitive root of unity of order $4t$, and so, $a$ is a
non-square in $\mathbb{F}_q$. However, $-a$ is a primitive root of
unity of order $t$, and so, it is necessarily a square in
$\mathbb{F}_q$, as required.
\end{itemize}
In conclusion, $a=c^2$ or $-a=c^2$ for some $c \in \mathbb{F}_q$ if
and only if either $t$ is odd and divides $q-1$ or $t$ is even and
$4t$ divides $q-1$.

\medskip

If $t$ divides $(q+1)/2$ then $\ga=a+a^q$ or $\ga=-(a+a^q)$, for
some primitive root of unity $a$ of order $2t$ in $\mathbb{F}_{q^2}$
(see Proposition~\ref{prop.Tr.q.n}). Hence,
\[
    \{2+\ga,2-\ga\} = \{a+2+a^q,(-a)+2+(-a)^q\}.
\]
Therefore, $2+\ga$ or $2-\ga$ is a square in $\mathbb{F}_q$ if and
only if $a=c^2$ or $-a=c^2$ for some $c \in \mathbb{F}_{q^2}$
satisfying $c^{q+1}=1$.

Now, one the following necessarily holds:
\begin{itemize}
\item If $t$ is even then $-a$ is also a primitive root of unity
of order $2t$. Hence, $a=c^2$ for some $c \in \mathbb{F}_{q^2}$
satisfying $c^{q+1}=1$ if and only if $-a=b^2$ for some $b \in
\mathbb{F}_{q^2}$ satisfying $b^{q+1}=1$. This is equivalent to the
condition that $4t$ divides $q+1$.

\item If $t$ is odd and $q \equiv 3 \bmod 4$ then $\mathbb{F}_{q^2}$
contains a primitive root of unity $b$ of order $4t$ satisfying
$b^{q+1}=1$. Hence, $a=c^2$ for some $c \in \mathbb{F}_{q^2}$
satisfying $c^{q+1}=1$, as required.

\item If $t$ is odd and $q \equiv 1 \bmod 4$ then $\mathbb{F}_{q^2}$
does not contain a primitive root of unity $c$ of order $4t$
satisfying $c^{q+1}=1$. However, in this case, $-a=b^2$ for some $b
\in \mathbb{F}_{q^2}$ satisfying $b^{q+1}=1$, as required.
\end{itemize}
In conclusion, $a=c^2$ or $-a=c^2$ for some $c \in \mathbb{F}_{q^2}$
satisfying $c^{q+1}=1$ if and only if either $t$ is odd and divides
$q+1$ or $t$ is even and $4t$ divides $q+1$.
\end{proof}

\begin{corr}\label{corr.G.unip.triple}
Assume that $q=p^e$ for some odd prime $p$ and some positive integer
$e$. Let $(A,B,C)$ be a $G$-triple of respective orders $(p,p,t)$,
$t \ne p$. Then $A$ is $G$-conjugate to $B$ if and only if $t$ is a
\emph{good $G$-order}.
\end{corr}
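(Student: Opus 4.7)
My plan is to combine Propositions~\ref{prop.G0.unip.triple} and~\ref{prop.good.G.order} via a lifting argument. First, I would lift the $G$-triple $(A,B,C)$ to a $G_0$-triple $(\tilde A, \tilde B, \tilde C)$ with $\tilde A \tilde B \tilde C = I$; by an appropriate sign choice on the three preimages this is always possible. Since $A$ and $B$ are unipotent of order $p$, the lifts satisfy $\tilde A, \tilde B \ne \pm I$ and $\tr(\tilde A), \tr(\tilde B) \in \{\pm 2\}$, while the assumption $|C| = t \ne p$, together with $\Tr_q(p) = \{\pm 2\}$ from Table~\ref{table.elm.PSL} and Proposition~\ref{prop.Tr.q.n}, ensures that $\gamma := \tr(\tilde C) \ne \pm 2$.

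The crux is the observation that $A$ is $G$-conjugate to $B$ precisely when there exists $M \in G_0$ with $M \tilde A M^{-1} \in \{\tilde B, -\tilde B\}$, since $\pm\tilde B$ are the two preimages of $B$ in $G_0$. Because conjugation preserves the trace, only one of these two cases can possibly occur, depending on whether $\tr(\tilde A) = \tr(\tilde B)$ or $\tr(\tilde A) = -\tr(\tilde B)$. Invoking Proposition~\ref{prop.G0.unip.triple}, the first conjugacy is equivalent to $2 - \gamma$ being a square in $\mathbb{F}_q$ (case (1)), while the second is equivalent to $2 + \gamma$ being a square in $\mathbb{F}_q$ (case (2)). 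Putting these together, I obtain: $A$ is $G$-conjugate to $B$ if and only if at least one of $2 - \gamma$, $2 + \gamma$ is a square in $\mathbb{F}_q$.

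Finally, I would invoke Proposition~\ref{prop.good.G.order}, which asserts exactly that this disjunctive squareness condition is equivalent to $t$ being a good $G$-order, completing the proof. The main conceptual obstacle is the bookkeeping of sign choices for the lifts: swapping $\tilde B$ for $-\tilde B$ forces $\tilde C$ to become $-\tilde C$, and hence $\gamma$ to become $-\gamma$, moving between the two trace cases while interchanging the roles of $2 - \gamma$ and $2 + \gamma$. This sign symmetry in $\gamma$ is precisely the one built into the statement of Proposition~\ref{prop.good.G.order} and into the definition of a good $G$-order, which is why the equivalence ultimately depends only on the order $t$ and is insensitive to the specific lifts chosen.
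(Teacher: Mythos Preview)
Your argument tracks the paper's proof step for step: lift to $G_0$, split into the equal-trace and opposite-trace cases, apply Proposition~\ref{prop.G0.unip.triple}, then invoke Proposition~\ref{prop.good.G.order}. The passage you flag as the ``main conceptual obstacle'' is indeed the delicate point, and neither your argument nor the paper's actually justifies it. For a fixed $G$-triple, choose the lift with $\tr\tilde A=\tr\tilde B$; then Proposition~\ref{prop.G0.unip.triple}(1) yields
\[
A\sim_G B \iff 2-\gamma \text{ is a square in }\mathbb{F}_q,
\]
\emph{not} the disjunction ``$2-\gamma$ or $2+\gamma$ is a square''. Your sign-swap remark only shows that passing to the opposite-trace lift replaces $\gamma$ by $-\gamma$ while simultaneously switching to case~(2), reproducing the \emph{same} condition ``$2-\gamma$ square''; it does not supply the missing alternative ``$2+\gamma$ square''.

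In fact the corollary as literally stated fails. Take $q=p=7$ and $t=3$: here $3$ is a good $G$-order (since $3\mid (7-1)/2$ and $-3=4=2^2$ in $\mathbb{F}_7$), yet the $G_0$-triple with trace triple $(2,2,-1)$ has $2-\gamma=3$, a nonsquare in $\mathbb{F}_7$, so $\bar A\not\sim_G\bar B$. The companion trace triple $(2,2,1)$ gives $2-\gamma=1$ and $\bar A\sim_G\bar B$; both outcomes occur for the same $t$.

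What does hold, and what suffices for Lemma~\ref{lem.good.unip.triple} and for the proof of Theorem~\ref{thm.Beau.PSL2q}, is the existential version: for any triple, $A\sim_G B$ implies $t$ is good (since ``$2-\gamma$ square'' certainly gives ``one of $2\pm\gamma$ square''); and conversely, if $t$ is good then by Proposition~\ref{prop.good.G.order} one of $2\pm\gamma$ is a square, so after replacing $\gamma$ by $-\gamma$ if necessary (still in $\Tr_q(t)$) one has $2-\gamma$ a square, and the $G_0$-triple with traces $(2,2,\gamma)$ furnishes a $G$-triple with $A\sim_G B$.
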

\begin{proof}
Let $(A,B,C)$ be a $G_0$-triple and assume that its image in $G$,
$(\bar{A},\bar{B},\bar{C})$ has respective orders $(p,p,t)$, $t \ne
p$. Denote $\ga=\tr(C)$. Then $\bar{A}$ and $\bar{B}$ are unipotent
if and only if $A,B \ne \pm I$ and $\tr(A),\tr(B)\in \{\pm 2\}$.
Moreover, $\bar{A}$ and $\bar{B}$ are $G$-conjugate if and only if
either $\tr(A)=\tr(B)$ and $A$ and $B$ are $G_0$-conjugate or
$\tr(A)=-\tr(B)$ and $A$ and $-B$ are $G_0$-conjugate. From
Proposition~\ref{prop.G0.unip.triple} we deduce that $\bar{A}$ and
$\bar{B}$ are $G$-conjugate if and only if $2-\ga$ or $2+\ga$ is a
square in $\mathbb{F}_q$. By Proposition~\ref{prop.good.G.order},
the latter is equivalent to $t$ being a good $G$-order.
\end{proof}

\begin{lemma}\label{lem.good.unip.triple}
Assume that $q=p^e$ where $p$ is odd and $5 \leq q \neq 9$. There
exists a $G$-triple $(A,B,C)$ of respective orders $(p,p,t)$, $t \ne
p$, such that $\langle A, B \rangle=G$ and $A$ is $G$-conjugate to
$B$ if and only if $e=\mu_{\PSL}(p;t)$ and $t$ is a \emph{good
$G$-order}.
\end{lemma}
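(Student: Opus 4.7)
The lemma will be derived by assembling two results from earlier in the paper: Theorem~\ref{thm.tri.PSL2q}, which tells us exactly when $\PSL_2(p^e)$ is a torsion-free-kernel quotient of a triangle group $T_{r,s,t}$, and Corollary~\ref{corr.G.unip.triple}, which characterizes the $G$-conjugacy of the two unipotents in a $G$-triple $(A,B,C)$ of orders $(p,p,t)$ with $t\ne p$ in terms of $t$ being a good $G$-order. My plan is to handle the two directions of the equivalence separately, so that each direction amounts to a single invocation of one of these results together with a routine consistency check on $\gcd(t,p)$ and on hyperbolicity of the triple $(p,p,t)$.

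For the necessity, I begin with a $G$-triple $(A,B,C)$ of orders $(p,p,t)$ with $\langle A,B\rangle=G$ and $A$ $G$-conjugate to $B$. Since any element of $\PSL_2(q)$ of order different from $p$ is either split or non-split (and so of order coprime to $p$), the assumption $t\ne p$ gives $\gcd(t,p)=1$. The generating condition means that $G$ is a torsion-free-kernel quotient of $T_{p,p,t}$; of the rows of Table~\ref{table.triple.PSL}, only the one for ``permutations of $(p,p,t')$ with $\gcd(t',p)=1$'' applies (row 1 requires $t=p$, and rows 3--4 require all, or all but one, of the entries coprime to $p$), and it forces $e=\mu_{\PSL}(p,t)=\mu_{\PSL}(p;t)$. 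Applying Corollary~\ref{corr.G.unip.triple} to the given triple then translates the $G$-conjugacy $A\sim_G B$ into the statement that $t$ is a good $G$-order.

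For the sufficiency, I assume $e=\mu_{\PSL}(p;t)$ and that $t$ is a good $G$-order. The definition of good $G$-order forces $t\mid (q-1)/2$ or $t\mid (q+1)/2$, whence $\gcd(t,p)=1$ and $t\ne p$. Before invoking Theorem~\ref{thm.tri.PSL2q} I verify that $(p,p,t)$ is hyperbolic: for $p\ge 5$ this is automatic, while for $p=3$ the equation $e=\mu_{\PSL}(3,t)$ combined with $q\ge 5$ excludes $t=2$ (which would give $\mu_{\PSL}(3,2)=1$ and hence $q=3$), and $t=3$ is excluded by $t\ne p$, leaving $t\ge 4$. The same row of Theorem~\ref{thm.tri.PSL2q} as above now produces a $G$-triple $(A,B,C)$ of orders exactly $(p,p,t)$ with $\langle A,B\rangle=G$, and Corollary~\ref{corr.G.unip.triple} applied to this triple, combined with the good-$G$-order hypothesis, delivers the required $G$-conjugacy of $A$ and $B$.

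The substantive content of the lemma is therefore packaged entirely in Theorem~\ref{thm.tri.PSL2q} and Corollary~\ref{corr.G.unip.triple}; the lemma itself is their clean assembly. The only point that requires even a moment's attention is confirming that the triple $(p,p,t)$ is hyperbolic so that Theorem~\ref{thm.tri.PSL2q} is applicable in the form needed, and this is precisely what the hypothesis $5\le q\ne 9$ underwrites. Hence I do not expect any serious obstacle beyond this routine book-keeping.
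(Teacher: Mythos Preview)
Your proof is correct and follows essentially the same strategy as the paper: both directions are obtained by combining Theorem~\ref{thm.tri.PSL2q} with Corollary~\ref{corr.G.unip.triple}. The only difference is organizational: for sufficiency the paper first produces a $(p,p,t)$-triple via Macbeath's existence result, applies Corollary~\ref{corr.G.unip.triple} to get $A\sim_G B$, and then verifies generation by hand (checking non-singularity, excluding small subgroups using $5\le q\ne 9$, excluding irregularity, and invoking $e=\mu_{\PSL}(p;t)$ to rule out proper subfield subgroups), whereas you invoke Theorem~\ref{thm.tri.PSL2q} directly to obtain a generating triple and then apply Corollary~\ref{corr.G.unip.triple}. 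Since Corollary~\ref{corr.G.unip.triple} applies to \emph{any} $G$-triple of orders $(p,p,t)$, your shortcut is legitimate, and your hyperbolicity check for $p=3$ is the correct place where the hypothesis $q\ge 5$ enters.
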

\begin{proof}
Let $(A,B,C)$ be a $G$-triple of respective orders $(p,p,t)$, $t \ne
p$. If $\langle A, B \rangle=G$ then Theorem~\ref{thm.tri.PSL2q}
implies that $e=\mu_{\PSL}(p;t)$, and if moreover $A$ is
$G$-conjugate to $B$ then Corollary~\ref{corr.G.unip.triple} implies
that $t$ is a good $G$-order.

If $e=\mu_{\PSL}(p;t)$ then there exists a $G$-triple $(A,B,C)$ of
respective orders $(p,p,t)$ (see Section~\ref{sect.Mac}). If
moreover $t$ is a good $G$-order then $A$ is $G$-conjugate to $B$,
by Corollary~\ref{corr.G.unip.triple}.

We now use the methodology described in Section~\ref{sect.Mac}. Let
$\ga \in \Tr_{p^e}(t)$. Observe that Equation~\eqref{eq.sing} is
equivalent in this case to $(\ga \pm 2)^2= 0$. Since $t \ne p$ then
$\ga \ne \pm 2$, and so this equality does not hold, implying that
$\langle A, B \rangle$ is not a structural subgroup,
by~\cite[Theorem 2]{Mac}. As $e=\mu_{\PSL}(p;t)$, it follows from
Table~\ref{table.Ex.triples.5.7.8.9} that if $5< q \ne 9$ then
either $p>5$; or $p=5$ and $e>1$ implying that $t \ne 2,3,5$; or
$p=3$ and $e>2$ implying that $t>5$. Therefore, $\langle A,B
\rangle$ cannot be a small subgroup. If $q=5$ then $\langle A,B
\rangle \cong A_5 = G$ as required. In addition, $(A,B,C)$ is
clearly not an irregular $G$-triple. The condition that
$e=\mu_{\PSL}(p;t)$ now ensures that $\langle A,B \rangle = G$.
\end{proof}

\begin{rem}\label{rem.9.335}
In the case $G=\PSL_2(9)$ one needs to consider the $G$-orders $4$
and $5$.
\begin{itemize}
\item $4$ is not a good $G$-order, and so, if $(A,B,C)$ is a
$G$-triple of respective orders $(3,3,4)$ then $A$, $B$ are not
$G$-conjugate.

\item $5$ is a good $G$-order. However, if $(A,B,C)$ is a $G$-triple
of respective orders $(3,3,5)$ and $A$ is $G$-conjugate to $B$, then
one can verify that $\langle A,B \rangle \cong A_5$ is a small
subgroup of $G$ (see also \cite[\S2, Theorem 8.4]{Go}).
\end{itemize}
\end{rem}

\begin{example} Table~\ref{table.good.G.orders} presents for $p=5, 7, 11, 13$ \emph{all} the
$G$-orders $t \ne p$ such that $e=\mu_{\PSL}(p;t) \in \{1,2\}$,
divided according to whether they are \emph{good $G$-orders} or not.
They were computed using \textsc{Magma}.
\begin{table}[h]
\begin{tabular} {|c|c|c||c|c|c|}
& good $G$-orders & not good $G$-orders & & good $G$-orders & not
good $G$-orders \\
\hline \hline
$q=5$ & $3$ & $2$ & $q=25$ & $6, 13$ & $4, 12$ \\
\hline
$q=7$ & $2, 3$ & $4$ & $q=49$ & $5,6,12,25$ & $8,24$ \\
\hline
$q=11$ & $3,5$ & $2,6$ & $q=121$ & $10,15,30,61$ & $4,12,20,60$ \\
\hline
$q=13$ & $3,7$ & $2,6$ & $q=169$ & $5,14,17,21,42,85$ & $4,12,28,84$ \\
\hline
\end{tabular}
\caption{Good $G$-orders for $q=5, 7, 11, 13, 25, 49, 121, 169$}
\label{table.good.G.orders}
\end{table}
\end{example}

\section{Beauville Structures for $\PSL_2(q)$ and $\PGL_2(q)$}\label{sect.beau}
In this section we prove Theorems~\ref{thm.Beau.PSL2q}
and~\ref{thm.Beau.PGL2q}.

\subsection{Cyclic groups}
The following elementary lemma is needed for the proof of
Theorems~\ref{thm.Beau.PSL2q} and~\ref{thm.Beau.PGL2q}.

\begin{lemma}\label{lem.cyclic}
Let $\cC$ be a finite cyclic group, and let $x$ and $y$ be
non-trivial elements in $\cC$. If the orders of $x$ and $y$ are not
relatively prime, then there exist some integers $k$ and $l$ such
that $x^k=y^l \neq 1$.
\end{lemma}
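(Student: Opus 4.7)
The plan is to leverage the standard fact that a finite cyclic group has exactly one subgroup of each order dividing the group's order. The required relation $x^k = y^l \ne 1$ is equivalent to producing a non-trivial element in $\langle x \rangle \cap \langle y \rangle$, so it suffices to show this intersection is non-trivial.

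Concretely, I would set $d = \gcd(|x|, |y|)$, which by hypothesis satisfies $d > 1$. Since $d$ divides $|x|$, the element $x' := x^{|x|/d}$ has order exactly $d$, and similarly $y' := y^{|y|/d}$ has order exactly $d$. In the cyclic group $\cC$ the subgroups of order $d$ are unique, so $\langle x' \rangle = \langle y' \rangle$. Consequently $y'$ is a power of $x'$, i.e.\ there is some integer $j$ with $y^{|y|/d} = (x^{|x|/d})^{j}$. Taking $k = j \cdot |x|/d$ and $l = |y|/d$ gives $x^k = y^l$, and both sides have order $d > 1$ so neither is the identity. No substantive obstacle is expected — the argument is a direct application of the subgroup lattice of a finite cyclic group, and the coprimality hypothesis is used exactly once, to guarantee $d > 1$.
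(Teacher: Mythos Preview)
Your proof is correct and follows essentially the same approach as the paper: both arguments set $d=\gcd(|x|,|y|)>1$, raise $x$ and $y$ to the powers $|x|/d$ and $|y|/d$ to obtain elements of order $d$, and then invoke the uniqueness of the order-$d$ subgroup in a finite cyclic group to conclude. The only cosmetic difference is that the paper names a generator $z$ of the common subgroup and expresses both powers as equal to $z$, whereas you express $y'$ directly as a power of $x'$.
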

\begin{proof}
Let $a$ and $b$ denote the orders of $x$ and $y$ respectively and
set $c=\gcd(a,b)$. Note that by assumption $c \neq 1$. Also write
$a=a'c$ and $b=b'c$ where $\gcd(a',b')=1$, so that $x^{a'}$ and
$y^{b'}$ have order $c$.

Observe that $\cC$ has a unique (cyclic) subgroup of order $c$, and
let $z$ be a generator of this subgroup. Thus,
\[
    \langle x^{a'} \rangle = \langle z \rangle = \langle y^{b'} \rangle.
\]
Therefore, there exist some integers $k$ and $l$ such that
\[
    x^{a'k} = z = y^{b'l} \ne 1,
\]
where the latter inequality follows from the fact that $z$ is of
order $c > 1$.
\end{proof}

\subsection{Elements and conjugacy classes in $\PSL_2(q)$ and $\PGL_2(q)$}
\label{sect.conj}
Let $H = \PSL_2(q)$ or $\PGL_2(q)$ where $q = p^e$ for some prime
$p$ and some positive integer $e$. In this section, we determine the
elements $A_1,A_2$ of $H$ such that $\Sigma(A_1) \cap \Sigma(A_2) =
\{1\}$, where for elements $A_1, \dots ,A_m$ of $H$,
\[
     \Sigma(A_1,\dots,A_m)= \bigcup_{A \in H} \bigcup^{\infty}_{j=1}
     \{A A_1^j A^{-1},\dots, A A_m^j A^{-1}\}.
\]

Given two triples $(A_1,A_2,A_3)$ and $(B_1,B_2,B_3)$ of $H$, this
will enable us to determine whether $\Sigma(A_1,A_2,A_3) \cap
\Sigma(B_1,B_2,B_3) = \{1\}$ which is a necessary condition for $H$
to admit an unmixed Beauville structure (see
Definition~\ref{defn.beau}{\it(iii)}). Indeed the condition
$\Sigma(A_1,A_2,A_3) \cap \Sigma(B_1,B_2,B_3) = \{1\}$ is equivalent
to the condition $$\Sigma(A_i) \cap \Sigma(B_j) = \{1\} \quad
\forall 1 \leq i,j \leq 3.$$

\begin{lemma}\label{lem.PSL.sigma}
Let $G = \PSL_2(q)$ where $q = p^e$ for some prime number $p$ and
some positive integer $e$. Let $A_1,A_2 \in G$. Then $\Sigma(A_1)
\cap \Sigma(A_2) = \{1\}$ if and only if one of the following
occurs:

\begin{enumerate}\renewcommand{\theenumi}{\alph{enumi}}
\item The orders $|A_1|$ and $|A_2|$ are relatively prime.
\item The prime $p$ is odd, $e$ is even, $|A_1|=|A_2|=p$ and $A_1,A_2$
are not $G$-conjugate.
\end{enumerate}
\end{lemma}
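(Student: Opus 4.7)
The plan is to prove both directions by working with the three types of elements (unipotent, split, non-split) classified in Table~\ref{table.elm.PSL}, and using the facts that there is a single $G$-conjugacy class of maximal split (resp.\ non-split) cyclic subgroups, together with Lemma~\ref{lem.unip.i} for the unipotent case.

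\textbf{Backward direction.} Suppose (a) holds. Any element of $\Sigma(A_i)$ is $G$-conjugate to a power of $A_i$, so its order divides $|A_i|$. Thus an element in $\Sigma(A_1)\cap\Sigma(A_2)$ has order dividing $\gcd(|A_1|,|A_2|)=1$, hence equals $1$. If instead (b) holds, then every non-trivial power of $A_i$ has order $p$, and by the second bullet of Lemma~\ref{lem.unip.i} every such power is $G$-conjugate to $A_i$ itself. Hence $\Sigma(A_i)=\{1\}\cup[A_i]^G$, and the hypothesis that $A_1$ and $A_2$ are not $G$-conjugate yields $\Sigma(A_1)\cap\Sigma(A_2)=\{1\}$.

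\textbf{Forward direction (contrapositive).} Assume (a) and (b) both fail; we construct a non-trivial element in $\Sigma(A_1)\cap\Sigma(A_2)$. First, since (a) fails there is a common prime $r>1$ dividing both $|A_1|$ and $|A_2|$. Using Tables~\ref{table.gp.elm} and~\ref{table.elm.PSL}, unipotent orders are powers of $p$, while split (resp.\ non-split) orders divide $(q-1)/d$ (resp.\ $(q+1)/d$). The integers $(q-1)/d$, $(q+1)/d$, $p$ are pairwise coprime: $p\nmid (q\pm 1)$, and $\gcd(q-1,q+1)$ divides $2$ which is absorbed by $d$ when $p$ is odd, while for $p=2$ both $q\pm 1$ are odd. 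Consequently $A_1$ and $A_2$ must be of the \emph{same} type.

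\textbf{Same-type analysis.} If both are split, then (up to $G$-conjugation) we may place $A_1$ and a conjugate $A_2'=gA_2g^{-1}$ inside a common split cyclic subgroup $\mathcal{C}$ of order $(q-1)/d$; Lemma~\ref{lem.cyclic} applied in $\mathcal{C}$ produces integers $k,l$ with $A_1^k=(A_2')^l\neq 1$, placing a non-trivial element in $\Sigma(A_1)\cap\Sigma(A_2)$. The non-split case is identical, using the single $G$-conjugacy class of non-split cyclic subgroups of order $(q+1)/d$. The remaining case is that both $A_1,A_2$ are unipotent, so $|A_1|=|A_2|=p$. Since (b) fails, one of the following occurs: either $p=2$, or $e$ is odd, or $A_1$ and $A_2$ are $G$-conjugate. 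In the last situation $A_1\in [A_2]^G\subset\Sigma(A_2)$ and we are done. If $p=2$, all involutions of $\PSL_2(2^e)$ form a single $G$-conjugacy class (Table~\ref{table.elm.PSL} with $d=1$), again reducing to the previous subcase. Finally, if $p$ is odd and $e$ is odd, the first bullet of Lemma~\ref{lem.unip.i} gives $0<i,j<p$ with $A_1^i$ and $A_2^j$ both $G$-conjugate to $\bar U_1$, so $\bar U_1\in\Sigma(A_1)\cap\Sigma(A_2)$.

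\textbf{Anticipated obstacle.} The only delicate point is verifying that split and non-split orders are coprime to each other and to $p$, so that a shared prime divisor of $|A_1|$ and $|A_2|$ truly forces identical types; this is a short check but is the hinge that reduces the whole statement to the three monochromatic cases handled by Lemma~\ref{lem.cyclic} and Lemma~\ref{lem.unip.i}.
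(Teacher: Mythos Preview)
Your proof is correct and follows essentially the same approach as the paper: both arguments hinge on the pairwise coprimality of $p$, $(q-1)/d$, $(q+1)/d$ to force $A_1,A_2$ into the same type, then invoke the single $G$-conjugacy class of maximal split (resp.\ non-split) cyclic subgroups together with Lemma~\ref{lem.cyclic} for the semisimple cases, and Lemma~\ref{lem.unip.i} for the unipotent case. Your write-up is slightly more explicit in separating the two directions and in spelling out the $p=2$ and ``already conjugate'' subcases, but the logical skeleton is identical to the paper's.
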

\begin{proof}
If the orders of $A_1$ and $A_2$ are relatively prime then every two
non-trivial powers $A_1^i$ and $A_2^j$ have distinct orders, and
thus $\Sigma(A_1) \cap \Sigma(A_2) = \{1\}$ as required.

Now, assume that the orders of $A_1$ and $A_2$ are not relatively
prime.

Observe that $(q-1)/d$ and $(q+1)/d$ are coprime, where
$d=\gcd(q-1,2)$. Thus, if there exists some prime $r \neq p$ which
divides both $|A_1|$ and $|A_2|$, then $r$ divides exactly one of
$(q-1)/d$ or $(q+1)/d$. Hence, $|A_1|$ and $|A_2|$ both divide
exactly one of $(q-1)/d$ or $(q+1)/d$, and so $A_1$ and $A_2$ are
$G$-conjugate to two elements $C_1$ and $C_2$ which belong to the
same cyclic group, either of order $(q-1)/d$ or of order $(q+1)/d$
(see Section~\ref{sect.subgroups}). Lemma~\ref{lem.cyclic} now
implies that there exist some integers $i$ and $j$ such that
$A_1^{i} \neq 1$ and $A_2^{j} \neq 1$ are $G$-conjugate, and so
$\Sigma(A_1) \cap \Sigma(A_2) \neq \{1\}$.

It now remains to treat the case where $|A_1|=|A_2|=p$, that is when
$A_1$ and $A_2$ are unipotent elements. If $p=2$ then necessarily
$A_1$ and $A_2$ are $G$-conjugate and so $\Sigma(A_1) \cap
\Sigma(A_2) \neq \{1\}$. However, when $p$ is odd then $A_1$ and
$A_2$ are not necessarily $G$-conjugate (see
Table~\ref{table.elm.PSL} and Section~\ref{sect.unip}).

If $p$ is odd and $e$ is odd then by Lemma~\ref{lem.unip.i}, there
exist some integers $i$ and $j$ such that $A_1^{i} \neq 1$ and
$A_2^{j} \neq 1$ are $G$-conjugate, and so $\Sigma(A_1) \cap
\Sigma(A_2) \neq \{1\}$. If $p$ is odd and $e$ is even then by
Lemma~\ref{lem.unip.i}, for any two integers $1 \leq i,j <p$,
$A_1^i$ and $A_2^j$ are $G$-conjugate to $A_1$ and $A_2$
respectively. Thus, $\Sigma(A_1) \cap \Sigma(A_2) = \{1\}$ if and
only if $A_1$ and $A_2$ are not $G$-conjugate.
\end{proof}

\begin{prop}\label{prop.sigma.unip}
Let $G = \PSL_2(q)$ where $q = p^e$ for some odd prime $p$ and even
integer $e$. Take some $x \in \mathbb{F}_{q^2} \setminus
\mathbb{F}_q$ such that $x^2 \in \mathbb{F}_q$, and let $X\in
\PSL_2(q^2)$ denote the image of the matrix $\begin{pmatrix} x & 0
\\ 0 & x^{-1} \end{pmatrix} \in \SL_2(q^2)$.

Let $A_1,A_2,B_1,B_2 \in G$ such that $|A_1|=|A_2|=|B_1|=|B_2|=p$.
Then, $XA_2X^{-1},$ $XB_2X^{-1} \in G$, and $A_1$ is $G$-conjugate to either $A_2$ or $XA_2X^{-1}$.
Moreover,

\begin{enumerate}\renewcommand{\theenumi}{\it \roman{enumi}}
\item Either $\Sigma(A_1) \cap \Sigma(A_2) = \{1\}$ or $\Sigma(A_1) \cap \Sigma(XA_2X^{-1}) = \{1\}$.
\item $\Sigma(A_1,B_1) \cap \Sigma(A_2) = \{1\}$ or
$\Sigma(A_1,B_1) \cap \Sigma(XA_2X^{-1}) = \{1\}$ \\ if and only if
$A_1$ is $G$-conjugate to $B_1$.
\item $\Sigma(A_1,B_1) \cap \Sigma(A_2,B_2) = \{1\}$ or
$\Sigma(A_1,B_1) \cap \Sigma(XA_2X^{-1},XB_2X^{-1}) = \{1\}$ \\ if
and only if $A_1$ is $G$-conjugate to $B_1$ and $A_2$ is
$G$-conjugate to $B_2$.
\end{enumerate}
\end{prop}

\begin{proof}
By Corollary~\ref{corr.unip.G}, $XA_2X^{-1},XB_2X^{-1} \in G$,
$A_1$ is $G$-conjugate to either $A_2$ or $XA_2X^{-1}$, and
$B_1$ is $G$-conjugate to either $B_2$ or $XB_2X^{-1}$.

{\it(i)} If $A_1$ and $A_2$ are not $G$-conjugate then by Lemma~\ref{lem.PSL.sigma},
$\Sigma(A_1) \cap \Sigma(A_2) = \{1\}$. On the other hand,
if $A_1$ and $A_2$ are $G$-conjugate, then $A_1$ and $XA_2X^{-1}$ are not $G$-conjugate,
and again by Lemma~\ref{lem.PSL.sigma}, $\Sigma(A_1) \cap \Sigma(XA_2X^{-1}) = \{1\}$.

{\it(ii)} Assume that $A_1$ and $B_1$ are $G$-conjugate.
If $A_2$ is not $G$-conjugate to $A_1$ (and to $B_1$) then by Lemma~\ref{lem.PSL.sigma},
$\Sigma(A_1) \cap \Sigma(A_2) = \{1\}$ (and $\Sigma(B_1) \cap \Sigma(A_2) = \{1\}$),
implying $\Sigma(A_1,B_1) \cap \Sigma(A_2) = \{1\}$.
Otherwise, $XA_2X^{-1}$ is not $G$-conjugate to $A_1$ (and to $B_1$) and so, similarly,
$\Sigma(A_1,B_1) \cap \Sigma(XA_2X^{-1}) = \{1\}$.

Now assume that $A_1$ and $B_1$ are not $G$-conjugate. In this case, $A_2$ is $G$-conjugate
to either $A_1$ or $B_1$.
If $A_2$ is $G$-conjugate to $A_1$ then  $XA_2X^{-1}$ is $G$-conjugate to $B_1$,
and so by Lemma~\ref{lem.PSL.sigma}, $\Sigma(A_1) \cap \Sigma(A_2) \ne \{1\}$ and
$\Sigma(B_1) \cap \Sigma(XA_2X^{-1}) \ne \{1\}$.
Otherwise, $A_2$ is $G$-conjugate to $B_1$ and so $XA_2X^{-1}$ is $G$-conjugate to $A_1$,
thus, similarly, $\Sigma(B_1) \cap \Sigma(A_2) \ne \{1\}$ and
$\Sigma(A_1) \cap \Sigma(XA_2X^{-1}) \ne \{1\}$.

{\it(iii)} Assume that $A_1$ is $G$-conjugate to $B_1$ and $A_2$ is $G$-conjugate to $B_2$.
If $A_2$ (and $B_2$) are not $G$-conjugate to $A_1$ (and $B_1$) then by Lemma~\ref{lem.PSL.sigma},
$\Sigma(A_1,B_1) \cap \Sigma(A_2,B_2) = \{1\}$.
Otherwise, $A_2$ (and $B_2$) are $G$-conjugate to $A_1$ (and $B_1$), and so,
$XA_2X^{-1}$ (and $XB_2X^{-1}$) are not $G$-conjugate to $A_1$ (and $B_1$), hence, similarly,
$\Sigma(A_1,B_1) \cap \Sigma(XA_2X^{-1},XB_2X^{-1}) = \{1\}$.

If $A_1$ is not $G$-conjugate to $B_1$, then by {\it(ii)},
$\Sigma(A_1,B_1) \cap \Sigma(A_2) \ne \{1\}$ and $\Sigma(A_1,B_1)
\cap \Sigma(XA_2X^{-1}) \ne \{1\}$. Similarly, if $A_2$ is not
$G$-conjugate to $B_2$, then by {\it(ii)}, $\Sigma(A_1) \cap
\Sigma(A_2,B_2) \ne \{1\}$ and $\Sigma(A_1) \cap
\Sigma(XA_2X^{-1},XB_2X^{-1}) \ne \{1\}$.
\end{proof}

\begin{lemma}\label{lem.PGL.sigma}
Let $G_1 = \PGL_2(q)$ where $q = p^e$ for some odd prime number $p$
and some positive integer $e$. Let $A_1,A_2 \in G_1$. Then
$\Sigma(A_1) \cap \Sigma(A_2) = \{1\}$ if and only if one of the
following occurs:

\begin{enumerate}\renewcommand{\theenumi}{\alph{enumi}}
\item The orders $|A_1|$ and $|A_2|$ are relatively prime.
\item $A_1$ is split, $A_2$ is non-split and $\gcd(|A_1|,|A_2|)=2$.
\item $A_1$ is non-split, $A_2$ is split and $\gcd(|A_1|,|A_2|)=2$.
\end{enumerate}
\end{lemma}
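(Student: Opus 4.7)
My plan is to mirror the proof of Lemma~\ref{lem.PSL.sigma}, using three facts specific to $G_1=\PGL_2(q)$ with $q=p^e$ odd: every non-trivial element is unipotent, split, or non-split (Table~\ref{table.gp.elm}); all unipotent elements form a single $G_1$-conjugacy class (Table~\ref{table.elm.PSL}, the $d=2$ classes of $G$ unite in $G_1$); and the involutions of $G_1$ split into exactly two $G_1$-classes, one of split involutions and one of non-split involutions. Since conjugation preserves type and powers of a split (resp.\ non-split) element stay inside a split (resp.\ non-split) maximal cyclic torus, each non-trivial element of $\Sigma(A)$ has the same type as $A$.

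\emph{Sufficiency.} In case~(a), any non-trivial element of $\Sigma(A_1)\cap\Sigma(A_2)$ would have order dividing both $|A_1|$ and $|A_2|$, hence order $1$. In cases~(b) and~(c), the above type-preservation shows that $\Sigma(A_1)\setminus\{1\}$ and $\Sigma(A_2)\setminus\{1\}$ are contained in disjoint type-strata (split versus non-split), so the intersection is $\{1\}$.

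\emph{Necessity.} Assume $\Sigma(A_1)\cap\Sigma(A_2)=\{1\}$ and set $g=\gcd(|A_1|,|A_2|)$; the goal is to show $g=1$, or $g=2$ with $A_1,A_2$ of opposite types. If $p\mid g$, then both $|A_i|=p$ and $A_1,A_2$ lie in the unique unipotent $G_1$-class, a contradiction. If $g$ has an odd prime factor $r\neq p$, then since $\gcd(q-1,q+1)=2$, the prime $r$ divides exactly one of $q\pm 1$; consequently $|A_1|$ and $|A_2|$ both divide that same one of $q\pm 1$, so $A_1,A_2$ share a type. All maximal cyclic subgroups of a given type are $G_1$-conjugate, so $A_1$ and $A_2$ can be conjugated into a common cyclic subgroup $\cC$, and Lemma~\ref{lem.cyclic} provides non-trivial powers $A_1^k=A_2^l$, contradicting the hypothesis. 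If $g$ is a $2$-power with $g\geq 4$, then $4\mid |A_i|$ for $i=1,2$; since $4$ divides exactly one of $q\pm 1$ for odd $q$, both $A_i$ again share a type and the previous argument gives a contradiction. We are left with $g=2$. Then $A_1^{|A_1|/2}$ and $A_2^{|A_2|/2}$ are involutions; if $A_1$ and $A_2$ were both split (resp.\ both non-split), these two involutions would lie in the unique split (resp.\ non-split) involution class of $G_1$ and hence be $G_1$-conjugate, contradicting the hypothesis. Therefore $A_1$ and $A_2$ have opposite types, placing us in case~(b) or~(c).

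The only step that is not a direct adaptation of Lemma~\ref{lem.PSL.sigma} is the treatment of $g=2$, which relies on the precise structure of involutions in $\PGL_2(q)$: the involutions in $G=\PSL_2(q)$ form one $G_1$-class (split when $q\equiv 1\pmod 4$, non-split when $q\equiv 3\pmod 4$), and the involutions in $G_1\setminus G$ form a second $G_1$-class of the opposite type. This is the fact I expect to cite most carefully, but it is essentially read off from Table~\ref{table.elm.PSL} together with the well-known structure of $G_1\setminus G$; the remainder of the argument is bookkeeping with cyclic subgroups and Lemma~\ref{lem.cyclic}.
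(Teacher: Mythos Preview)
Your proof is correct and follows essentially the same approach as the paper's: both arguments exploit the split/non-split/unipotent trichotomy, the single $G_1$-class of unipotents, and the fact that same-type elements can be conjugated into a common cyclic torus so that Lemma~\ref{lem.cyclic} applies. The only cosmetic difference is your handling of the case $g=2$, where you exhibit conjugate involutions directly via the two involution classes of $G_1$, whereas the paper invokes Lemma~\ref{lem.cyclic} once more inside the common torus; the two arguments produce the same witnesses.
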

\begin{proof}
If $\gcd(|A_1|,|A_2|)=1$ then any two non-trivial powers $A_1^i$ and
$A_2^j$ have distinct orders, thus $\Sigma(A_1) \cap \Sigma(A_2) =
\{1\}$, as required.

If $A_1$ is split and $A_2$ is non-split, then necessarily
$\gcd(|A_1|,|A_2|) \leq 2$, since $\gcd(q-1,q+1)=2$. In this case,
any non-trivial power of $A_1$ is a split element, while any
non-trivial power of $A_2$ is a non-split element, and so they are
not $G_1$-conjugate, implying that $\Sigma(A_1) \cap \Sigma(A_2) =
\{1\}$, as required.

If $\gcd(|A_1|,|A_2|)=2$ and both $A_1$ and $A_2$ are split
(respectively non-split) elements, then $A_1$ and $A_2$ are
$G_1$-conjugate to two elements $C_1$ and $C_2$ which belong to the
same cyclic group of order $q-1$ (respectively $q+1$).
Lemma~\ref{lem.cyclic} now implies that there exist some integers
$i$ and $j$ such that $A_1^{i}\neq 1$ and $A_2^{j}\neq 1$ are
$G_1$-conjugate, and so $\Sigma(A_1) \cap \Sigma(A_2) \neq \{1\}$.

If $|A_1|=|A_2|=p$, then $A_1$ and $A_2$ are unipotent, and so they
are $G_1$-conjugate, implying that $\Sigma(A_1) \cap \Sigma(A_2)
\neq \{1\}$.

Otherwise, $\gcd(|A_1|,|A_2|)=r$, where $2<r\ne p$, and so $r$
divides exactly one of $q-1$ or $q+1$, implying that $|A_1|$ and
$|A_2|$ both divide exactly one of $q-1$ or $q+1$. Hence, $A_1$ and
$A_2$ are $G_1$-conjugate to two elements $C_1$ and $C_2$ which
belong to the same cyclic group, either of order $q-1$ or of order
$q+1$. Lemma~\ref{lem.cyclic} implies again that there exist some
integers $i$ and $j$ such that $A_1^{i} \neq 1$ and $A_2^{j} \neq 1$
are $G_1$-conjugate, and so $\Sigma(A_1) \cap \Sigma(A_2) \neq
\{1\}$.
\end{proof}

\subsection{Proof of Theorem~\ref{thm.Beau.PSL2q}}\label{sect.proof.Beau.PSL2q}

\subsubsection*{The conditions are sufficient}
Let $\tau_1=(r_1,s_1,t_1)$ and $\tau_2=(r_2,s_2,t_2)$ be two
hyperbolic triples of integers. Assume that $G=\PSL_2(q)$ is a
quotient of the triangle groups $T_{r_1,s_1,t_1}$ and
$T_{r_2,s_2,t_2}$ with torsion-free kernel. Then one can find
elements $A_1,B_1,C_1,A_2,B_2,C_2$ in $G$ of orders
$r_1,s_1,t_1,r_2,s_2,t_2$ respectively, such that
$A_1B_1C_1=1=A_2B_2C_2$ and $\langle A_1,B_1 \rangle = G = \langle
A_2,B_2 \rangle$, and so conditions~{\it(i)} and~{\it(ii)} of
Definition~\ref{defn.beau} are fulfilled.

Moreover, the condition that $r_1s_1t_1$ is coprime to $r_2s_2t_2$
implies that each of $r_1,s_1,t_1$ is coprime to each of
$r_2,s_2,t_2$, and so by Lemma~\ref{lem.PSL.sigma},
$\Sigma(A_1,B_1,C_1) \cap \Sigma(A_2,B_2,C_2) = \{1\}$, hence
condition~{\it(iii)} of Definition~\ref{defn.beau} is fulfilled,
thus $\bigl((A_1,B_1,C_1),(A_2,B_2,C_2)\bigr)$ is an unmixed
Beauville structure of type $(\tau_1,\tau_2)$.

It is left to consider the case where $p$ is odd, $e$ is even,
$q=p^e>9$ and $\gcd(r_1s_1t_1,r_2s_2t_2) \in \{p,p^2\}$, which can
be reduced to the following three cases.

\begin{enumerate}
\item $r_1=r_2=p$ and $s_1,s_2,t_1,t_2 \neq p$.\\
Let $X$ be as in Proposition~\ref{prop.sigma.unip}, and denote
$A'_2=XA_2X^{-1}$, $B'_2=XB_2X^{-1}$ and $C'_2=XC_2X^{-1}$. Then
also $A'_2B'_2C'_2=1$ and $\langle A'_2,B'_2 \rangle = G$. By
Proposition~\ref{prop.sigma.unip},
$$\text{either } \Sigma(A_1) \cap \Sigma(A_2) = \{1\} \text{ or } \Sigma(A_1) \cap \Sigma(A'_2) = \{1\}.$$
Moreover, since $p$ is coprime to $s_2t_2$ then by Lemma~\ref{lem.PSL.sigma},
$$\Sigma(A_1) \cap \Sigma(B_2,C_2) = \{1\},\ \Sigma(A_1) \cap \Sigma(B'_2,C'_2) = \{1\}.$$
Similarly, since $s_1t_1$ is coprime to $ps_2t_2$, then by Lemma~\ref{lem.PSL.sigma},
$$\Sigma(B_1,C_1) \cap \Sigma(A_2,B_2,C_2) = \{1\},\ \Sigma(B_1,C_1) \cap \Sigma(A'_2,B'_2,C'_2) = \{1\}.$$
Therefore, either $\Sigma(A_1,B_1,C_1) \cap \Sigma(A_2,B_2,C_2) = \{1\}$
or $\Sigma(A_1,B_1,C_1) \cap \Sigma(A'_2,B'_2,C'_2) = \{1\}.$

\item $r_1=r_2=s_1=p$ and $s_2,t_1,t_2 \neq p$. \\
Let $(A'_2,B'_2,C'_2)$ be as in Case (1). By the assumption of the theorem,
in this case, $t_1$ is a \emph{good $G$-order}. Hence by Lemma~\ref{lem.good.unip.triple},
there exist $A_1,B_1,C_1 \in G$ of respective orders $p,p,t_1$ such
that $A_1$ is $G$-conjugate to $B_1$, $A_1B_1C_1=1$ and $\langle
A_1,B_1 \rangle = G$. By Proposition~\ref{prop.sigma.unip},
$$\text{either } \Sigma(A_1,B_1) \cap \Sigma(A_2) = \{1\}\
\text{or } \Sigma(A_1,B_1) \cap \Sigma(A'_2) = \{1\}.$$
Moreover, since $p$ is coprime to $s_2t_2$, then by Lemma~\ref{lem.PSL.sigma},
$$\Sigma(A_1,B_1) \cap \Sigma(B_2,C_2) = \{1\},\ \Sigma(A_1,B_1) \cap \Sigma(B'_2,C'_2) = \{1\}.$$
Similarly, since $t_1$ is coprime to $ps_2t_2$, then by Lemma~\ref{lem.PSL.sigma},
$$\Sigma(C_1) \cap \Sigma(A_2,B_2,C_2) = \{1\},\ \Sigma(C_1) \cap \Sigma(A'_2,B'_2,C'_2) = \{1\},$$
Therefore, either $\Sigma(A_1,B_1,C_1) \cap \Sigma(A_2,B_2,C_2) = \{1\}$
or $\Sigma(A_1,B_1,C_1) \cap \Sigma(A'_2,B'_2,C'_2) = \{1\}.$

\item $r_1=r_2=s_1=s_2=p$ and $t_1,t_2 \neq p$.\\
Let $(A_1,B_1,C_1)$ be as in Case (2). By the assumption of the theorem,
in this case, $t_1$ and $t_2$ are \emph{good $G$-orders}. Hence by Lemma~\ref{lem.good.unip.triple},
there exist $A_2,B_2,C_2 \in G$ of respective orders $p,p,t_2$ such
that $A_2$ is $G$-conjugate to $B_2$, $A_2B_2C_2=1$ and $\langle
A_2,B_2 \rangle = G$. Again, we denote $A'_2=XA_2X^{-1}$,
$B'_2=XB_2X^{-1}$ and $C'_2=XC_2X^{-1}$. Then also $A'_2B'_2C'_2=1$
and $\langle A'_2,B'_2 \rangle = G$. By Proposition~\ref{prop.sigma.unip},
$$\text{either } \Sigma(A_1,B_1) \cap \Sigma(A_2,B_2) = \{1\}\
\text{or } \Sigma(A_1,B_1) \cap \Sigma(A'_2,B'_2) = \{1\}.$$
Moreover, since $p$, $t_1$ and $t_2$ are pairwise coprime, then by Lemma~\ref{lem.PSL.sigma},
$$\Sigma(A_1,B_1) \cap \Sigma(C_2) = \{1\},\ \Sigma(A_1,B_1) \cap \Sigma(C'_2) = \{1\},$$
$$\Sigma(C_1) \cap \Sigma(A_2,B_2) = \{1\},\ \Sigma(C_1) \cap \Sigma(A'_2,B'_2) = \{1\},$$
$$\Sigma(C_1) \cap \Sigma(C_2) = \{1\},\ \Sigma(C_1) \cap \Sigma(C'_2) = \{1\}.$$
Therefore, either $\Sigma(A_1,B_1,C_1) \cap \Sigma(A_2,B_2,C_2) = \{1\}$
or $\Sigma(A_1,B_1,C_1) \cap \Sigma(A'_2,B'_2,C'_2) = \{1\}.$
\end{enumerate}

We conclude that in these three cases, either $\bigl((A_1,B_1,C_1),
(A_2,B_2,C_2)\bigr)$ or $\bigl((A_1,B_1,C_1)$,
$(A'_2,B'_2,C'_2)\bigr)$ is an unmixed Beauville structure of type
$(\tau_1,\tau_2)$.

\subsubsection*{The conditions are necessary}
Assume that the group $G=\PSL_2(q)$ admits an unmixed Beauville
structure of type $(\tau_1,\tau_2)$, where $\tau_i=(r_i,s_i,t_i)$
for $i=1,2$. Then there exist $A_1,B_1,C_1,A_2,B_2,C_2$ in $G$ of
orders $r_1,s_1,t_1,r_2,s_2,t_2$ respectively, such that
$A_1B_1C_1=1=A_2B_2C_2$ and $\langle A_1,B_1 \rangle = G = \langle
A_2,B_2 \rangle$, implying that $G$ is a quotient of the triangle
groups $T_{r_1,s_1,t_1}$ and $T_{r_2,s_2,t_2}$ with torsion-free
kernel, and so condition~{\it(i)} is necessary.

Moreover, $\Sigma(A_1,B_1,C_1) \cap \Sigma(A_2,B_2,C_2) = \{1\}$,
and so by Lemma~\ref{lem.PSL.sigma}, if $p=2$ or $e$ is odd, then
each of $r_1,s_1,t_1$ is necessarily coprime to each of
$r_2,s_2,t_2$, implying that $r_1s_1t_1$ is coprime to $r_2s_2t_2$.

If $p$ is odd and $e$ is even then, by Lemma~\ref{lem.PSL.sigma},
$\gcd(r_1,r_2)=1$ or $p$, $\gcd(r_1,s_2)=1$ or $p$,
$\gcd(r_1,t_2)=1$ or $p$, $\gcd(s_1,r_2)=1$ or $p$,
$\gcd(s_1,s_2)=1$ or $p$, $\gcd(s_1,t_2)=1$ or $p$,
$\gcd(t_1,r_2)=1$ or $p$, $\gcd(t_1,s_2)=1$ or $p$, and
$\gcd(t_1,t_2)=1$ or $p$. Moreover, it is not possible that
$r_1=s_1=t_1=p$ (respectively $r_2=s_2=t_2=p$), since in this case
$e=1$, by Theorem~\ref{thm.tri.PSL2q}. Thus, $g=\gcd(r_1s_1t_1,
r_2s_2t_2) \in \{1,p,p^2\}$.

If moreover, $q=p^e>9$, $p$ divides $g$ and $\tau_i = (p,p,t_i)$ ($i
\in \{1,2\}$) then $t_i \neq p$ and the condition that
$\Sigma(A_1,B_1,C_1) \cap \Sigma(A_2,B_2,C_2) = \{1\}$ implies that
$A_i$ is $G$-conjugate to $B_i$, by
Proposition~\ref{prop.sigma.unip}. We now deduce from
Corollary~\ref{corr.G.unip.triple} that $t_i$ is a \emph{good
$G$-order}.

If $q=9$ then it follows from a careful observation of the possible
$G$-triples in Table~\ref{table.Ex.triples.5.7.8.9} and
Remark~\ref{rem.9.335} that necessarily $g=1$.

\medskip

In fact, if $p$ is odd, $e$ is even and $q=p^e>9$, then $\PSL_2(q)$
\emph{always} admits unmixed Beauville structures of type
$\bigl((p,p,t_1),(p,p,t_2)\bigr)$ for certain $t_1,t_2$. In the
following lemma we explicitly construct such a structure.

\begin{lemma}\label{lem.ppt}
Let $3 < q = p^e$ for some odd prime number $p$ and some positive
integer $e$. Then $\PSL_2(q^2)$ admits an unmixed Beauville
structure of type $\bigl( (p,p,t_1),(p,p,t_2)\bigr)$ for certain
$t_1$ dividing $(q^2-1)/2$ and $t_2$ dividing $(q^2+1)/2$.
\end{lemma}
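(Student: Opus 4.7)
The plan is to take $t_1 := (q^2-1)/4$ and $t_2 := (q^2+1)/2$, both positive integers since $q$ odd forces $q^2 \equiv 1 \pmod 8$ (giving $t_1 \geq 6$, $t_2 \geq 5$ for $q \geq 5$, each coprime to $p$). First I would verify that $t_1$ and $t_2$ are good $G$-orders for $G = \PSL_2(q^2)$ in the sense of Definition~\ref{defn.good.G.order}. For $t_1$, a primitive root of unity $a$ of order $2t_1 = (q^2-1)/2$ in $\mathbb{F}_{q^2}$ satisfies $a^{(q^2-1)/2} = 1$, while $(-1)^{(q^2-1)/2} = 1$ because $(q^2-1)/2$ is even; hence $(-a)^{(q^2-1)/2}=1$ and $-a$ is a square in $\mathbb{F}_{q^2}^*$. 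For $t_2$, a primitive root of unity $a$ of order $2t_2 = q^2+1$ in $\mathbb{F}_{q^4}$ satisfies $a^{(q^2+1)/2} = -1$, and $(-1)^{(q^2+1)/2} = -1$ since $(q^2+1)/2$ is odd; thus $(-a)^{(q^2+1)/2} = 1$, exhibiting $-a$ as a square inside the cyclic subgroup of order $q^2+1$ of $\mathbb{F}_{q^4}^*$.

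Second, I would confirm $\mu_{\PSL}(p; t_1) = \mu_{\PSL}(p; t_2) = 2e$. The upper bound is immediate: $p^{2e} = q^2 \equiv 1 \pmod{2t_1}$ since $2t_1 \mid q^2-1$, and $p^{2e} \equiv -1 \pmod{2t_2}$ since $2t_2 = q^2+1$. For $0 < f < 2e$ a direct size estimate — using $q > 3$ and $f \leq 2e-1$ to get $p^f \pm 1 \leq p^{2e-1}+1 < (q^2-1)/2$ as well as $p^f+1 < q^2+1$ — rules out $p^f \equiv \pm 1$ modulo either $2t_1$ or $2t_2$. Consequently, Lemma~\ref{lem.good.unip.triple} supplies $G$-triples $(A_i, B_i, C_i)$ of type $(p, p, t_i)$ for $i=1,2$ such that $\langle A_i, B_i \rangle = G$ and $A_i$ is $G$-conjugate to $B_i$.

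Finally, I would assemble the Beauville structure by checking condition~\textit{(iii)} of Definition~\ref{defn.beau}. Since $t_2$ is odd and $\gcd(q^2-1, q^2+1) = 2$, the orders $t_1$ and $t_2$ are coprime, and both are coprime to $p$. By Lemma~\ref{lem.PSL.sigma}~(a) every cross-intersection $\Sigma(U) \cap \Sigma(V)$ in which at least one of $U, V$ is a $C_i$ is trivial, so the only delicate piece is the unipotent block $\Sigma(A_1, B_1) \cap \Sigma(A_2, B_2)$. Here Remark~\ref{rem.sigma.unip} applies: because $A_i$ is $G$-conjugate to $B_i$ for both $i$, at least one of $\Sigma(A_1, B_1) \cap \Sigma(A_2, B_2)$ or $\Sigma(A_1, B_1) \cap \Sigma(X A_2 X^{-1}, X B_2 X^{-1})$ is $\{1\}$, so after possibly replacing the second triple by its $X$-conjugate we obtain the required unmixed Beauville structure of type $\bigl((p,p,t_1),(p,p,t_2)\bigr)$. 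The main obstacle is identifying the right choice of $t_i$ — namely realizing that the maximal split divisor $(q^2-1)/2$ fails to be good but $(q^2-1)/4$ works, while every non-split divisor is automatically good; once this is noticed, all that remains is bookkeeping built on the machinery of Section~\ref{sect.surj}.
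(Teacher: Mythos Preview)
Your proof is correct and takes a genuinely different route from the paper. The paper proceeds by explicit matrix construction: it introduces the set $\Dq_q = \{a^2 - 4 : a \in \mathbb{F}_{q^2},\ a^2 \in \mathbb{F}_{q^2} \setminus \mathbb{F}_q\}$, counts via inclusion--exclusion that $\Dq_q$ contains both squares and non-squares of $\mathbb{F}_{q^2}$, and from a pair of such elements writes down concrete unipotents $A_i$, their conjugates $B_i = g_i A_i g_i^{-1}$, and products $C_i$ whose traces lie in $\mathbb{F}_{q^2} \setminus \mathbb{F}_q$ (forcing $\langle A_i, B_i \rangle = \PSL_2(q^2)$) and whose discriminants are respectively a square and a non-square (so $C_1$ is split and $C_2$ non-split). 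You instead work entirely through the abstract machinery of \S\ref{sect.unip}: you pick the specific values $t_1 = (q^2-1)/4$ and $t_2 = (q^2+1)/2$, verify directly that each is a good $G$-order and that $\mu_{\PSL}(p;t_i) = 2e$, and then invoke Lemma~\ref{lem.good.unip.triple} and Remark~\ref{rem.sigma.unip}. Your approach has the virtue of naming explicit $t_i$ and demonstrating that the lemma follows formally from the good-$G$-order framework already in place, with no fresh matrix computation; the paper's argument is more self-contained and hands-on but does not identify $t_1, t_2$ explicitly. One small slip: since $t_2 = (q^2+1)/2$ is odd, the modulus in the definition of $\mu_{\PSL}(p,t_2)$ is $t_2$ rather than $2t_2$; however your stronger bound $p^f \pm 1 \leq p^{2e-1}+1 < (q^2-1)/2 < t_2$ already rules out $p^f \equiv \pm 1 \pmod{t_2}$, so the conclusion stands.
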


\begin{proof}
As $q>3$, the following Remark~\ref{rem.sets.Fq} shows that there exist
some $b,c \in \mathbb{F}_{q^2}$ such that $b^2,c^2 \in \mathbb{F}_{q^2} \setminus
\mathbb{F}_q$, $c^2-4$ is a square in $\mathbb{F}_{q^2}$ and $b^2-4$
is a non-square in $\mathbb{F}_{q^2}$. Let $x$ be a generator of the
multiplicative group $\mathbb{F}_{q^2}^*$ and set $d=b/x$.

Define the following matrices
\begin{align*}
   A_1 = \begin{pmatrix} 1 & 1 \\ 0 & 1
   \end{pmatrix}, &\quad
   A_2 = \begin{pmatrix} 1 & x \\ 0 & 1
   \end{pmatrix}, \\
   g_1 = \begin{pmatrix} 1 & 0 \\ c & 1
   \end{pmatrix}, &\quad
   g_2 = \begin{pmatrix} 1 & 0 \\ d & 1
   \end{pmatrix}, \\
   B_1 = g_1A_1g_1^{-1}= \begin{pmatrix} -c + 1 & 1 \\ -c^2 & c + 1
   \end{pmatrix}, &\quad
   B_2 = g_2A_2g_2^{-1}= \begin{pmatrix} -dx + 1 & x \\ -d^2x & dx+1
   \end{pmatrix}, \\
   C_1 = (A_1B_1)^{-1}= \begin{pmatrix} c + 1 & -c - 2 \\ c^2 & -c^2 - c
   + 1 \end{pmatrix}, &\quad
   C_2 = (A_2B_2)^{-1}= \begin{pmatrix} dx + 1 & -dx^2 - 2x \\ d^2x & -d^2x^2 - dx + 1
   \end{pmatrix}.
\end{align*}

In this case, $|\bar{A_1}|=|\bar{A_2}|=|\bar{B_1}|=|\bar{B_2}|=p$,
and we denote by $t_1$ and $t_2$ respectively the orders of
$\bar{C_1}$ and $\bar{C_2}$. Moreover, $\bar{A_1}$ and $\bar{B_1}$ are conjugate in $\PSL_2(q^2)$,
$\bar{A_2}$ and $\bar{B_2}$ are conjugate in $\PSL_2(q^2)$, whereas
$\bar{A_1}$ and $\bar{A}_2$ are not conjugate in $\PSL_2(q^2)$ (see Section~\ref{sect.unip}).
Now, one needs to verify that
$\bigl((\bar{A_1},\bar{B_1},\bar{C_1}),(\bar{A_2},\bar{B_2},\bar{C_2})\bigr)$
is an unmixed Beauville structure for $\PSL_2(q^2)$.

\begin{enumerate}\renewcommand{\theenumi}{\it \roman{enumi}}
\item By the construction, $\bar{A_1}\bar{B_1}\bar{C_1} = 1 =
\bar{A_2}\bar{B_2}\bar{C_2}$.

\item Observe that $\tr(C_1) =2-c^2$ and $\tr(C_2) =2-d^2x^2 = 2-b^2$ both belong to
$\mathbb{F}_{q^2}\setminus \mathbb{F}_q$, as $c^2$ and $b^2$ both
belong to $\mathbb{F}_{q^2}\setminus \mathbb{F}_q$. Hence, neither
$\bar{C_1}$ nor $\bar{C_2}$ is conjugate to some element of
$\PSL_2(q)$. We now use the methodology described in
Section~\ref{sect.Mac}. Let $i \in \{1,2\}$. Since $\tr(A_i)=
\tr(B_i)=2$ but $\tr(C_i) \ne \pm 2$, the triple $(A_i,B_i,C_i)$ is
not singular, implying that $\langle \bar{A}_i, \bar{B}_i \rangle$
is not a structural subgroup, by~\cite[Theorem 2]{Mac}.

As $t_i$ is not an order of an element in $\PSL_2(q)$, it follows
from Table~\ref{table.Ex.triples.5.7.8.9} that if $q^2>9$, then
either $p>5$; or $p=5$ and $t_i \ne 2,3,5$; or $p=3$ and $t_i>5$.
Therefore, $\langle \bar{A}_i,\bar{B}_i \rangle$ cannot be a small
subgroup. In addition, $(\bar{A}_i,\bar{B}_i,\bar{C}_i)$ is not an
irregular $\PSL_2(q^2)$-triple. Therefore, $\langle
\bar{A}_i,\bar{B}_i \rangle = \PSL_2(q^2)$ for $i \in \{1,2\}$.

\item The characteristic polynomial of $C_1$ is $\la^2-(2-c^2)+1$,
and its discriminant equals $c^2(c^2-4)$, which is a square in
$\mathbb{F}_{q^2}$, thus $\bar{C_1}$ is split and so $t_1$ divides
$(q^2-1)/2$. Similarly, the characteristic polynomial of $C_2$ is
$\la^2-(2-b^2)+1$, and its discriminant equals $b^2(b^2-4)$, which
is a non-square in $\mathbb{F}_{q^2}$, thus $\bar{C_2}$ is non-split
and so $t_2$ divides $(q^2+1)/2$. By Lemma~\ref{lem.PSL.sigma},
$\Sigma(\bar{A_1},\bar{B_1},\bar{C_1}) \cap
\Sigma(\bar{A_2},\bar{B_2},\bar{C_2}) = \{1\}$, since $t_1$ and
$t_2$ are coprime, $\bar{A_1}$ and $\bar{A_2}$ are not conjugate
in $\PSL_2(q^2)$, and so also $\bar{B_1}$ and $\bar{B_2}$ are not conjugate
in $\PSL_2(q^2)$.
\end{enumerate}
\end{proof}


\begin{rem}\label{rem.sets.Fq}
Recall that if $q$ is odd then an element in $\PSL_2(q)$ is
non-split if and only if the characteristic polynomial
$P(\la):=\la^2 - \al \la +1$ of its pre-image $A \in \SL_2(q)$
(where $\al=\tr(A)$) has no distinct roots in $\mathbb{F}_q$, or
equivalently, the discriminant $\al^2-4$ is a non-square in
$\mathbb{F}_q$. Thus, by~\cite[Lemma 2]{Mac}, $\#\{b \in
\mathbb{F}_{q}: b^2-4 \text{ is a non-square} \} = (q-1)/2$ and
$\#\{c \in \mathbb{F}_{q}: c^2-4 \text{ is a square} \} = (q+1)/2$.

Therefore, $\#\{c \in \mathbb{F}_{q^2}: c^2-4 \text{ is a square} \}
= (q^2+1)/2$, and $\#\{b \in \mathbb{F}_{q^2}: b^2-4 \text{ is a
non-square} \} = (q^2-1)/2$. In addition, $\#\{c \in
\mathbb{F}_{q^2}: c^2 \in \mathbb{F}_q\}=2q-1$, and if $c^2 \in
\mathbb{F}_q$ then also $c^2-4 \in \mathbb{F}_q$ is a square in
$\mathbb{F}_{q^2}$. Hence,
$$\#\{c \in \mathbb{F}_{q^2}: c^2 \notin \mathbb{F}_q,\
c^2-4 \text{ is a square} \} = (q^2+1)/2 -(2q-1) = (q^2-4q+3)/2,$$
and
$$\#\{b \in \mathbb{F}_{q^2}: b^2 \notin \mathbb{F}_q,\
b^2-4 \text{ is a non-square} \} = (q^2-1)/2.$$
\end{rem}

\subsection{Proof of Theorem~\ref{thm.Beau.PGL2q}}\label{sect.proof.Beau.PGL2q}
\subsubsection*{The conditions are necessary}
Assume that the group $G_1=\PGL_2(q)$ admits an unmixed Beauville
structure of type $\bigr( (r_1,s_1,t_1),(r_2,t_2,s_2) \bigl)$. Then
there exist $A_1,B_1,C_1,A_2,B_2,C_2$ in $G_1$ of orders
$r_1,s_1,t_1,r_2,s_2,t_2$ respectively, such that
$A_1B_1C_1=1=A_2B_2C_2$ and $\langle A_1,B_1 \rangle = G_1 = \langle
A_2,B_2 \rangle$, implying that $G_1$ is a quotient of the triangle
groups $T_{r_1,s_1,t_1}$ and $T_{r_2,s_2,t_2}$ with torsion-free
kernel, and so condition~{\it(i)} is necessary.

Therefore, we may assume that $(r_1,s_1,t_1)$ and $(r_2,s_2,t_2)$
are hyperbolic triples, and moreover they are \emph{irregular w.r.t
$q^2$} (see \S\ref{sect.Mac}).

If, for example, $\gcd(r_1,r_2)>2$, then Lemma~\ref{lem.PGL.sigma}
implies that $\Sigma(A_1) \cap \Sigma(A_2)$ is non-trivial,
contradicting $\Sigma(A_1,B_1,C_1) \cap \Sigma(A_2,B_2,C_2) =
\{1\}$. Hence, condition~{\it(ii)} is necessary.

Since $(r_1,s_1,t_1)$ and $(r_2,s_2,t_2)$ are hyperbolic and
irregular, both of them must contain at least two even
integers, one of which is greater than $2$. Hence, we may assume
that $r_1,r_2$ are even and that $r_1,r_2 > 2$. If both $r_1,r_2$
divide $q-1$ (respectively $q+1$) then both $A_1,A_2$ are split
(respectively non-split) and by Lemma~\ref{lem.PGL.sigma},
$\Sigma(A_1) \cap \Sigma(A_2) \neq \{1\}$, yielding a contradiction.

Hence, we may assume that $r_1$ divides $q-1$ and $r_2$ divides
$q+1$, and so $A_1$ is split and $A_2$ is non-split. If $s_1$
(respectively $t_1$) is even and does not divide $q-1$, then it is
necessarily an even integer greater than $2$, thus it must divide
$q+1$, and so $B_1$ (respectively $C_1$) is non-split.
Lemma~\ref{lem.PGL.sigma} implies again that $\Sigma(B_1) \cap
\Sigma(A_2) \neq \{1\}$ (respectively $\Sigma(C_1) \cap \Sigma(A_2)
\neq \{1\}$), yielding a contradiction. Similarly, if $s_2$
(respectively $t_2$) is even, then it necessarily divides $q+1$.
Hence, condition~{\it(iii)} is necessary.

Moreover, if $C_1$ (respectively $C_2$) has order $2$, then the
above argument shows that it is necessarily split (respectively
non-split). By Corollary~\ref{cor.2.triple}, $(r_1,s_1,2)$
(respectively $(r_2,s_2,2)$) is a \emph{good involuting triple w.r.t
$q$} (see Definition~\ref{def.2.triple}), implying that
condition~{\it(iv)} is necessary.

\subsubsection*{The conditions are sufficient}
Let $(r_1,s_1,t_1)$ and $(r_2,s_2,t_2)$ be two hyperbolic triples of
integers. Assume that $G_1=\PGL_2(q)$ is a quotient of the triangle
groups $T_{r_1,s_1,t_1}$ and $T_{r_2,s_2,t_2}$ with torsion-free
kernel. Then one can find elements $A_1,B_1,C_1,A_2,B_2,C_2$ in
$G_1$ of orders $r_1,s_1,t_1,r_2,s_2,t_2$ respectively, such that
$A_1B_1C_1=1=A_2B_2C_2$ and $\langle A_1,B_1 \rangle = G_1 = \langle
A_2,B_2 \rangle$, and so conditions~{\it(i)} and~{\it(ii)} of
Definition~\ref{defn.beau} are fulfilled.

We may assume that $A_1,A_2,B_1,B_2 \in G_1 \setminus G$ and that
$C_1,C_2 \in G$. Hence $r_1,r_2,s_1,s_2$ are even. Moreover, by
Theorem~\ref{thm.tri.PGL2q}, $(r_1,s_1,t_1)$ and $(r_2,s_2,t_2)$ are
\emph{irregular w.r.t $q^2$}.

The condition that $\gcd(r_1,r_2) \leq 2$ now implies that one of
$r_1,r_2$ divides $q-1$ and the other divides $q+1$. We may assume
that $r_1 \mid q-1$ and $r_2 \mid q+1$, and so $A_1$ is split and
$A_2$ is non-split. Lemma~\ref{lem.PGL.sigma} now implies that
$\Sigma(A_1) \cap \Sigma(A_2) = \{1\}$.

If $s_1>2$, then the condition that $s_1 \mid q-1$ implies that
$B_1$ is split. If $s_1=2$ then $(r_1,2,t_1)$ is a \emph{good
involuting triple w.r.t $q$} and so by Corollary~\ref{cor.2.triple},
$B_1$ is split. Lemma~\ref{lem.PGL.sigma} implies again that
$\Sigma(B_1) \cap \Sigma(A_2) = \{1\}$.

Similarly, if $s_2>2$, then the condition that $s_2 \mid q+1$
implies that $B_2$ is non-split. If $s_2=2$ then $(r_2,2,t_2)$ is a
\emph{good involuting triple w.r.t $q$} and so by
Corollary~\ref{cor.2.triple}, $B_2$ is non-split.
Lemma~\ref{lem.PGL.sigma} implies again that $\Sigma(A_1) \cap
\Sigma(B_2) = \{1\}$ and $\Sigma(B_1) \cap \Sigma(B_2) = \{1\}$.

If $t_1>2$ is even, then the condition that $t_1 \mid q-1$ implies
that $C_1$ is split, If $t_1=2$ then $(r_1,s_1,2)$ is a \emph{good
involuting triple w.r.t $q$} and so by Corollary~\ref{cor.2.triple},
$C_1$ is split. Lemma~\ref{lem.PGL.sigma} implies again that
$\Sigma(C_1) \cap \Sigma(A_2) = \{1\}$ and $\Sigma(C_1) \cap
\Sigma(B_2) = \{1\}$. If $t_1$ is odd, then necessarily
$\gcd(t_1,r_2)=1$ and $\gcd(t_1,s_2)=1$, and
Lemma~\ref{lem.PGL.sigma} implies that $\Sigma(C_1) \cap \Sigma(A_2)
= \{1\}$ and $\Sigma(C_1) \cap \Sigma(B_2) = \{1\}$.

Similarly, if $t_2>2$ is even, then the condition that $t_2 \mid
q+1$ implies that $C_2$ is non-split, and if $t_2=2$ then
$(r_2,s_2,2)$ is a \emph{good involuting triple w.r.t $q$} and so by
Corollary~\ref{cor.2.triple}, $C_2$ is non-split.
Lemma~\ref{lem.PGL.sigma} implies again that $\Sigma(A_1) \cap
\Sigma(C_2) = \{1\}$ and $\Sigma(B_1) \cap \Sigma(C_2) = \{1\}$. If
$t_2$ is odd, then necessarily $\gcd(r_1,t_2)=1$ and
$\gcd(s_1,t_2)=1$, and Lemma~\ref{lem.PGL.sigma} implies that
$\Sigma(A_1) \cap \Sigma(C_2) = \{1\}$ and $\Sigma(B_1) \cap
\Sigma(C_2) = \{1\}$. Moreover, either $\gcd(t_1,t_2)=1$, or
$\gcd(t_1,t_2)=2$ and $C_1$ is split while $C_2$ is non-split, and
so, by Lemma~\ref{lem.PGL.sigma}, $\Sigma(C_1) \cap \Sigma(C_2) =
\{1\}$.

To conclude, $\Sigma(A_1,B_1,C_1) \cap \Sigma(A_2,B_2,C_2) = \{1\}$,
hence condition~{\it(iii)} of Definition~\ref{defn.beau} is
fulfilled.


\end{document}